%
%


\documentclass[11pt]{amsart}


\usepackage{graphicx}
\usepackage{latexsym}    
\usepackage{amssymb}   
\usepackage{amsmath}    
\usepackage{amsbsy}
\usepackage{amsthm}
\usepackage{amsgen}
\usepackage{amsfonts}
\usepackage{array}
\usepackage[all]{xy}    
\usepackage{tikz}
\usetikzlibrary{decorations.pathmorphing}
\usepackage{hyperref}
\usepackage{color}
\usepackage{verbatim}
\usepackage{url}
\usepackage{enumerate}
\usepackage{enumitem}

\tikzset{snake it/.style={decorate, decoration=snake}}



\usepackage[normalem]{ulem}



\newtheorem{thm}{Theorem}[section]
\newtheorem{cor}[thm]{Corollary}
\newtheorem{lem}[thm]{Lemma}
\newtheorem{prop}[thm]{Proposition}


\newtheorem{lemma}[thm]{Lemma}


\newtheorem{theorem}{Theorem}

\newtheorem{corollary}[theorem]{Corollary}


\theoremstyle{definition}
\newtheorem{defn}[thm]{Definition}

\newtheorem*{conj*}{Conjecture}
\newtheorem{example}[thm]{Example}
\newtheorem{rem}[thm]{Remark}

\newtheorem*{defn*}{Definition}

\newtheorem*{ack}{Acknowledgements}


\numberwithin{equation}{section}




\newcommand{\RR}{\mathbb{R}}
\newcommand{\CC}{\mathbb{C}}

\newcommand{\reg}{\textrm{reg}}

\DeclareMathOperator{\diag}{diag}

\DeclareMathOperator{\rk}{rk}
\DeclareMathOperator{\Ker}{Ker}

\DeclareMathOperator{\Span}{span}

\newcommand{\horb}{\hat{\mathcal{O}}}
\newcommand{\hP}{\hat{P}}
\newcommand{\hL}{\hat{L}}
\newcommand{\hM}{\hat{M}}

\newcommand{\CP}{\mathbb{C}P}

\newcommand{\sph}{\mathrm{\mathbb{S}}}

\newcommand{\ZZ}{\mathbb{Z}}

\newcommand{\SO}{\mathrm{SO}}
\newcommand{\SU}{\mathrm{SU}}
\newcommand{\U}{\mathrm{U}}

\newcommand{\Hom}{\mathrm{Hom}}

\newcommand{\orb}{\mathcal{O}}
\newcommand{\odd}{\mathrm{odd}}
\newcommand{\Q}{\mathbb{Q}}

\def\ol{\overline}
\def\ul{\underline}

\def\x{\times}

\def\lra{\longrightarrow}
\def\lmt{\longmapsto}

\def\In{\subseteq}
\def\Ni{\supseteq}

\def\CC{\mathbb{C}}

\def\QQ{\mathbb{Q}}
\def\RR{\mathbb{R}}
\def\ZZ{\mathbb{Z}}

\def\mc{\mathcal}
\def\mf{\mathfrak}

\def\vphi{\varphi}

\def\g{\mf{g}}

\def\id{\mathrm{id}}

\setcounter{tocdepth}{1}


\begin{document}



\title[Torus orbifolds]{Torus orbifolds, slice-maximal torus actions and rational ellipticity}



\author[F.~Galaz-Garc\'ia]{F.~Galaz-Garc\'ia$^*$}
\address[Galaz-Garc\'ia]{
Institut f\"ur Algebra und Geometrie, Karlsruher Institut f\"ur Technologie (KIT), Germany.}
\email{galazgarcia@kit.edu}
\thanks{$^*$ Received support from SFB 878: \emph{Groups, Geometry \& Actions} at WWU M\"unster.}


\author[M.~Kerin]{M.~Kerin$^*$}
\address[Kerin]{Mathematisches Institut, WWU M\"unster, Germany.}
\email{m.kerin@math.uni-muenster.de}


\author[M.~Radeschi]{M.~Radeschi$^*$}
\address[Radeschi]{Mathematisches Institut, WWU M\"unster, Germany.}
\email{mrade\_02@uni-muenster.de}
\thanks{}


\author[M.~Wiemeler]{M.~Wiemeler$^\dagger$}
\address[Wiemeler]{Institut f\"ur Mathematik, Universit\"at Augsburg, Germany.}
\email{michael.wiemeler@math.uni-augsburg.de}
\thanks{$\dagger$ Partially supported by DFG Grant HA 3160/6-1.}

\date{\today}


\subjclass[2010]{55P62,57S10}
\keywords{torus, orbifold, rationally, elliptic, non-negative curvature}


\begin{abstract}
In this work, it is shown that a simply-connected, rationally-elliptic torus orbifold is equivariantly rationally homotopy equivalent to the quotient of a product of spheres by an almost-free, linear torus action, where this torus has rank equal to the number of odd-dimensional spherical factors in the product.  As an application, simply-connected, rationally-elliptic manifolds admitting slice-maximal torus actions are classified up to equivariant rational homotopy.  The case where the rational-ellipticity hypothesis  is replaced by non-negative curvature is also discussed, and the Bott Conjecture in the presence of a slice-maximal torus action is proved.
 \end{abstract}

\maketitle




\section{Introduction}


A \emph{torus manifold} is a  $2n$-dimensional, closed, orientable, smooth manifold equipped with a smooth, effective $n$-torus action which has non-empty fixed-point set. Such spaces have been of long-standing interest, going back, on the one hand, to Orlik and Raymond's work on closed, smooth $4$-manifolds equipped with smooth, effective $T^2$ actions \cite{OR1,OR2} and, on the other hand, to the study of toric varieties in algebraic geometry \cite{Fu}. Many results on  manifolds with torus actions admit generalizations  to orbifolds (see, for example, \cite{HaSa} for smooth torus actions on orbifolds,  \cite{HoMa,LeTo} for Hamiltonian torus actions on symplectic orbifolds or \cite{PoSa} for quasitoric orbifolds).

Recently, it has been shown in \cite{Wi} that, if $M$ is simply connected and either a rationally-elliptic torus manifold with torsion-free integer cohomology or a torus manifold with non-negative sectional curvature, then $M$ is homeomorphic to the quotient of a product of spheres by a free, linear torus action. In this paper, \emph{torus orbifolds} are investigated, and a similar result to that in \cite{Wi} is proven.

Recall that a simply-connected topological space $X$ is called \emph{rationally elliptic} if it satisfies $\dim_\Q H^*(X;\Q) < \infty$ and $\dim_\Q (\pi_*(X)\otimes \Q) < \infty$.  Two spaces $X$ and $Y$ are \emph{rationally homotopy equivalent} if their corresponding minimal models are isomorphic. Given a torus $T$,
a rational homotopy equivalence between $T$-spaces $X$ and $Y$ is \emph{$T$-equivariant} if the corresponding Borel constructions $X_T$ and $Y_T$ are also rationally homotopy equivalent and there exists a commutative diagram
\[
\xymatrix{
H^*(Y;\QQ) \ar[r]& H^*(X;\QQ) \\
H^*_{T}(Y;\QQ) \ar[u]^{} \ar[r]&  H^*_{T}(X;\QQ)\ar[u]^{}
}
\]
where the horizontal arrows are isomorphisms induced by the respective rational homotopy equivalences.


\begin{theorem}
\label{T:main_thm} 
Let $(\orb,T)$ be a rationally-elliptic, simply-connected torus orbifold.
Then there is a product $\hP$ of spheres of dimension $\geq 3$, a torus $\hat{L}$ acting linearly and almost freely on $\hP$, and an effective, linear action of $T$ on  $\hat{\orb}=\hP/\hat{L}$, such that there is a $T$-equivariant rational homotopy equivalence $\orb\simeq_{\QQ}\hat{\orb}$.

Moreover, if $\orb$ is a manifold, then $\hat{L}$ acts freely on $\hP$ and thus $\hat{\orb}$ is a manifold as well.
\end{theorem}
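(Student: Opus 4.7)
The plan is to extract combinatorial characteristic data from the orbit space $Q := \orb/T$, use it to construct a candidate product $\hP$ of spheres together with an almost-free linear torus action of $\hL$, and then match the resulting Borel constructions via equivariant formality. The first step is to establish equivariant formality for $\orb$: the existence of a $T$-fixed point (part of the definition of a torus orbifold) combined with rational ellipticity forces $\chi(\orb) > 0$, from which classical results on elliptic spaces with positive Euler characteristic give $H^{\odd}(\orb;\QQ) = 0$ and make $H^*_T(\orb;\QQ)$ a free module over $H^*(BT;\QQ)$ whose reduction modulo the augmentation ideal recovers $H^*(\orb;\QQ)$.

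Next, I would analyze the stratification of $Q$ by orbit type. Its codimension-one strata $F_1,\ldots,F_k$ each carry a \emph{characteristic circle} $\lambda_i \subset T$, namely the identity component of the isotropy along $\pi^{-1}(\mathrm{int}\,F_i)$. The preimages $\orb_i := \pi^{-1}(F_i)$ are themselves simply-connected, rationally-elliptic torus orbifolds on which $T/\lambda_i$ acts effectively, which enables an induction on dimension. Combined with equivariant formality, this induction should yield that $Q$ is rationally a product of simplices with a combinatorially determined face poset. From that data, each simplex factor is assigned a sphere whose dimension depends on whether the characteristic circles meeting at a vertex of that factor span $T$ entirely (producing an even-dimensional sphere) or leave a one-dimensional complement (producing an odd-dimensional sphere together with an extra $S^1$-summand in $\hL$). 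This packages into the product $\hP$, a linear action of a larger torus $\hat{T}$ on $\hP$, and a subtorus $\hL \subset \hat{T}$ with $T = \hat{T}/\hL$; by construction, $\rank \hL$ equals the number of odd-dimensional spherical factors.

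To obtain the rational equivalence, note that both $\orb$ and $\horb = \hP/\hL$ are equivariantly formal over $T$, and their equivariant cohomology rings each admit a Stanley--Reisner-type description determined by the face poset of $Q$ together with the characteristic circles. By construction these descriptions agree, giving an isomorphism $H^*_T(\horb;\QQ) \cong H^*_T(\orb;\QQ)$, which via Sullivan's minimal model machinery for formal $T$-spaces lifts to a $T$-equivariant rational homotopy equivalence $\orb \simeq_{\QQ} \horb$. For the manifold case, $\orb$ being free of orbifold singularities is equivalent to the characteristic circles at each vertex of $Q$ forming a $\ZZ$-basis of the cocharacter lattice of $T$; this integral unimodularity forces $\hL$ to act freely on $\hP$, so $\horb$ is a manifold. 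The principal obstacle is the orbit-space step, namely translating rational ellipticity into the conclusion that $Q$ is rationally a product of simplices with the predicted characteristic data. This combinatorial rigidity is the technical heart of the argument and presumably relies on the structure-theoretic results developed earlier in the paper.
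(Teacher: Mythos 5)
Your overall strategy is the same as the paper's (reduce to $H^{\odd}(\orb;\QQ)=0$ and equivariant formality, read off characteristic/label data on the facets of $Q=\orb/T$, rebuild a model as a quotient of a moment-angle-type space by the kernel of the label map, and compare equivariant cohomologies using formality), but the step you yourself flag as ``the technical heart'' is exactly the content of the paper's proof, and you do not supply it. The paper derives, from rational ellipticity, a precise list of combinatorial constraints on the face poset of $Q$ (Proposition~\ref{P:comb-properties}): beyond the nice-manifold-with-corners structure and the two-vertices-per-edge property, the essential inputs are that every two-dimensional face has at most four vertices and that no face of dimension $d\geq 3$ is combinatorially $[-1,1]^d/\{\pm\id\}$; these are proved by cohomological arguments ([AP, Cor.~3.3.11] applied to preimages of faces, and an explicit rational-homotopy computation ruling out $(\sph^2)^d/\ZZ_2$), and only then does the appeal to the argument of \cite[Prop.~4.5]{Wi} identify the face poset with that of $\prod_i\Delta^{n_i}\times\prod_j\Sigma^{n_j}$. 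Writing ``presumably relies on the structure-theoretic results developed earlier in the paper'' leaves the main mathematical work unproven, so as it stands the proposal has a genuine gap rather than an alternative argument.

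Two further points would need repair even if the rigidity step were granted. First, the classification is not ``$Q$ is rationally a product of simplices'': the even-dimensional sphere factors come from \emph{suspensions of simplices} $\Sigma^{n_j}$, which are combinatorially distinct from simplices, and the even/odd dichotomy is read off from the combinatorial type of each factor (a $\Delta^{n_i}$ factor contributes $\sph^{2n_i+1}$ and one circle to $\hL$; a $\Sigma^{n_j}$ factor contributes $\sph^{2n_j}$ and nothing to $\hL$). Your proposed criterion --- whether the characteristic circles at a vertex of a factor ``span $T$ entirely or leave a one-dimensional complement'' --- is not well defined: at every vertex of $Q$ the isotropy is all of $T$, so the circles of the $n$ facets through any vertex always span $T$. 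Second, your induction assumes the preimages of facets are \emph{simply-connected} rationally-elliptic torus orbifolds; the paper neither proves nor needs simple connectivity of closures of strata (it only establishes $H^{\odd}=0$ for them, via Lemma~\ref{L:Masuda_Panov_orb} and Proposition~\ref{P:right-codimension}, and rational ellipticity of face preimages via \cite[Cor.~3.3.11]{AP}), so an induction hinging on simple connectivity of $\pi^{-1}(F_i)$ would need separate justification. The final steps you sketch (GKM/Stanley--Reisner comparison of $H^*_T$, formality giving a $T$-equivariant equivalence, and unimodularity of the characteristic data at vertices forcing $\hL$ to act freely in the manifold case) do match the paper's argument.
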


The final statement in Theorem \ref{T:main_thm} regarding manifolds is closely related to Theorem 1.1 of \cite{Wi}, where a stronger assumption (torsion-free integral cohomology) is required in order to obtain a correspondingly strong conclusion (classification up to homeomorphism).

Torus orbifolds have been studied in \cite{Ha,HaMa1,HaMa2} and arise naturally in the study of smooth torus actions on manifolds, for example, when the action is slice maximal.


\begin{defn*}
Let $M$ be a closed, orientable, smooth $n$-manifold on which a torus $T^k$ acts smoothly and effectively, and let $m$ be the minimal dimension of an orbit. The action is \emph{slice maximal} if $2k=n+m$.
 \end{defn*}

It is clear from the definition that torus manifolds are an extremal case of slice-maximal actions.  For a generic $k$-torus action on an $n$-dimensional manifold, it follows from the slice representation at a minimal orbit that $2k\leq n+m$. Thus, if equality holds, the slice representation at a minimal orbit is even dimensional and has maximal symmetry rank, justifying the terminology  ``slice maximal''.  Slice-maximal actions were considered in \cite{Is, Us}, where they were called \emph{maximal}.

Given an $n$-manifold $M$ with a slice-maximal $T^k$ action, there exists a subtorus $T^m \In T^k$ acting almost freely on $M$ and the quotient $\orb = M/T^m$ is a $2(k-m)$-dimensional torus orbifold.  Moreover, if $M$ is rationally elliptic, so too is the quotient $\orb$.

By applying Theorem \ref{T:main_thm}, it turns out that the existence of a slice-maximal torus action has strong implications on the topology of a manifold.


\begin{theorem} 
\label{T:CORE_THM}
Let $M$ be an $n$-dimensional, smooth, closed, simply-connected, rationally-elliptic manifold with a slice-maximal $T^k$ action.  Then there is a product $\hP$ of spheres of dimension $\geq 3$, a torus $\hat{K}$ acting linearly and freely on $\hP$, and an effective, linear action of $T^k$ on  $\hM=\hP/\hat{K}$, such that there is a $T^k$-equivariant rational homotopy equivalence $M\simeq_{\QQ}\hM$.
\end{theorem}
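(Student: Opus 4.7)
The plan is to reduce to Theorem~\ref{T:main_thm} via the almost-free subtorus and then reconstruct the total space from rational bundle data. By the discussion preceding the statement, the slice-maximal condition provides a subtorus $T^m \subseteq T^k$ acting almost freely on $M$, with quotient $\orb := M/T^m$ a simply-connected, rationally-elliptic, $2(k-m)$-dimensional torus orbifold carrying an effective action of $T^{k-m} := T^k/T^m$. Applying Theorem~\ref{T:main_thm} to $(\orb, T^{k-m})$ yields a product $\hat{P}$ of spheres of dimension $\geq 3$, a torus $\hat{L}$ acting linearly and almost freely on $\hat{P}$, an effective linear $T^{k-m}$-action on $\hat{\orb} := \hat{P}/\hat{L}$, and a $T^{k-m}$-equivariant rational equivalence $\orb \simeq_\Q \hat{\orb}$.

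Next, interpret $M \to \orb$ as an almost-free principal $T^m$-orbi-bundle, rationally classified by a class in $H^2(\orb; \Q^m)$, and transport it across the equivalence of Theorem~\ref{T:main_thm} to a class in $H^2(\hat{\orb}; \Q^m)$. Since $\hat{\orb} \simeq_\Q \hat{P}_{\hat{L}}$, there is a fibration $\hat{P} \hookrightarrow \hat{\orb} \to B\hat{L}$, and the vanishing $H^2(\hat{P}; \Q) = 0$ (all spherical factors having dimension $\geq 3$) forces, via the Serre spectral sequence, an isomorphism $H^2(B\hat{L}; \Q) \cong H^2(\hat{\orb}; \Q)$. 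Hence the transported class corresponds to a rational homomorphism $\chi \colon \hat{L} \to T^m$; simple-connectivity of $M$ forces $\chi$ to be surjective. Set $\hat{K} := \ker \chi$, a codimension-$m$ subtorus of $\hat{L}$. The manifold hypothesis on $M$ translates, via the local structure of the orbi-bundle, into the injectivity of $\chi|_{\hat{L}_p}$ for every $p \in \hat{P}$, which is equivalent to the $\hat{K}$-action on $\hat{P}$ being free. Consequently, $\hat{M} := \hat{P}/\hat{K}$ is a manifold, equipped with an almost-free $T^m = \hat{L}/\hat{K}$-action whose quotient is $\hat{\orb}$, together with the commuting linear $T^{k-m}$-action lifted from $\hat{P}$; these assemble into the required linear $T^k$-action on $\hat{M}$.

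Finally, the equivariance of the rational equivalence $M \simeq_\Q \hat{M}$ is verified by comparing Borel constructions: almost-freeness of the $T^m$-actions gives $M_{T^k} \simeq_\Q \orb_{T^{k-m}}$ and $\hat{M}_{T^k} \simeq_\Q \hat{\orb}_{T^{k-m}}$, so the $T^{k-m}$-equivariant equivalence from Theorem~\ref{T:main_thm}, combined with the compatibility of the classifying data, yields the desired $T^k$-equivariant rational equivalence. The main obstacle is precisely this last step: ensuring that the classifying homomorphism $\chi$ is compatible with the $T^{k-m}$-equivariance, so that the constructed $\hat{M}$ captures the full $T^k$-equivariant rational homotopy type of $M$, rather than merely a $T^{k-m}$-equivariant equivalence on the quotient orbifold together with an independent match on the $T^m$-fibers.
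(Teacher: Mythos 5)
Your overall reduction (pass to the almost-free subtorus, form the torus orbifold $\orb=M/T^m$, apply Theorem~\ref{T:main_thm}, then try to recover $M$ as the total space of a rational principal $T^m$-bundle over $\horb$) matches the paper's strategy, but the central construction has a genuine gap: you build the homomorphism $\chi$ out of a \emph{rational} cohomology class transported across the equivalence $\orb\simeq_{\QQ}\horb$, and then claim that the manifold hypothesis on $M$ yields injectivity of $\chi|_{\hL_p}$, hence freeness of $\hat K=\ker\chi$ on $\hP$. Since $\hL$ acts almost freely, the isotropy groups $\hL_p$ are \emph{finite}, so this is a torsion-level statement about finite subgroups of $\hL$; a class in $H^2(\horb;\QQ^m)$ determines $\chi$ only as a map of rational Lie algebras and is blind to exactly this data, and a rational homotopy equivalence does not identify the local (integral) isotropy structure of $M\to\orb$ with that of $\hP\to\horb$. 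The paper avoids this by never leaving the integral level: it constructs an honest torus homomorphism $\lambda_M:T_Q\to T_M$ lifting the label map facet by facet from the isotropy subgroups $(T_M)^o_p$ themselves, proves that $\lambda_M$ is surjective (this uses simple connectivity of $M$, infinitesimal polarity together with Lytchak's theorem to rule out exceptional orbits, Bredon's rational acyclicity of $Q_{\reg}$, and a disk/lasso argument) and that each $T(q)$ maps isomorphically onto $(T_M)^o_p$; freeness of $K_{\hP}=\ker\lambda_M$ on $\hP$ (Lemma~\ref{L:FREE_SUBTORUS}) is then immediate because $T(q)\cap K_{\hP}$ is trivial. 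Note also that the integral surjectivity of $\lambda_M$ is what identifies $T_Q/K_{\hP}$ with $T_M$ as the \emph{same} torus acting on $\hM$; your rational surjectivity of $\chi$ only gives $\hL/\hat K\cong T^m$ up to isogeny.

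The second problem is the step you yourself flag as ``the main obstacle'': showing that the classifying data is compatible with the $T^{k-m}$-equivariance so that one gets a $T^k$-equivariant equivalence (and indeed, even the non-equivariant identification of your transported-class bundle with the geometric quotient $\hP/\hat K$ needs this compatibility). This is not a routine verification; it is where most of the paper's work in Section~\ref{S:CoreThm} goes. The paper proves it by producing a $T_M$-equivariant homeomorphism of the $1$-skeleta $M^{(1)}\cong\hM^{(1)}$ (Lemma~\ref{L:1-skeleta}), showing $H^2(\orb;\QQ)\to H^2(\orb^{(1)};\QQ)$ is injective (Lemma~\ref{L:INJ_I}), deducing the commutative diagram of Proposition~\ref{P:COMM_DIAG}, and then applying the algebraic bundle-comparison result Theorem~\ref{T:BUNDLE_RHEQ} before passing to Borel constructions. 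Without an argument of this kind your proposal establishes at best a rational equivalence of quotient orbifolds together with unmatched fibre data, which is precisely what the theorem requires one to go beyond.
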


As a first application, Theorem \ref{T:CORE_THM} has been used in \cite{GKR} to obtain a classification of closed, simply-connected, rationally-elliptic manifolds admitting effective torus actions of maximal rank up to equivariant rational homotopy equivalence.


For another interesting consequence of Theorem \ref{T:CORE_THM}, recall that the largest integer $r$ for which a closed, simply-connected space $M$ admits an almost-free $T^r$-action is called the \emph{toral rank} of $M$, and is denoted $\rk(M)$.  The Toral Rank Conjecture, formulated by S.\ Halperin, asserts that $\dim H^*(M ; \Q) \geq 2^{\rk(M)}$.


\begin{corollary} 
Let $M$ be a smooth, closed, simply-connected, rationally-elliptic, $n$-dimensional manifold with a slice-maximal torus action.  Then $M$ satisfies the Toral Rank Conjecture.
\end{corollary}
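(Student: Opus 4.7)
The plan is to reduce to the model $\hM=\hP/\hat{K}$ supplied by Theorem \ref{T:CORE_THM} and then combine a standard Serre-spectral-sequence count with the Allday--Halperin rank bound. Write $\hP = S^{n_1}\times\cdots\times S^{n_s}$ with each $n_i\geq 3$, let $a$ be the number of odd-dimensional and $b$ the number of even-dimensional sphere factors, and set $r=\dim\hat{K}$. Because a rational homotopy equivalence preserves $\dim_\Q H^*(-)$ and $\dim\pi_*(-)\otimes\Q$ in each degree, it is enough to establish
\[
\dim H^*(\hM;\Q)\geq 2^{a+b-r} \qquad \text{and} \qquad \rk(M)\leq a-r,
\]
and then combine them.

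For the cohomological lower bound, I would apply the Serre spectral sequence of the principal $\hat{K}$-bundle $\hat{K}\to\hP\to\hM$. Its $E_2$-page is $H^*(\hM;\Q)\otimes H^*(\hat{K};\Q)$, converging to $H^*(\hP;\Q)$, so
\[
2^{a+b}=\dim H^*(\hP;\Q)\;\leq\; \dim H^*(\hM;\Q)\cdot 2^r,
\]
which gives the desired inequality.

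For the upper bound on $\rk(M)$, I would invoke the Allday--Halperin theorem: any simply-connected space $X$ with $\dim\pi_*(X)_\Q<\infty$ admitting an almost-free torus action of rank $\rho$ satisfies $\rho\leq -\chi_\pi(X):=\dim\pi_{\mathrm{odd}}(X)_\Q-\dim\pi_{\mathrm{even}}(X)_\Q$. The long exact homotopy sequence of $\hat{K}\to\hP\to\hM$ combined with $\pi_2(\hP)=0$ (forced by $n_i\geq 3$) yields $\pi_2(\hM)_\Q\cong\Q^r$ and $\pi_n(\hM)_\Q\cong\pi_n(\hP)_\Q$ for $n\geq 3$. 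The rational homotopy of spheres gives $\dim\pi_{\mathrm{odd}}(\hP)_\Q=a+b$ and $\dim\pi_{\mathrm{even}}(\hP)_\Q=b$, so $-\chi_\pi(\hM)=a-r$. Since $\chi_\pi$ is a rational-homotopy invariant, $\rk(M)\leq -\chi_\pi(M)=-\chi_\pi(\hM)=a-r$.

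Combining the two estimates yields
\[
\dim H^*(M;\Q)=\dim H^*(\hM;\Q)\;\geq\; 2^{a+b-r}\;\geq\; 2^{a-r}\;\geq\; 2^{\rk(M)}.
\]
The only mild subtlety is that an almost-free torus action realising $\rk(M)$ need not transport along the equivariant rational equivalence of Theorem \ref{T:CORE_THM} (which is only asserted to be $T^k$-equivariant for the original slice-maximal action); the argument therefore works with the rational invariant $\chi_\pi$ rather than with the torus actions themselves. Everything else reduces to routine use of the fibration $\hat{K}\to\hP\to\hM$.
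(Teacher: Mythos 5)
Your proposal is correct, but it takes a genuinely different route from the paper. The paper does not apply Theorem \ref{T:CORE_THM} to $M$ directly: instead it passes to a principal $T^{b_2(M)}$-bundle $P \to M$ with rationally $2$-connected total space, lifts both the slice-maximal action and an almost-free $T^{\rk(M)}$-action to $P$, applies Theorem \ref{T:CORE_THM} to $P$ (where $H^2(P;\Q)=0$ forces $P$ to have the rational cohomology of a product of spheres of dimension $\geq 3$), quotes a known case of the Toral Rank Conjecture for such spaces \cite[Prop.~7.23]{FOT} to get $\dim H^*(P;\Q)\geq 2^{\rk(M)+b_2(M)}$, and then descends via the spectral-sequence estimate $\dim H^*(P;\Q)\leq 2^{b_2(M)}\dim H^*(M;\Q)$. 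You instead work with the model $\hM=\hP/\hat K$ itself, obtain the lower bound $\dim H^*(\hM;\Q)\geq 2^{a+b-r}$ from the Serre spectral sequence of $\hat K\to\hP\to\hM$ (legitimate, since $\hat K$ acts freely by Theorem \ref{T:CORE_THM} and $\hM$ is simply connected), and control $\rk(M)$ by the Allday--Halperin inequality $\rk(M)\leq-\chi_\pi(M)$ together with the rational-homotopy invariance of $\chi_\pi$ and the computation $-\chi_\pi(\hM)=a-r$ from the long exact homotopy sequence. Both proofs must deal with the same issue, namely that the almost-free action realising $\rk(M)$ has nothing to do with the slice-maximal action or with the equivariance in Theorem \ref{T:CORE_THM}; the paper handles it by lifting that action to $P$, while you replace it by the homotopy-theoretic rank bound -- your observation on this point is exactly right. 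The trade-off: the paper's route needs the $2$-connected cover construction and the (easy) fact that slice-maximality lifts, but then only quotes one black-box TRC result; your route avoids the cover entirely but needs the Allday--Halperin bound, which is not cited in the paper and should be referenced explicitly (it is standard, e.g.\ in the toral-rank chapter of \cite{FOT}), plus the explicit $\chi_\pi$ count. Your final chain $\dim H^*(M;\Q)\geq 2^{a+b-r}\geq 2^{a-r}\geq 2^{\rk(M)}$ is valid, and in fact slightly stronger than needed whenever $b>0$.
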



\begin{proof} Let $T^r$ act almost freely on $M$.  Given $H^2(M;\Q) = \Q^{b_2(M)}$, there is a principal $T^{b_2(M)}$-bundle over $M$ with (rationally) $2$-connected total space $P$.  As any action by a torus $T$ on $M$ lifts to a $T \x T^{b_2(M)}$ action on $P$, the slice-maximal action (resp. the almost-free $T^r$ action) on $M$ lifts to a slice-maximal action (resp. an almost-free $T^r \x T^{b_2(M)}$ action) on $P$.  By Theorem \ref{T:CORE_THM} and since $H^2(P;\Q) = 0$, $P$ must have the rational cohomology of a product of spheres of dimension $\geq 3$.  By  \cite[Prop.\ 7.23]{FOT}, $P$ satisfies the Toral Rank Conjecture, i.e. $H^*(P;\Q) \geq 2^{r + b_2(M)}$.  The result now follows from $\dim H^*(P ; \Q) \leq \dim H^*(T^{b_2(M)} ; \Q) \cdot \dim H^*(M ; \Q)$.
\end{proof}

Finally, recall that the Bott Conjecture asserts that a closed, simply-connected, non-negatively-curved Riemannian manifold is rationally elliptic.  In \cite{Sp}, W.\ Spindeler verified the conjecture for simply-connected, non-negatively-curved torus manifolds.  In fact, the conjecture also holds in the slice-maximal setting.


\begin{theorem}
\label{T:Bott}
Let $M$ be a closed, simply-connected, non-negatively-curved Riemannian manifold admitting an isometric, slice-maximal torus action.  Then $M$ is rationally elliptic.
\end{theorem}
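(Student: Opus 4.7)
The plan is to reduce the problem to the quotient torus orbifold $\orb := M/T^m$, where $T^m \subseteq T^k$ is the almost-freely acting subtorus provided by the slice-maximal hypothesis (as explained in the introduction), and then to establish rational ellipticity of $\orb$ by extending the main result of \cite{Sp} to the orbifold setting.  Since $T^m$ acts isometrically on $M$ with only finite isotropy, the orbit-space projection is a Riemannian orbifold submersion, and O'Neill's formula gives $\orb$ the structure of a closed Riemannian orbifold of non-negative sectional curvature.  Simple-connectedness of $\orb$ as an orbifold follows from that of $M$ via the long exact sequence of the Borel fibration $M \to M_{T^m} \to BT^m$ together with the rational equivalence $M_{T^m} \simeq_{\Q} \orb$ induced by the almost-freeness of the action: since $\pi_1(BT^m) = 0$ and $\pi_1(M) = 0$, one obtains $\pi_1^{\mathrm{orb}}(\orb) = \pi_1(M_{T^m}) = 0$.

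The heart of the argument, and its principal obstacle, will be an orbifold extension of Spindeler's theorem from \cite{Sp}: every closed, simply-connected, non-negatively-curved torus orbifold is rationally elliptic.  Spindeler's proof in the manifold case proceeds by an induction on dimension using the Cheeger--Gromoll soul construction, critical point theory of distance functions on Alexandrov spaces with $\curv \geq 0$, and an inductive analysis of the fixed-point components of subtori, each of which is itself a torus manifold of lower dimension.  Each ingredient admits an orbifold counterpart: non-negatively-curved Riemannian orbifolds are Alexandrov spaces with $\curv \geq 0$, an orbifold soul theorem is available, and the fixed-point components of subtori acting on a torus orbifold are themselves smooth suborbifolds that, after appropriate quotienting by the effectively acting subtorus, are again torus orbifolds of lower dimension.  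The main work will consist in carrying out Spindeler's inductive scheme in the orbifold category, carefully tracking the local group data at each singular stratum.

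Granted this, rational ellipticity of $M$ will follow by transporting along the rational Borel fibration $M \to \orb \to BT^m$.  Since $\pi_j(BT^m) \otimes \Q$ vanishes for $j \neq 2$ and equals $\Q^m$ in degree $2$, the long exact sequence of rational homotopy groups yields an isomorphism $\pi_j(M) \otimes \Q \cong \pi_j(\orb) \otimes \Q$ for every $j \geq 3$ and an injection $\pi_2(M) \otimes \Q \hookrightarrow \pi_2(\orb) \otimes \Q$.  Consequently $\dim \pi_*(M) \otimes \Q \leq \dim \pi_*(\orb) \otimes \Q < \infty$, and since $\dim H^*(M; \Q) < \infty$ holds because $M$ is closed, $M$ is rationally elliptic, as required.
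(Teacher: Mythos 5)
Your reduction to the quotient orbifold and your final transport step along $M \to M_{T^m} \to BT^m$ are fine, but the proposal has a genuine gap at its centre: the statement you call the ``heart of the argument'' --- that every closed, simply-connected, non-negatively-curved torus orbifold is rationally elliptic --- is not proved, only announced as a plan (``the main work will consist in carrying out Spindeler's inductive scheme in the orbifold category''). This is not a citable result, and it is exactly the hard part. The tools your sketch relies on are manifold theorems: Spindeler's structure result (Theorem 4.1 of \cite{Sp}, the double disc bundle decomposition for fixed-point homogeneous actions) and Grove--Halperin's Theorem D on rational $\Omega$-ellipticity of the homotopy fibre both live in the smooth manifold category, and transplanting them to orbifolds is a substantial project, not a routine adaptation. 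There is also a structural problem with your induction as stated: fixed-point components of subtori (and the pieces of any disc-bundle-type decomposition) need not be simply connected, so an inductive statement phrased for simply-connected orbifolds and rational ellipticity is too weak to close the induction; one must work with rational $\Omega$-ellipticity and drop simple-connectedness, which your formulation does not do.

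The paper sidesteps all of this by never passing to the quotient orbifold in the proof of this theorem: it argues directly on $M$, by induction on $d=\dim(M/T)$, proving rational $\Omega$-ellipticity without assuming $M$ simply connected. The slice-maximal condition at a minimal orbit produces a circle $S\In T$ whose fixed-point set has a codimension-two component $M'$ containing that orbit, so the $S$-action is fixed-point homogeneous; Spindeler's Theorem 4.1 (applied to the manifold $M$) then yields $M = D(M')\cup_E D(N)$, the induction hypothesis applies to $(M', T/S)$ since $M'$ is totally geodesic with $\dim(M'/(T/S))=d-1$, and the circle bundle $E\to M'$ together with Grove--Halperin's Theorem D gives rational $\Omega$-ellipticity of $M$. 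If you want to salvage your route, you would have to actually establish the orbifold analogues of these two inputs; as written, the proposal defers precisely the step that carries the mathematical content.
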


It is worth noting that non-negatively-curved torus manifolds have already been classified up to equivariant diffeomorphism in \cite{Wi}.  Furthermore, if the non-negative-curvature hypothesis in Theorem \ref{T:Bott} were to be replaced by positive curvature, then it would follow from the work of K.\ Grove and C.\ Searle \cite{GS} that $M$ is equivariantly diffeomorphic to a sphere or complex projective space equipped with a linear action.
\medskip


The paper is organized as follows: In Section~\ref{S:Setup}, some basic definitions and facts about orbifolds are collected, following the presentation in \cite{KL}, as well as some results on smooth actions on orbifolds. 
These results have  been included to provide a basic reference for compact Lie group actions on orbifolds, since they seem to be scattered in the literature  (see, for example, ~\cite{HaSa90,HaSa,LeTo,Ye}).
In Section~\ref{S:TORBIFOLDS}, torus orbifolds are introduced and their fundamental properties established. In Section~\ref{SS:MANGLECPX}, there is a brief review of GKM-theory applied to torus orbifolds. The proof of Theorem~\ref{T:main_thm} is contained in Section~\ref{S:PROOF}. In Section~\ref{S:Example}, an example of a family of rationally elliptic torus orbifolds which are not rationally homotopy equivalent to any rationally elliptic manifold is provided, illustrating that almost-free (rather than free) torus actions on products of spheres are necessary in the conclusion of the Theorem~\ref{T:main_thm}.  Section~\ref{S:CoreThm} is devoted to establishing Theorem~\ref{T:CORE_THM}.  In Section~\ref{S:Nonneg}, a version of Theorem~\ref{T:main_thm} for non-negatively-curved orbifolds of dimension $\leq 6$ is proven.  The case of general dimensions remains open.  Section~\ref{S:Nonneg} concludes with the proof of Theorem \ref{T:Bott}, which is independent of the rest of the paper.

The reader is referred to \cite{FHT}  for the basic definitions and results of rational homotopy theory.


\begin{ack} We would like to thank Matthias Franz for helpful comments on a previous version of this article, and Wolfgang Spindeler for discussions regarding his work in \cite{Sp}.
\end{ack}


\section{Review of orbifolds}
\label{S:Setup}


\begin{defn}
\label{D:local_model}
A \emph{local model of dimension $n$} is a pair $(\tilde{U}, \Gamma)$, where $\tilde{U}$ is an open, connected subset of a Euclidean space $\RR^n$, and $\Gamma$ is a finite group acting smoothly and effectively on $\tilde{U}$. 

A \emph{smooth map} $(\tilde{U}_1,\Gamma_1)\to (\tilde{U}_2,\Gamma_2)$ between local models $(\tilde{U}_i,\Gamma_i)$, $i=1,2$, is a homomorphism $\varphi_{_\#}:\Gamma_1\to \Gamma_2$ together with a $\varphi_{_\#}$-equivariant smooth map $\tilde{\varphi}:\tilde{U}_1\to \tilde{U}_2$, i.e. $\tilde\vphi(\gamma\cdot \tilde u) = \vphi_\#(\gamma) \cdot \tilde\vphi(\tilde u)$, for all $\gamma \in \Gamma_1$, $\tilde u \in \tilde U_1$.
\end{defn}

Given a local model $(\tilde{U},\Gamma)$, denote by $U$ the quotient $\tilde{U}/\Gamma$. Clearly, a smooth map $\tilde{\varphi}:(\tilde{U}_1,\Gamma_1)\to (\tilde{U}_2,\Gamma_2)$ induces a map $\varphi:U_1\to U_2$.
The map $\varphi$ is called an \emph{embedding} if $\tilde{\varphi}$ is an embedding. In this case, the effectiveness of the actions in the local models implies that $\varphi_{_\#}$ is injective.


\begin{defn}
An \emph{$n$-dimensional local chart} $(U_p, \tilde U_p, \Gamma_p, \pi_p)$ around a point $p$ in a topological space $X$ consists of:
\begin{enumerate}
\item A neighbourhood $U_p$ of $p$ in $X$;
\item A local model $(\tilde{U}_p, \Gamma_p)$ of dimension $n$;
\item A $\Gamma_p$-equivariant projection $\pi_p:\tilde{U}_p\to U_p$, where $\Gamma_p$ acts trivially on $U_p$, that induces a homeomorphism $\tilde{U}_p/\Gamma_p\to U_p$.
\end{enumerate}
If $\pi_p^{-1}(p)$ consists of a single point, $\tilde{p}$, then $(U_p, \tilde U_p, \Gamma_p, \pi_p)$ is called a \emph{good local chart} around $p$.  In particular, $\tilde p$ is fixed by the action of $\Gamma_p$ on $\tilde{U}_p$.
\end{defn}

Note that, given a good local chart $(U_p, \tilde U_p, \Gamma_p, \pi_p)$ around a point $p$ in a topological space $X$, the $4$-tuple $(U_p, \tilde U_p, \Gamma_p, \pi_p)$ is also a local chart, not necessarily good, around any other point $q \in U_p$.  By abusing notation, a local chart $(U, \tilde U, \Gamma, \pi)$ will from now on be denoted simply by $U$.

\begin{defn}
An \emph{$n$-dimensional orbifold atlas} for a topological space $X$ is a collection of $n$-dimensional local charts $\mc{A}=\{U_{\alpha}\}_\alpha$ such that the neighbourhoods $U_\alpha \in \mc{A}$ give an open covering of $X$ and:
\newenvironment{Disp}
{\begin{list}{}{%
    \setlength{\leftmargin}{4mm}}
  \item[] \ignorespaces}
{\unskip \end{list}}
\begin{Disp}
For any $p\in U_{\alpha}\cap U_{\beta}$, there is a local chart $U_\gamma \in \mc{A}$ with $p\in U_{\gamma}\In U_{\alpha}\cap U_{\beta}$ and embeddings $(\tilde{U}_{\gamma}, \Gamma_{\gamma})\to(\tilde{U}_{\alpha}, \Gamma_{\alpha})$, $(\tilde{U}_{\gamma}, \Gamma_{\gamma})\to (\tilde{U}_{\beta}, \Gamma_{\beta})$.
\end{Disp}
Two $n$-dimensional atlases are called \emph{equivalent} if they are contained in a third atlas. 
\end{defn}


\begin{defn}
An \emph{$n$-dimensional (smooth) orbifold}, denoted by $\orb^n$ or simply $\orb$, is a second-countable, Hausdorff topological space $|\orb|$, called the \emph{underlying topological space} of $\orb$, together with an \emph{equivalence class of $n$-dimensional orbifold atlases}. 

An orbifold is \emph{orientable} if every local model $\tilde{U}_{\alpha}$ is orientable, and if every $\Gamma_{\alpha}$ action and every embedding $\tilde{U}_{\gamma}\to \tilde{U}_{\alpha}$ is orientation preserving.   
Given an orientable orbifold $\orb$, it is not hard to see that the set of points $p\in \orb$ for which $\Gamma_p$ is non-trivial has codimension at least $2$ in $\orb$.  An orbifold $\orb$ is \emph{connected} (resp. \emph{closed}) if its underlying topological space $|\orb|$ is connected (resp.\ compact and without boundary).
\end{defn}

Given an orbifold $\orb$ and any point $p\in \orb$, one can always find a good local chart $U_p$ around $p$.  Moreover, the corresponding group $\Gamma_p$ does not depend on the choice of good local chart around $p$, and is referred to as the \emph{local group at $p$}.  From now on, only good local charts will be considered.

\begin{lem}
\label{L:localgp}
Let $\orb$ be an orbifold and $U_p$ a good local chart around $p \in \orb$.  Let $q \in U_p$, $\tilde q \in \pi_p^{-1}(q) \In U_p$ and $(\Gamma_p)_{\tilde q} = \{\gamma \in \Gamma_p \mid \gamma \cdot \tilde q = \tilde q \}$.  Then there exists a $(\Gamma_p)_{\tilde q}$-invariant neighbourhood $\tilde U_q \In \tilde U_p$ of $\tilde q$ such that $(\pi_p(\tilde U_q), \tilde U_q, (\Gamma_p)_{\tilde q}, \pi_p|_{\tilde U_p})$ is a good local chart around $q$.
\end{lem}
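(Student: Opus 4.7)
The plan is to construct $\tilde U_q$ as a sufficiently small $(\Gamma_p)_{\tilde q}$-invariant neighbourhood of $\tilde q$ that is disjoint from all of its non-isotropy $\Gamma_p$-translates. Since $\Gamma_p$ is finite, I would first average a background Riemannian metric on $\tilde U_p$ to obtain a $\Gamma_p$-invariant metric $g$. The set $\{\gamma \tilde q : \gamma \in \Gamma_p \setminus (\Gamma_p)_{\tilde q}\}$ is finite and does not contain $\tilde q$, so for sufficiently small $\eps>0$ the metric ball $\tilde U_q := B_\eps(\tilde q) \In \tilde U_p$ satisfies $\gamma \tilde U_q \cap \tilde U_q = \emptyset$ for every $\gamma \in \Gamma_p \setminus (\Gamma_p)_{\tilde q}$. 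Observe that $\tilde U_q$ is open, connected, and $(\Gamma_p)_{\tilde q}$-invariant, the last point because $(\Gamma_p)_{\tilde q}$ acts by isometries fixing $\tilde q$.

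With $\tilde U_q$ in hand, I would then verify that $(\pi_p(\tilde U_q), \tilde U_q, (\Gamma_p)_{\tilde q}, \pi_p|_{\tilde U_q})$ is a good local chart around $q$. Openness of $\pi_p(\tilde U_q)$ in $U_p$ is immediate from $\pi_p^{-1}(\pi_p(\tilde U_q)) = \bigcup_{\gamma \in \Gamma_p} \gamma \tilde U_q$. If $\tilde u_1, \tilde u_2 \in \tilde U_q$ satisfy $\pi_p(\tilde u_1) = \pi_p(\tilde u_2)$, then $\tilde u_2 = \gamma \tilde u_1$ for some $\gamma \in \Gamma_p$, and the disjointness property forces $\gamma \in (\Gamma_p)_{\tilde q}$. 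Thus $\pi_p|_{\tilde U_q}$ descends to a continuous bijection $\tilde U_q / (\Gamma_p)_{\tilde q} \to \pi_p(\tilde U_q)$, which is a homeomorphism since $\pi_p$ is an open map. The same disjointness argument applied to a hypothetical second preimage of $q$ in $\tilde U_q$ yields $\pi_p^{-1}(q) \cap \tilde U_q = \{\tilde q\}$, so the chart is good.

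The remaining point, and the one I expect to be the main obstacle, is the effectiveness of the $(\Gamma_p)_{\tilde q}$-action on $\tilde U_q$ required by Definition~\ref{D:local_model}. Suppose $\gamma \in (\Gamma_p)_{\tilde q}$ acts trivially on $\tilde U_q$; then $\gamma$ is an isometry of $(\tilde U_p, g)$ whose fixed set contains the open set $\tilde U_q$. Since $\Fix(\gamma)$ is a closed, totally-geodesic submanifold of $\tilde U_p$ and its component through $\tilde q$ has the same dimension as $\tilde U_p$, this component is both open and closed in $\tilde U_p$. Connectedness of $\tilde U_p$ then forces $\Fix(\gamma) = \tilde U_p$, and effectiveness of the original $\Gamma_p$-action on $\tilde U_p$ forces $\gamma = 1$. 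Aside from this step, the remainder of the argument is formal, relying only on finiteness of $\Gamma_p$ and the fact that the orbit projection is an open map.
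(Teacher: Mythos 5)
Your argument is correct and follows essentially the same route as the paper: choose a $(\Gamma_p)_{\tilde q}$-invariant neighbourhood of $\tilde q$ disjoint from its translates by elements of $\Gamma_p \setminus (\Gamma_p)_{\tilde q}$, then check that $\pi_p$ restricted to it is open and induces a homeomorphism $\tilde U_q/(\Gamma_p)_{\tilde q} \to \pi_p(\tilde U_q)$ with $\tilde q$ the unique preimage of $q$. Your additional verifications (the explicit averaged-metric construction of $\tilde U_q$ and the effectiveness of the $(\Gamma_p)_{\tilde q}$-action via the isometry fixed-set argument) are details the paper leaves implicit, but they are sound and do not change the approach.
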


\begin{proof}
Define $\tilde U_q$ to be a $(\Gamma_p)_{\tilde q}$-invariant neighbourhood of $\tilde q$ such that, for every $\gamma \in \Gamma_p \setminus (\Gamma_p)_{\tilde q}$, one has $\tilde U_q \cap \tilde \gamma \cdot \tilde U_q = \emptyset$.  Then, by definition of $\pi_p$, $U_q = \pi_p(\tilde U_q) \In U_p$ is open and the restriction $\pi_p|_{\tilde U_q} : \tilde U_q \to U_q$ is an open, continuous, $(\Gamma_p)_{\tilde q}$-equivariant map.  As the pre-image of each point in $U_q$ is an orbit of the $(\Gamma_p)_{\tilde q}$ action on $\tilde U_q$, $\pi_p|_{\tilde U_q}$ induces a homeomorphism $\tilde U_q / (\Gamma_p)_{\tilde q} \to U_q$.  Hence, $U_q$ is a good local chart around $q$.
\end{proof}

In particular, given a good local chart $U_p$ around $p \in \orb$, a point $q \in U_p$ and $\tilde q \in \pi_p^{-1}(q)$, one can identify the local group $\Gamma_q$ at $q$ with $(\Gamma_p)_{\tilde q}$.


\begin{defn}
A \emph{smooth map} $\varphi: \orb_1 \to  \orb_2$ between orbifolds is given by a continuous map $|\varphi| : |\orb_1| \to |\orb_2|$ such that, if $U_p$ and $U_{\varphi(p)}$ are (good) local charts around $p \in \orb_1$ and $\varphi(p) \in \orb_2$, respectively, such that $\varphi\left(U_p\right)\In U_{\varphi(p)}$, then there is a (possibly non-unique) \emph{smooth lift at $p \in \orb_1$}, $\tilde{\varphi}_p : (\tilde{U}_p, \Gamma_p) \to (\tilde{U}_{\varphi(p)}, \Gamma_{\varphi(p)})$, so that the diagram
\[
\xymatrix{\ar @{} [drr] |\circlearrowright
 \tilde{U}_p \ar[d]_{\pi_p} \ar[rr]^{\tilde{\varphi}_p}_{C^\infty} && \tilde{U}_{\varphi(p)} \ar[d]^{\pi_{\varphi(p)}} \\
 U_p \ar[rr]_{\varphi}        && U_{\varphi(p)}        }
\]
commutes and there is an induced homomorphism $(\tilde \vphi_p)_\#:\Gamma_p\to \Gamma_{\varphi(p)}.$

A  \emph{diffeomorphism} $\varphi:\orb_1\to \orb_2$ between orbifolds is a smooth map with a smooth inverse. In this case, $\Gamma_p$ is isomorphic to $\Gamma_{\varphi(p)}$ and, given two (smooth) lifts $\tilde{\varphi}_1$, $\tilde{\varphi}_2$ of $\varphi$, there exists $\gamma\in \Gamma_{\varphi(p)}$ such that $\tilde{\varphi}_1=\gamma\cdot \tilde{\varphi}_2$.  Further properties of smooth maps between orbifolds are discussed in \cite{Bo}.
\end{defn}


\begin{rem} 
For a general smooth map $\varphi: \orb_1 \to  \orb_2$, it is not always the case that any two lifts $\tilde{\varphi}_1$, $\tilde{\varphi}_2$ of $\varphi$ at $p\in \orb_1$ differ by composition with an element in $\Gamma_{\varphi(p)}$. Indeed, the map $f:\mathbb{R}\rightarrow \mathbb{R}/{\pm1}$ given by
\[
   f(x) = \left\{
     \begin{array}{lr}
        0, & x =0; \\
       \exp(-1/x^2), & x\neq 0
     \end{array}
   \right.
\]
is smooth and has four different lifts at $0$. The local group at $x=0$, however, is $\mathbb{Z}_2$.
\end{rem}

 
\begin{defn}
An orbifold $\orb_1$ is a \emph{suborbifold} of an orbifold $\orb_2$, if there is a smooth map $\varphi:\orb_1\to \orb_2$ such that $|\varphi|$ maps $|\orb_1|$ homeomorphically onto its image in $|\orb_2|$ and, for every $p\in \orb_1$, some (and, hence, every) 
smooth lift $\tilde\varphi_p:\tilde{U}_{p}\to\tilde{U}_{\varphi(p)}$ is an immersion. In this case, $\orb_1$ will be identified with its image.
\end{defn}


\begin{defn}
A suborbifold $\orb_1\In \orb_2$ is a \emph{strong suborbifold} if, for every $p\in\orb_1$ and every good local chart $U_p$, the image of a smooth lift $\tilde{\varphi}_p$ is independent of the choice of lift. 
\end{defn}

If $\orb_1$ is a suborbifold of $\orb_2$ and $p\in \orb_1$, the local group $\Gamma_p^1$ of $p$ in $\orb_1$ and the local group $\Gamma_p^2$ of $p$ in $\orb_2$ may, in general, be different, but it is always the case that $\Gamma_p^1\In \Gamma_p^2$. Notice that  when $\orb_1$ is a strong suborbifold and $U_p$ is a good local chart in $\orb_2$, the set $\pi_p^{-1}(U_p\cap \orb_1)$ is a submanifold preserved by the induced $\Gamma^2_p$ action. The above definition of strong suborbifold is equivalent to Thurston's definition of suborbifold (cf.~\cite{Thu}).

Given a strong suborbifold $\orb_1 \In \orb_2$, let $U_p$ be a good local chart (in $\orb_2$) around $p \in \orb_1$ and let $\tilde{T}_p U_p$ be the tangent space to $\tilde{U}_p$ at $\tilde{p}=\pi_p^{-1}(p)$.  Denote by $\tilde{T}_p \orb_1 \In \tilde{T}_p U_p$ the tangent space to $\pi_p^{-1}(\orb_1 \cap U_p)$ at $\tilde{p}$.  
Then the space $\tilde{T}_pU_p$ splits as $\tilde{T}_p \orb_1\oplus \tilde{\nu}_p \orb_1$, where $\tilde{\nu}_p \orb_1$ denotes the normal space to $\tilde{T}_p \orb_1 \In \tilde{T}_p U_p$.


\subsection*{Smooth actions on orbifolds}\label{SS:smooth-actions}


\begin{defn}
Let $\orb$ be an orbifold. A \emph{smooth action} of a Lie group $G$ on $\orb$ is a smooth map
\begin{align*}
\varphi:G\times \orb &\rightarrow \orb\\
(g,p) &\mapsto g\cdot p
\end{align*}
such that the following conditions hold:
\begin{itemize}
	\item $g_1\cdot (g_2\cdot p)=(g_1 g_2)\cdot p$ for any $g_1,g_2\in G$ and $p\in \orb$ and
	\item $e\cdot p = p$ for any $p\in \orb$.
\end{itemize}
\end{defn}

The set $G(p)=\{g\cdot p\ |\ g\in G\}$ is the \emph{orbit} of $G$ through $p \in \orb$.   The \emph{ineffective kernel} of the action  is the normal subgroup $\Ker= \{g \in G \mid \vphi(g,\cdot) = \id_\orb\}$.   If the ineffective kernel is trivial, the action is \emph{effective}.  The group $G/\Ker$ will always act effectively.  The \emph{isotropy subgroup} $G_p$ at $p \in \orb$ is the subgroup consisting of those elements in $G$ that fix $p$.  Note that, whenever $G$ is compact, one can always find a $G_p$-invariant good local chart around $p$.
If $G_p$ is trivial (resp.\ finite) for every $p\in \orb$, the action is \emph{free} (resp.\ \emph{almost free}).  The orbit space of the action will be denoted by $\orb/G$ and the fixed-point set $\{p \in \orb \mid G_p = G\}$ by $\orb^G$.  The identity component of $G$ is denoted by $G^o$.  


\begin{lem}
\label{L:strong}
Every $G$-orbit in $\orb$ is a manifold, as well as a strong suborbifold of $\orb$.
\end{lem}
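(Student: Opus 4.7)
The plan is to transfer the standard manifold structure on $G/G_p$ to the orbit $G(p)$ and then, by working with an appropriately normalised lift of the action, to identify the preimage of $G(p)$ in every good chart as a canonical embedded submanifold.

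Since the evaluation map $g\mapsto g\cdot p$ is continuous from $G$ into the Hausdorff space $|\orb|$, the isotropy $G_p$ is a closed subgroup of $G$; by the closed-subgroup theorem, $G/G_p$ carries a unique smooth manifold structure making $G\to G/G_p$ a submersion, and transferring this structure along the injection $G/G_p\hookrightarrow \orb$ supplies the manifold structure on $G(p)$.

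To establish the strong suborbifold property, fix $q=g_0\cdot p\in G(p)$ and a good chart $(\tilde U_q,\Gamma_q,\pi_q)$ with centre $\tilde q$, and let $\tilde\varphi\colon V\x\tilde U_q\to\tilde U_q$ be a smooth lift of the action on a neighbourhood $V$ of $e\in G$. Since $\id_\orb$ is a diffeomorphism, any two lifts of $\id_\orb$ differ by a $\Gamma_q$-translation, so we may normalise so that $\tilde\varphi(e,\cdot)=\id_{\tilde U_q}$; this forces the induced homomorphism $\tilde\varphi_\#\colon \Gamma_q\to\Gamma_q$ to equal the identity. Setting $\tilde\psi(h)=\tilde\varphi(h,\tilde q)$ and invoking equivariance yields $\gamma\cdot\tilde\psi(h)=\tilde\varphi(h,\gamma\cdot\tilde q)=\tilde\psi(h)$ for every $\gamma\in\Gamma_q$, so the image $N:=\tilde\psi(V)$ lies pointwise in the fixed set $\Fix(\Gamma_q)\In\tilde U_q$. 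Since $\pi_q$ restricts to an injection on $\Fix(\Gamma_q)$, the rank of $\tilde\psi$ at $e$ can be computed in the manifold $\Fix(\Gamma_q)$: its kernel consists of those $X\in\g$ whose fundamental vector field vanishes at $\tilde q$, equivalently those $X$ with $\exp(tX)\in G_q$, i.e.\ $\g_q$. The constant-rank theorem therefore produces, after shrinking $V$, a smooth embedded submanifold $N\In\tilde U_q$ through $\tilde q$ of dimension $\dim G(p)$, and the pointwise $\Gamma_q$-fixing gives $\pi_q^{-1}(U_q\cap G(p))=N$. Any smooth lift of the inclusion $G(p)\hookrightarrow \orb$ at $q$ is forced into this preimage and by dimension must have image equal to $N$; since $N$ depends only on $q$ and the chart, $G(p)$ is a strong suborbifold.

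The principal technical point is the observation that the normalised action lift sends the $G$-orbit of $\tilde q$ into $\Fix(\Gamma_q)$; this simultaneously reduces the rank computation to the manifold setting (where the zero-of-a-fundamental-vector-field argument supplies the non-trivial implication $d\tilde\psi_e(X)=0\Ra X\in\g_q$) and forces the lift image to be canonical, yielding the strong-suborbifold property.
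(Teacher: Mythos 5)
Your normalisation trick is a nice observation and it does prove something real: after arranging $\tilde\varphi(e,\cdot)=\id$ (hence $\varphi_\#=\id$ by effectiveness), equivariance plus $\gamma\cdot\tilde q=\tilde q$ shows that every point $\tilde\psi(h)$ lies in $\Fix(\Gamma_q)$, so $\pi_q$ is injective on the local sheet of the orbit through $\tilde q$ and a lift of the inclusion over a \emph{small} neighbourhood of $q$ in $G(p)$ is uniquely determined as $(\pi_q|_{\Fix(\Gamma_q)})^{-1}\circ\iota$. However, your ``principal technical point'' is not justified as stated: there is no fundamental vector field on $\tilde U_q$, because $\tilde\varphi$ is only a lift of the smooth map $G\times\orb\to\orb$ and nothing in the definitions guarantees that it satisfies the action identity $\tilde\varphi(h_1h_2,\tilde u)=\tilde\varphi\bigl(h_1,\tilde\varphi(h_2,\tilde u)\bigr)$; the manifold argument ``vector field vanishes at $\tilde q$ $\Rightarrow$ $\exp(tX)\in G_q$'' uses exactly that identity (the curve must be an integral curve of a $t$-independent field), so the implication $d\tilde\psi_e(X)=0\Rightarrow X\in\g_q$ is unproved. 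Moreover, the constant-rank theorem needs the rank of $\tilde\psi$ to be constant \emph{near} $e$, not merely computed at $e$; neither point is addressed, so the claim that $N=\tilde\psi(V)$ is an embedded $\dim G(p)$-dimensional submanifold (which is also what would carry the suborbifold/immersion part of the statement) is not established.

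The second gap is the identification $\pi_q^{-1}(U_q\cap G(p))=N$ and the step ``by dimension must have image equal to $N$''. The orbit may meet $U_q$ in points not of the form $h\cdot q$ with $h\in V$, so this equality is false in general, and the image of a lift of the inclusion of a chart $W$ of $G(p)$ is $\pi_q^{-1}(\iota(W))$, not $N$. Your argument therefore only treats lifts over small orbit neighbourhoods of $q$, whereas the definition of strong suborbifold (and its later use: $\pi_p^{-1}(U_p\cap\orb_1)$ is a submanifold) concerns the whole intersection of the orbit with the chart. The ingredient you are missing is the paper's mechanism: since $G$ acts by diffeomorphisms, all points of the orbit have isomorphic local groups, and by Lemma~\ref{L:localgp} the local group at any orbit point $q'\in U_q$ is $(\Gamma_q)_{\tilde q'}\In\Gamma_q$; being a subgroup of the finite group $\Gamma_q$ of the same order it equals $\Gamma_q$, so \emph{every} point of $\pi_q^{-1}(U_q\cap G(p))$ is $\Gamma_q$-fixed. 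That gives injectivity of $\pi_q$ on the full preimage, whence every lift of the inclusion equals $\pi_q^{-1}\circ\iota$ and its image is independent of the choice of lift — no rank computation or dimension count is needed for strongness. As written, your proof has genuine gaps at both the immersion step and the global-in-the-chart step.
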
  


\begin{proof}
The fact that \(G(p)\), $p \in \orb$, is a manifold and a suborbifold follows as in the manifold case. To see that it is a strong suborbifold, one can argue as follows: Since \(G\) acts by diffeomorphisms, the local groups at all points in the orbit are isomorphic.  However, by Lemma \ref{L:localgp}, the local group of a point \(q\) in a good local chart \(U_p\) around \(p\) is given by the isotropy subgroup of the $\Gamma_p$ action on $\tilde U_p$ at a point \(\tilde{q}\in \pi_p^{-1}(q)\). Therefore, \(\pi_p^{-1}(U_p\cap G(p))\) is contained in the fixed-point set of \(\Gamma_p\) in \(\tilde{U}_p\) and, hence, the restriction of \(\pi_p\) to \(\pi_p^{-1}(U_p\cap G(p))\) is a homeomorphism.  Consequently, the lift \(\tilde{\varphi}_p\) of the inclusion \(\varphi\) of a chart of \(G(p)\) into \(U_p\) is given by \(\pi_p^{-1}\circ \varphi\).  Therefore \(G(p)\) is a strong suborbifold.
\end{proof}
 

\begin{prop}
\label{P:LES} Let $\orb$ be  an orbifold with a smooth, effective action by a compact Lie group $G$.  Let $p\in \orb$ have isotropy subgroup $G_p\In G$ and let $U_p$ be a $G_p$-invariant good local chart. Then there exists a Lie group $\tilde{G}_p$ such that:
\begin{enumerate}
	\item $\tilde{G}_p$ acts on $\tilde{U}_p$ and $\tilde{U}_p/\tilde{G}_p = U_p/G_p$;
	\item $\tilde{G}_p$ is an extension of $G_p$ by $\Gamma_p$, i.e.~there exists a short exact sequence
	\[
	\{\,e\,\}\rightarrow \Gamma_p \rightarrow \tilde{G}_p \stackrel{\rho}{\rightarrow} G_p \rightarrow \{\,e\,\}.
	\]
\end{enumerate}
\end{prop}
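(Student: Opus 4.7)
The plan is to define $\tilde{G}_p$ directly as the group of all smooth diffeomorphisms of $\tilde{U}_p$ that cover, via $\pi_p$, the action of some element of $G_p$ on $U_p$, and then to verify the extension, the smooth structure, and the quotient statement in turn.

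First I would shrink $U_p$ (using a $G_p$-averaging argument to retain $G_p$-invariance) to a smaller good chart on which $\tilde{U}_p$ is a simply-connected open ball around $\tilde{p}$ and on which $\Gamma_p$ acts linearly. Then set
\[
\tilde{G}_p := \bigl\{ \tilde{g} \in \Diff(\tilde{U}_p) \mid \pi_p \circ \tilde{g} = g \circ \pi_p \text{ for some } g \in G_p \bigr\},
\]
with composition as the group law, and define $\rho : \tilde{G}_p \to G_p$ by $\tilde{g} \mapsto g$; this is a well-defined group homomorphism whose kernel consists precisely of the deck transformations of $\pi_p$, which, by connectedness of $\tilde{U}_p$, are exactly the elements of $\Gamma_p$ acting on $\tilde{U}_p$. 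This gives the short exact sequence of item~(2) as soon as $\rho$ is surjective.

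For surjectivity, fix $g \in G_p$. The restriction $g : U_p \to U_p$ is an orbifold diffeomorphism fixing $p$, so the definition of a smooth orbifold map provides a smooth lift $\tilde{g}_0$ of $g$ defined near $\tilde{p}$ with $\tilde{g}_0(\tilde{p}) = \tilde{p}$. Because $\tilde{U}_p$ is a simply-connected manifold and $\pi_p : \tilde{U}_p \to U_p$ realizes $\tilde{U}_p$ as the universal orbifold cover of $U_p$ with deck group $\Gamma_p$, a standard covering-theoretic argument (path lifting on the regular locus of $\Gamma_p$ combined with continuity across the singular locus, where local smooth lifts of $g$ are available by the smooth-orbifold-map definition) extends $\tilde{g}_0$ to a global smooth diffeomorphism $\tilde{g}$ of $\tilde{U}_p$ with $\rho(\tilde{g}) = g$.

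To finish, I would equip $\tilde{G}_p$ with a Lie group structure using the smooth local section of $\rho$ furnished by the construction above: for $g$ in a small neighbourhood $V$ of the identity in $G_p$, the lift depends smoothly on $g$, giving a smooth section $\sigma : V \to \tilde{G}_p$, and translating $\sigma$ by elements of $\tilde{G}_p$ yields a smooth atlas exhibiting $\tilde{G}_p$ as a Lie group finitely covering $G_p$, with respect to which the evaluation map $\tilde{G}_p \times \tilde{U}_p \to \tilde{U}_p$ is smooth (smoothness being inherited from that of the original $G$-action on $\orb$). Item~(1) is then immediate: surjectivity of $\rho$ forces the canonical continuous map $\tilde{U}_p/\tilde{G}_p \to U_p/G_p$ induced by $\pi_p$ to be a bijection, hence a homeomorphism of quotients. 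The main obstacle will be the careful execution of surjectivity of $\rho$, specifically the equivariant extension of a local smooth lift $\tilde{g}_0$ across the singular locus of the $\Gamma_p$-action; once this is in place, the remaining verifications are essentially formal.
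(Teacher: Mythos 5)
Your proposal is correct and follows essentially the same route as the paper: $\tilde{G}_p$ is defined as the group of all lifts of the maps $L_g$, $g \in G_p$, the kernel of $\rho$ is identified with $\Gamma_p$, and the quotient identification $\tilde{U}_p/\tilde{G}_p = U_p/G_p$ follows. The only real difference is that the step you single out as the main obstacle (surjectivity of $\rho$ via extending a local lift across the singular locus) is unnecessary in the paper's framework: since $U_p$ is $G_p$-invariant, $L_g(U_p) \subseteq U_p$, and the definition of a smooth map between orbifolds already furnishes a smooth lift $\tilde{L}_g$ defined on all of $\tilde{U}_p$.
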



\begin{proof}
Let $g\in G_p$. The action of $G_p$ on $U_p$ gives a smooth map 
\begin{align*}
L_g:U_p&\rightarrow U_p\\
q&\mapsto g \cdot q.
\end{align*}
Then, by definition, there exists a smooth lift $\tilde{L}_g:\tilde{U}_p\rightarrow \tilde{U}_p$ of $L_g$. Let $\tilde{G}_p=\{\, F_g : \tilde{U}_p\rightarrow \tilde{U}_p \mid \pi_p\circ F_g=L_g\circ \pi_p,\ g\in G_p\,\}$ be the collection of all possible lifts. This  is a group and, given that the $G_p$ action is smooth, it is not difficult to see that $\tilde{G}_p$ is a Lie group acting smoothly and effectively on $\tilde U_p$. Note that, since $\Gamma_p=\{\,F_e : \tilde{U}_p\rightarrow \tilde{U}_p \mid \pi_p\circ F_e=e\circ \pi_p=\pi_p\}$, $\Gamma_p$ is a normal subgroup of $\tilde{G}_p$. Moreover, $\Gamma_p$ acts on $\tilde{G}_p$ via $F_g\mapsto \gamma\cdot F_g$ and $\tilde{G}_p/\Gamma_p=G_p$, i.e.~the quotient by $\Gamma_p$ fixes a choice of lift corresponding to $L_g$. It then follows that $\tilde{U}_p/\tilde{G}_p = U_p/G_p$.
\end{proof}


\begin{cor}
\label{C:commute}
Let $\orb$ be  an orbifold with a smooth, effective action by a compact Lie group $G$. Let $p\in \orb$ have isotropy subgroup $G_p$ and let $U_p$ be a $G_p$-invariant good local chart. Then  the local group $\Gamma_p$ commutes with every connected subgroup of the lift $\tilde{G}_p$.
\end{cor}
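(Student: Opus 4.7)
The plan is to exploit the fact, established in the preceding Proposition \ref{P:LES}, that $\Gamma_p$ is a \emph{finite normal} subgroup of the Lie group $\tilde{G}_p$. The corollary will then follow from a general principle: a finite normal subgroup of a connected Lie group is necessarily central.

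First I would consider the conjugation action of $\tilde{G}_p$ on its normal subgroup $\Gamma_p$. Since conjugation is continuous and sends $\Gamma_p$ to itself, it induces a continuous homomorphism
\[
c : \tilde{G}_p \To \Aut(\Gamma_p), \qquad c(F)(\gamma) = F\gamma F^{-1}.
\]
Because $\Gamma_p$ is finite (by the definition of a local model), $\Aut(\Gamma_p)$ is a finite, hence discrete, group. Continuity of $c$ then forces its kernel $\Ker(c) \In \tilde{G}_p$ to be open and closed in $\tilde{G}_p$, and therefore to contain the identity component $\tilde{G}_p^{\,o}$.

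Now let $H \In \tilde{G}_p$ be any connected subgroup. Since $H$ is connected and contains the identity, it lies entirely within $\tilde{G}_p^{\,o}$, and hence within $\Ker(c)$. This means that $F\gamma F^{-1} = \gamma$ for every $F \in H$ and every $\gamma \in \Gamma_p$, which is precisely the assertion that $\Gamma_p$ commutes with $H$.

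There is really no serious obstacle here; the only subtlety is recognising that $\Gamma_p$ being finite makes $\Aut(\Gamma_p)$ discrete, which is what lets continuity of $c$ do the work. The result is a standard general fact about Lie groups, applied to the extension constructed in Proposition \ref{P:LES}, and will be used in the subsequent sections to analyse how local isotropy groups interact with connected (in particular, torus) subgroups acting on $\orb$.
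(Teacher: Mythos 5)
Your argument is correct and is essentially the paper's own proof: both use the normality of $\Gamma_p$ in $\tilde{G}_p$ from Proposition \ref{P:LES} together with the fact that a continuous conjugation map from a connected group into the finite (hence discrete) group $\Gamma_p$ must be constant. Packaging this via the homomorphism into $\Aut(\Gamma_p)$ rather than arguing elementwise, as the paper does, is only a cosmetic difference.
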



\begin{proof} Let $\tilde{H}$ be a connected subgroup of $\tilde G_p$, $\tilde{g}\in \tilde{H}$, and $\gamma\in \Gamma_p$. From the short exact sequence in Proposition \ref{P:LES}, the element $\tilde{g}\gamma\tilde{g}^{-1}$ belongs to $\Gamma_p$. Since $\tilde{H}$ is connected and $\Gamma_p$ is discrete, the map $\tilde{g}\mapsto \tilde{g}\gamma\tilde{g}^{-1}$ must be constant and hence $\tilde{g}\gamma\tilde{g}^{-1}=\gamma$ for all $\tilde{g}\in \tilde{H}$. 
\end{proof}


\begin{cor}\label{C:fixed-pt}
Let $G$ be a compact, connected Lie group acting smoothly and effectively on  an orbifold $\orb$ such that the fixed-point set $\orb^G$ is non-empty. Then each connected component of $\orb^G$ is a strong suborbifold.
\end{cor}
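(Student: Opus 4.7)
The plan is to reduce the question to a manifold-level statement on each good local chart via Proposition~\ref{P:LES}. Fix $p\in\orb^G$ and, using compactness of $G$, choose a $G$-invariant good local chart $(U_p,\tilde U_p,\Gamma_p,\pi_p)$ around $p$. Proposition~\ref{P:LES} produces a Lie group $\tilde G_p$ acting smoothly and effectively on the manifold $\tilde U_p$, with $\tilde G_p/\Gamma_p=G$. Since $G$ is connected and $\Gamma_p$ is discrete, the projection $\rho:\tilde G_p^o\to G$ is surjective. I will work with $\tilde F:=(\tilde U_p)^{\tilde G_p^o}$, a closed smooth submanifold of $\tilde U_p$ by the classical theory of compact Lie group actions on manifolds; by Corollary~\ref{C:commute}, $\Gamma_p$ commutes with $\tilde G_p^o$ and so preserves $\tilde F$.

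The central step is to prove $\pi_p(\tilde F)=U_p\cap\orb^G$. For ``$\subseteq$'', if $\tilde q\in\tilde F$ and $g\in G$, pick $\tilde g\in\tilde G_p^o$ with $\rho(\tilde g)=g$; then $g\cdot\pi_p(\tilde q)=\pi_p(\tilde g\cdot\tilde q)=\pi_p(\tilde q)$. For ``$\supseteq$'', given $q\in U_p\cap\orb^G$ and $\tilde q\in\pi_p^{-1}(q)$, each $\tilde g\in\tilde G_p$ sends $\tilde q$ into $\pi_p^{-1}(q)=\Gamma_p\cdot\tilde q$, so the stabilizer $\tilde G_{p,\tilde q}$ projects onto $G$ under $\rho$. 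As $\ker\rho$ is discrete, $\dim\tilde G_{p,\tilde q}=\dim\tilde G_p$, whence $\tilde G_{p,\tilde q}^o=\tilde G_p^o$ and $\tilde q\in\tilde F$.

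Now let $\orb_0$ be a connected component of $\orb^G$. For each $p\in\orb_0$, the unique preimage $\tilde p\in\pi_p^{-1}(p)$ is fixed by all of $\Gamma_p$ (the chart is good), so $\Gamma_p$ preserves the connected component $\tilde F_{\tilde p}$ of $\tilde F$ through $\tilde p$. The quadruple $(\pi_p(\tilde F_{\tilde p}),\tilde F_{\tilde p},\Gamma_p/K_p,\pi_p|_{\tilde F_{\tilde p}})$, where $K_p\In\Gamma_p$ is the kernel of the $\Gamma_p$-action on $\tilde F_{\tilde p}$, defines an orbifold chart for $\orb_0$; compatibility across overlaps is inherited from the ambient atlas. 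The inclusion $\orb_0\hookrightarrow\orb$ is then smooth, with immersive lifts $\tilde F_{\tilde p}\hookrightarrow\tilde U_p$. Any two smooth lifts of this inclusion at $p$ differ by left multiplication by some $\gamma\in\Gamma_p$, and since $\gamma\cdot\tilde F_{\tilde p}=\tilde F_{\tilde p}$, their images in $\tilde U_p$ coincide, establishing the strong suborbifold property. The main obstacle I expect is the dimension-counting step in the second paragraph, which relies crucially on the surjectivity $\rho(\tilde G_p^o)=G$; this is precisely where the connectedness hypothesis on $G$ is used. The remaining ingredients (smoothness of manifold fixed-point sets, and the bookkeeping to patch the charts for $\orb_0$) are routine.
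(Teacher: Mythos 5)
Your argument is correct and follows the paper's own proof in all essentials: both pass to a $G$-invariant good chart, identify $\pi_p^{-1}(\orb^G\cap U_p)$ with the fixed-point set $\tilde U_p^{\tilde G_p^o}$ using connectedness of $G$ together with discreteness of $\Gamma_p$, and then invoke Corollary~\ref{C:commute} to see that $\Gamma_p$ preserves this submanifold, which is exactly the criterion used for being a strong suborbifold. The only cosmetic difference is that you establish $\tilde q\in\tilde U_p^{\tilde G_p^o}$ by a dimension count on the stabiliser $\tilde G_{p,\tilde q}$, whereas the paper argues directly that the assignment $\tilde g\mapsto\gamma_{\tilde g}\in\Gamma_p$ must be constant on the connected group $\tilde G_p^o$.
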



\begin{proof}
Let $p\in \orb^G$ and let $U_p$ be a $G$-invariant good local chart around $p$. The goal is to prove that $\pi_p^{-1}(\orb^G\cap U_p)$ is a submanifold in $\tilde{U}_p$.

Since $G$ is connected, the map $\tilde{G}^o\to G$ is a covering and therefore for every $g\in G$ there is a $\tilde{g}\in \tilde{G}^o$ projecting to $g$. Let $q\in \orb^G\cap U_p$ and choose $\tilde{q}\in \pi^{-1}(q) \In \tilde U_p$. As $g\cdot q=q$, for every $g\in G$, it follows that for every $\tilde{g}\in \tilde{G}^o$ there is a $\gamma_{\tilde{g}}\in \Gamma_p$ such that $\tilde{g}\cdot \tilde{q}=\gamma_{\tilde{g}}\cdot\tilde{q}$. But $\tilde{G}^o$ is connected, hence $\gamma_{\tilde{g}}=e$ for every $\tilde{g}\in \tilde{G}^o$. Thus $\tilde{q}$ is fixed by $\tilde{G}^o$ and so $\pi_p^{-1}(\orb^G \cap U_p)\In \tilde{U}_p^{\tilde{G}^o}$. The other inclusion trivially holds, and therefore
\[
\pi_p^{-1}(\orb^G \cap U_p)= \tilde{U}_p^{\tilde{G}^o}.
\]

By Corollary \ref{C:commute}, $\Gamma_p$ commutes with $\tilde{G}^o$ and, in particular, $\Gamma_p$ preserves the fixed-point set of $\tilde{G}^o$. Then $\orb^G \cap U_p$ is a strong suborbifold and, since $p$ was arbitrary, it follows that $\orb^G$ is a strong suborbifold.
\end{proof}


\begin{rem}
If a non-connected Lie group $G$ acts smoothly on an orbifold $\orb$, the fixed-point set $\orb^{G}$ need not even be a suborbifold. For example, let $\orb=\RR^2/\Gamma$, where $\Gamma=\ZZ_2$ is generated by $(x,y)\mapsto (-x,-y)$. The orbifold $\orb$ admits a global model $\pi:\RR^2\to \RR^2/\Gamma$. When $G=\ZZ_2$ acts on $\orb$ by the action $[x,y]\mapsto [x,-y]$, the pre-image of $\orb^G$ in $\RR^2$ is given by $\{x=0\}\cup \{y=0\}$, and thus $\orb^{G}$ is not an orbifold.
\end{rem}

Just as for manifolds, one has a notion of Riemannian metric for orbifolds.  An \emph{orbifold-Riemannian metric}  is given at each point $p$ in the orbifold by the metric on a good local chart $U_p$ around $p$ induced by a $\Gamma_p$-invariant Riemannian metric on $\tilde U_p$.
An orbifold equipped with an orbifold-Riemannian metric will be referred to as a \emph{Riemannian orbifold}.
 It is clear that Riemannian notions such as geodesics and completeness carry over to Riemannian orbifolds.
Recall that any orbifold on which a compact Lie group $G$ acts smoothly and effectively admits a $G$-invariant orbifold-Riemannian metric.  Kleiner's Isotropy Lemma (cf.~\cite{Kl}) also holds, with the same proof, in the context of complete Riemannian orbifolds.


\begin{lem}[Isotropy Lemma]
\label{L:Isot_Lem} Let $\orb$ be a  complete Riemannian orbifold and suppose that a compact Lie group $G$ acts effectively and isometrically on $\orb$. Let $c:[0,d]\rightarrow \orb$ be a minimal geodesic between the orbits $G(c(0))$ and $G(c(d))$. Then, for any $t\in (0,d)$, $G_{c(t)}=G_{c}$ does not depend on \(t\) and is a subgroup of $G_{c(0)}$ and of $G_{c(d)}$.
\end{lem}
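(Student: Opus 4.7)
The plan is to prove the key statement: for every $t_0 \in (0,d)$ and every $g \in G_{c(t_0)}$, $g$ fixes $c(t)$ for all $t \in [0,d]$. This would yield all claims at once: each $g \in G_{c(t_0)}$ lies in $G_{c(0)} \cap G_{c(d)}$ and in $G_{c(t)}$ for every $t$; applying the same statement with $t_0$ replaced by any $t_1 \in (0,d)$ then gives $G_{c(t_0)} = G_{c(t_1)}$.

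First I would construct the concatenation $\sigma : [0,d] \to \orb$ defined by $\sigma(t) = c(t)$ for $t \le t_0$ and $\sigma(t) = g \cdot c(t)$ for $t \ge t_0$. Since $g$ is an isometry fixing $c(t_0)$, $\sigma$ is well-defined, continuous, has the same length as $c$, and joins $c(0) \in G(c(0))$ to $g \cdot c(d) \in G(c(d))$. As $c$ realizes the distance between the two orbits, so does $\sigma$; hence $\sigma$ is itself a minimal geodesic, and in particular is a smooth geodesic at $c(t_0)$.

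Next I would localize around $p = c(t_0)$ by choosing a $G_p$-invariant good local chart $U_p$ and lifting $c$ to a smooth geodesic $\tilde c$ in $\tilde U_p$ with $\tilde c(t_0) = \tilde p$, where $\{\tilde p\} = \pi_p^{-1}(p)$. Proposition~\ref{P:LES} provides a Lie group $\tilde G_p$ acting on $\tilde U_p$ and projecting onto $G_p$ with kernel $\Gamma_p$; any lift $\tilde g$ of $g$ fixes $\tilde p$, and $g \cdot c$ lifts to $\tilde g \cdot \tilde c$. The smoothness of $\sigma$ at $c(t_0)$ means $\sigma$ admits a smooth geodesic lift $\tilde \sigma$ in $\tilde U_p$, which must agree with $\tilde c$ on one side of $t_0$ and with $\tilde h \cdot \tilde c$ on the other, for some lift $\tilde h$ of $g$ (any two such lifts differ by an element of $\Gamma_p$). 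Smoothness at $\tilde p$ then forces $\tilde h_{*}\tilde c\,'(t_0) = \tilde c\,'(t_0)$, so $\tilde c$ and $\tilde h \cdot \tilde c$ are geodesics in the manifold $\tilde U_p$ with identical initial data at $t_0$, and hence coincide. Projecting back down, $g$ fixes $c(t)$ for every $t$ with $c(t) \in U_p$.

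To finish, I would observe that the set $S = \{\,t \in [0,d] \mid g \cdot c(t) = c(t)\,\}$ is closed by continuity of the action, non-empty since it contains $t_0$, and, by the local argument just sketched, open in $[0,d]$ at every point of $S \cap (0,d)$. Connectedness of $(0,d)$ then forces $S \cap (0,d) = (0,d)$, and closedness of $S$ forces $S = [0,d]$. The main technical hurdle I anticipate is the step that converts minimality of $\sigma$ into the existence of a genuinely smooth geodesic lift $\tilde \sigma$ in $\tilde U_p$, that is, the orbifold analogue of the fact that a length-minimizing curve is locally a smooth geodesic, with ``smoothness'' interpreted modulo the choice of lift and the $\Gamma_p$-action; once this is cleanly formulated, uniqueness of manifold geodesics in $\tilde U_p$ closes the argument.
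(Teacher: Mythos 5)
Your proposal is correct and takes essentially the same approach as the paper, which gives no independent proof but simply asserts that Kleiner's concatenation argument carries over to complete Riemannian orbifolds: you concatenate $c|_{[0,t_0]}$ with $g\cdot c|_{[t_0,d]}$, use minimality to see the broken curve is a geodesic at $c(t_0)$, lift in a $G_p$-invariant good chart using Proposition~\ref{P:LES}, and finish by uniqueness of geodesics in $\tilde U_p$ together with an open--closed argument. The one step you flag as a hurdle---that a length-minimizing curve in a Riemannian orbifold locally admits a smooth geodesic lift---is the standard local description of Riemannian orbifolds as quotients of charts by finite isometry groups, and is precisely what the paper implicitly invokes in claiming ``the same proof'' works.
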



It turns out that the good local charts given by Lemma \ref{L:localgp} can be chosen to be compatible with the action of a Lie group.

\begin{lem}
\label{L:Isotropy dimension}
Let a compact Lie group $G$ act effectively and isometrically on a complete Riemannian orbifold $\orb$ and fix $p\in \orb$.  Then there exists a $G_p$-invariant good local chart $U_p$ around $p$ such that, for every $q \in U_p$ and every $\tilde q \in \pi_p^{-1}(q)$, there is a $G_q$-invariant good local chart $U_q \In U_p$ around $q$ and a commutative diagram
\begin{equation}\label{E:A}
\xymatrix{
\{e\}  \ar[r] & \Gamma_q \ar[r] \ar[d] & \tilde G_q \ar[r]^{\rho_q} \ar[d] & G_q \ar[r] \ar[d] & \{e\} \\
\{e\} \ar[r] & \Gamma_p \ar[r] & \tilde G_p \ar[r]^{\rho_p} & G_p \ar[r] & \{e\} 
}
\end{equation}
of short exact sequences, where the vertical maps identify $\Gamma_q$, $\tilde G_q$ and $G_q$ with the subgroups $(\Gamma_p)_{\tilde q}$, $(\tilde G_p)_{\tilde q}$ and $(G_p)_q$ respectively.  Furthermore, there is a commutative diagram
\begin{equation}\label{E:A'}
\xymatrix{
\tilde U_q \ar[r] \ar[d]_{\pi_q} & \tilde U_p \ar[d]^{\pi_p} \\
U_q \ar[r] & U_p
}
\end{equation}
where each map is equivariant with respect to the appropriate actions of the groups $\tilde G_q$, $\tilde G_p$, $G_q$ and $G_p$.
\end{lem}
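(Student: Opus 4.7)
The plan is to assemble the lemma by combining the orbifold-theoretic Lemma~\ref{L:localgp}, which supplies a good local chart around $q$ with local group $(\Gamma_p)_{\tilde q}$, with the Lie-theoretic Proposition~\ref{P:LES}, which supplies the lift $\tilde G_p$ of $G_p$ together with its defining short exact sequence. The natural candidates at $q$ are $\Gamma_q := (\Gamma_p)_{\tilde q}$, $\tilde G_q := (\tilde G_p)_{\tilde q}$ and $G_q = (G_p)_q$, and the lemma is essentially the assertion that these choices are mutually consistent and give rise at $q$ to the same kind of structure as at $p$.

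Since $G$ is compact, I would start by choosing $U_p$ to be a $G_p$-invariant good local chart around $p$; then, by Proposition~\ref{P:LES}, $\tilde G_p$ acts smoothly on $\tilde U_p$, and because $\pi_p^{-1}(p) = \{\tilde p\}$ the point $\tilde p$ is a fixed point of this action. The main obstacle is the identification $G_q = (G_p)_q$, which is equivalent to the inclusion $G_q \In G_p$ for every $q \in U_p$. I would enforce this by further shrinking $U_p$ so that $\tilde U_p$ is a small $\tilde G_p$-invariant geodesic ball around $\tilde p$ in a $\tilde G_p$-invariant Riemannian metric (obtained by averaging the orbifold metric): the exponential map at $\tilde p$ then equivariantly linearises the $\tilde G_p$-action, and the standard slice theorem for compact Lie group actions on Riemannian manifolds guarantees that the $G$-isotropy of any $q \in U_p$ sits inside $G_p$.

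Once this is arranged, for $q \in U_p$ and $\tilde q \in \pi_p^{-1}(q)$ I would take $\tilde U_q$ to be a $(\tilde G_p)_{\tilde q}$-invariant geodesic ball around $\tilde q$ (with respect to a $(\tilde G_p)_{\tilde q}$-invariant metric), small enough that $\tilde U_q \cap \gamma \tilde U_q = \emptyset$ for every $\gamma \in \Gamma_p \setminus (\Gamma_p)_{\tilde q}$; this produces a good local chart around $q$ with local group $\Gamma_q = (\Gamma_p)_{\tilde q}$ in the sense of Lemma~\ref{L:localgp}, and the chart is $G_q$-invariant because the induced $G_q$-action on $U_q$ lifts, through $\rho_p$, to the $(\tilde G_p)_{\tilde q}$-action on $\tilde U_q$. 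Setting $\tilde G_q := (\tilde G_p)_{\tilde q}$, the restriction $\rho_p|_{\tilde G_q}$ realises the short exact sequence $\{e\} \to \Gamma_q \to \tilde G_q \to G_q \to \{e\}$: it tautologically lands in $(G_p)_q = G_q$; surjectivity follows by lifting any $g \in G_q$ to some $\tilde g \in \tilde G_p$, writing $\tilde g \cdot \tilde q = \gamma \cdot \tilde q$ with $\gamma \in \Gamma_p$ (using $\pi_p^{-1}(q) = \Gamma_p \cdot \tilde q$), and replacing $\tilde g$ by $\gamma^{-1}\tilde g \in \tilde G_q$; and the kernel is $\Gamma_p \cap \tilde G_q = (\Gamma_p)_{\tilde q} = \Gamma_q$ directly from the definitions.

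Commutativity of both diagrams is then essentially formal. In \eqref{E:A} the vertical arrows are the inclusions $(\Gamma_p)_{\tilde q} \In \Gamma_p$, $(\tilde G_p)_{\tilde q} \In \tilde G_p$ and $(G_p)_q \In G_p$, and the two squares commute because $\rho_q$ is by definition the restriction of $\rho_p$; in \eqref{E:A'} the horizontal arrows are the inclusions of $\tilde U_q$ and $U_q$ into $\tilde U_p$ and $U_p$, and equivariance with respect to the pairs $\bigl((\tilde G_p)_{\tilde q} \In \tilde G_p\bigr)$ and $\bigl((G_p)_q \In G_p\bigr)$ is immediate from the fact that the smaller actions are obtained by restriction from the larger ones. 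The only non-routine step throughout is the initial slice-type shrinking that enforces $G_q \In G_p$; once that is in place, everything else is careful bookkeeping built on Lemma~\ref{L:localgp} and Proposition~\ref{P:LES}.
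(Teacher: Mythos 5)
Your bookkeeping skeleton is the same as the paper's: take $\Gamma_q=(\Gamma_p)_{\tilde q}$ via Lemma~\ref{L:localgp}, choose $\tilde U_q$ to be $(\tilde G_p)_{\tilde q}$-invariant so that $U_q$ is $G_q$-invariant, and obtain the diagrams by restriction; your explicit check that $(\tilde G_p)_{\tilde q}\to (G_p)_q$ is onto (correcting a lift $\tilde g$ by $\gamma\in\Gamma_p$ with $\tilde g\cdot\tilde q=\gamma\cdot\tilde q$) is a correct and welcome detail. The gap is exactly at the step you yourself flag as the only non-routine one, namely $G_q=(G_p)_q$, equivalently $G_q\In G_p$, for every $q\in U_p$. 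Your justification is purely local: shrink $U_p$, average the metric, and linearise the $\tilde G_p$-action on $\tilde U_p$ via $\exp_{\tilde p}$. But $U_p$ is only $G_p$-invariant, so elements of $G\setminus G_p$ do not act on it at all; whether such an element fixes some $q\in U_p$ is invisible to the $\tilde G_p$-action on $\tilde U_p$, no matter how small the chart is. The linearisation therefore only controls $(G_p)_q$, not $G_q$. If instead you mean the slice theorem for the full $G$-action along the orbit $G(p)$ (Theorem~\ref{T:slice-thm}), then for a point $q=[g,v]$ of a small invariant tube around $G(p)$ it yields $G_q=g\,(G_p)_v\,g^{-1}$, i.e.\ only that $G_q$ is \emph{conjugate} into $G_p$; this gives $G_q\In G_p$ when $q$ lies on the slice through $p$ itself or when $G$ is abelian, but a generic $q\in U_p$ need not lie on that slice, so the inference ``slice theorem $\Rightarrow G_q\In G_p$ for all $q\in U_p$'' is not valid as stated (compare $\SO(3)$ acting on $\RR^3$, where nearby points on the orbit through $p$ have isotropy conjugate to, but not inside, $G_p$).

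This is precisely where the paper uses the hypotheses you never touch, completeness of the metric and isometry of the action: $U_p$ is chosen so that all geodesics emanating from $p$ orthogonal to $G(p)$ minimise the distance to the orbit $G(p)$, and then Kleiner's Isotropy Lemma (Lemma~\ref{L:Isot_Lem}) is invoked to obtain $G_q=(G_p)_q$. So your proof needs that global geometric input (or a careful tube/foot-point argument about the full $G$-action) in place of the local linearisation; with it, the rest of your argument goes through. A smaller point: you \emph{define} $\tilde G_q:=(\tilde G_p)_{\tilde q}$, whereas the lemma asserts an identification of $(\tilde G_p)_{\tilde q}$ with the group $\tilde G_q$ of \emph{all} lifts over $\tilde U_q$ furnished by Proposition~\ref{P:LES}; one still has to check that the restriction homomorphism $(\tilde G_p)_{\tilde q}\to\tilde G_q$ is an isomorphism, which the paper does by comparing the two short exact sequences and applying the Five Lemma.
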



\begin{proof}  Given any good local chart $U_p$ around $p$, note first that, via inclusion, there is a commutative diagram
\begin{equation}\label{E:B}
\xymatrix{
\{e\}  \ar[r] & (\Gamma_p)_{\tilde q} \ar[r] \ar[d] & (\tilde G_p)_{\tilde q} \ar[r]^{\rho_p|_{(\tilde G_p)_{\tilde q}}} \ar[d] & (G_p)_q \ar[r] \ar[d] & \{e\} \\
\{e\} \ar[r] & \Gamma_p \ar[r] & \tilde G_p \ar[r]^{\rho_p} & G_p \ar[r] & \{e\} 
}
\end{equation}
for every $q \in U_p$ and every $\tilde q \in \pi_p^{-1}(q)$.

Similarly to the case of a Riemannian manifold, $U_p$ can be chosen to be $G_p$-invariant and small enough such that all geodesics emanating from $p$ and orthogonal to the orbit $G(p)$ minimize the distance from $G(p)$.  Given $q \in U_p$, Kleiner's Isotropy Lemma yields $G_q = (G_p)_q$. 

By Lemma \ref{L:localgp}, a good local chart $U_q$ around $q$ with $\tilde{U}_q\In \tilde U_p$ can be chosen such that $\Gamma_q = (\Gamma_p)_{\tilde q}$ and $\pi_q = \pi_p|_{\tilde U_q}$. Furthermore, if $\tilde U_q$ is chosen to be $(\tilde G_p)_{\tilde{q}}$-invariant, then $U_q$ is $(G_p)_q=G_q$-invariant, and the inclusion $U_q\In U_p$ is $G_q$-equivariant. Restricting the actions from $\tilde U_p$, $U_p$ to $\tilde U_q$, $U_q$ respectively, determines a commutative diagram
\begin{equation}\label{E:C}
\xymatrix{
\{e\}  \ar[r] & (\Gamma_p)_{\tilde q} \ar[r] \ar[d]^{=} & (\tilde G_p)_{\tilde q} \ar[r]^{\rho_p|_{(\tilde G_p)_{\tilde q}}} \ar[d] & (G_p)_q \ar[r] \ar[d]^{=} & \{e\} \\
\{e\} \ar[r] & \Gamma_q \ar[r] & \tilde G_q \ar[r]^{\rho_q} & G_q \ar[r] & \{e\} 
}
\end{equation}
where the horizontal arrows are exact. By the Five Lemma, the map $(\tilde G_p)_{\tilde q}\to G_q$ is an isomorphism. Combining the diagrams \eqref{E:B} and \eqref{E:C} yields the diagram in \eqref{E:A}. The equivariance and commutativity of the diagram \eqref{E:A'} follows by construction.
\end{proof}


Recall from Lemma \ref{L:strong} that orbits of Lie group actions are strong suborbifolds.  Using the notation developed in Section \ref{S:Setup}, there is a version of the Slice Theorem for orbifolds (for a proof, see, for example, \cite{Ye}).

\begin{thm}[Slice theorem] 
\label{T:slice-thm}
Suppose that a compact Lie group $G$ acts on an orientable orbifold $\orb$ equipped with a $G$-invariant, orbifold-Riemannian metric, and let $G(p)$ be the orbit of $G$ through $p\in \orb$. 
Then a $G$-invariant neighbourhood of $G(p)$ is equivariantly diffeomorphic to
\[
G \times_{G_p} \left(\tilde{\nu}_p G(p)/\Gamma_p\right)
\]
and, by Proposition \ref{P:LES}, this is equivariantly diffeomorphic to
\[
G\times_{\tilde{G}_p}\tilde{\nu}_p G(p).
\]
\end{thm}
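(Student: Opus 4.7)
The plan is to reduce to the classical slice theorem for smooth manifolds, applied in the lifted chart $\tilde U_p$ where $\tilde G_p$ acts smoothly with fixed point $\tilde p$, and then descend by the finite group $\Gamma_p$. First, I would choose a $G_p$-invariant good local chart $(U_p,\tilde U_p,\Gamma_p,\pi_p)$ around $p$ and invoke Proposition~\ref{P:LES} to obtain the compact Lie group $\tilde G_p$ acting smoothly on $\tilde U_p$ and lifting the $G_p$-action on $U_p$. Since $\pi_p^{-1}(p)=\{\tilde p\}$ and any lift of an element of $G_p$ must preserve this singleton, $\tilde G_p$ fixes $\tilde p$. The pullback of the $G$-invariant orbifold metric to $\tilde U_p$ is the unique $\Gamma_p$-invariant Riemannian metric making $\pi_p$ a local isometry, and by this uniqueness it is automatically $\tilde G_p$-invariant. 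By Lemma~\ref{L:strong}, the orbit $G(p)$ is a strong suborbifold, so $\pi_p^{-1}(G(p)\cap U_p)\subset \tilde U_p$ is a $\tilde G_p$-invariant smooth submanifold through $\tilde p$, yielding a $\tilde G_p$-invariant orthogonal decomposition $\tilde T_p \tilde U_p = \tilde T_p G(p) \oplus \tilde \nu_p G(p)$.

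Next, I would construct the candidate tubular map using the Riemannian exponential $\exp_{\tilde p}:\tilde \nu_p G(p)\to \tilde U_p$, which is $\tilde G_p$-equivariant since $\tilde p$ is fixed and $\tilde G_p$ acts isometrically. Choose a $\tilde G_p$-invariant neighbourhood $V\subset \tilde\nu_p G(p)$ of the origin on which $\exp_{\tilde p}$ is a diffeomorphism, and let $\tilde G_p$ act on $G$ via the composition $\tilde G_p \xrightarrow{\rho_p} G_p \hookrightarrow G$ by right translation. Define
\[
\Phi : G\times_{\tilde G_p} V \longrightarrow \orb,\qquad [g,v]\longmapsto g\cdot \pi_p\bigl(\exp_{\tilde p}(v)\bigr).
\]
Well-definedness and $G$-equivariance follow from the identity $\pi_p\circ \tilde h = \rho_p(\tilde h)\cdot \pi_p$ for all $\tilde h\in \tilde G_p$ and the equivariance of $\exp_{\tilde p}$. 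The zero section is carried equivariantly to $G(p)\cong G/G_p$, and at $[e,0]$ the differential of $\Phi$, computed in the manifold $\tilde U_p$, is the identity with respect to the splitting $\tilde T_p G(p) \oplus \tilde \nu_p G(p) = \tilde T_p \tilde U_p$. By $G$-equivariance, $\Phi$ is a local diffeomorphism along the entire zero section. A standard tubular-neighbourhood argument, using compactness of $G(p)$ and shrinking $V$ further if necessary, then upgrades $\Phi$ to a $G$-equivariant diffeomorphism onto a $G$-invariant neighbourhood of $G(p)$, giving the second formula in the statement.

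Finally, the identification with $G\times_{G_p}\bigl(\tilde\nu_p G(p)/\Gamma_p\bigr)$ is purely group-theoretic: the short exact sequence $\{e\}\to \Gamma_p \to \tilde G_p \xrightarrow{\rho_p} G_p \to \{e\}$ of Proposition~\ref{P:LES} yields a well-defined $G$-equivariant bijection
\[
[g,v]_{\tilde G_p}\;\longleftrightarrow\; \bigl[g,[v]_{\Gamma_p}\bigr]_{G_p},
\]
since $\Gamma_p$ is the kernel of $\rho_p$ and so the $\Gamma_p$-class of $v$ depends only on the $\rho_p$-class of the chosen lift; smoothness and smooth invertibility follow from the fact that locally any $g\in G_p$ admits a smooth section into $\tilde G_p$.

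The main obstacle is not conceptual but organizational: keeping track of the simultaneous actions of $G$, $G_p$, $\tilde G_p$, and $\Gamma_p$ and verifying that $\Phi$ is globally injective on a neighbourhood of the zero section. The differential computation is routine, because everything happens in the genuine manifold $\tilde U_p$, where the classical Riemannian slice theorem applies verbatim to the $\tilde G_p$-action; the orbifold content is then entirely absorbed into the descent by the finite normal subgroup $\Gamma_p\vartriangleleft \tilde G_p$.
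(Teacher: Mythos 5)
The paper offers no proof of this theorem at all: it defers to Yeroshkin's thesis \cite{Ye}, so there is nothing in the text to compare against line by line. Your strategy is the standard one and is sound in outline: linearize at the point $\tilde p$ fixed by $\tilde G_p$ inside the chart $\tilde U_p$, sweep the resulting slice around the orbit by the $G$-action via $\Phi([g,v])=g\cdot\pi_p(\exp_{\tilde p}(v))$, and pass between $G\times_{\tilde G_p}\tilde{\nu}_pG(p)$ and $G\times_{G_p}\bigl(\tilde{\nu}_pG(p)/\Gamma_p\bigr)$ using the exact sequence of Proposition \ref{P:LES}; the well-definedness, equivariance, and final identification are handled correctly.

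The step that is not yet a proof is the local-diffeomorphism claim. You assert that the differential of $\Phi$ at $[e,0]$, ``computed in the manifold $\tilde U_p$'', is the identity, but as written this is not defined: both $G\times_{\tilde G_p}V$ and $\orb$ are orbifolds (with local group $\Gamma_p$ at the relevant points), and for $g\in G$ near $e$ but outside $G_p$ the expression $g\cdot\pi_p(\exp_{\tilde p}(v))$ has no canonical lift to $\tilde U_p$, since such $g$ need not preserve $U_p$ and $\tilde G_p$ only lifts $G_p$. To repair this you must work in charts of the associated bundle, e.g. $(S\times V,\Gamma_p)$ with $S\In G$ a local transversal to $G_p$ at $e$, and invoke the fact that the action $G\times\orb\to\orb$ is a smooth orbifold map, hence admits a smooth local lift near $(e,\tilde p)$; the resulting lift $\tilde\Phi(s,v)$ of $\Phi$ has invertible differential at $(e,0)$ (its $S$-derivative parametrizes $\pi_p^{-1}(G(p)\cap U_p)$, using Lemma \ref{L:strong}, and its $V$-derivative is the identity on $\tilde{\nu}_pG(p)$), and one must also check compatibility of $\tilde\Phi$ with the $\Gamma_p$-actions on both sides to conclude an orbifold, not merely topological, local diffeomorphism before running the compactness/injectivity argument. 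Relatedly, the remark that the classical Riemannian slice theorem ``applies verbatim to the $\tilde G_p$-action'' on $\tilde U_p$ overstates the reduction: the $\tilde G_p$-orbit of $\tilde p$ is a single point, so the classical theorem only yields the linearization at $\tilde p$; the global tube around $G(p)$ still requires the equivariant sweeping with exactly the orbifold bookkeeping just described, which is the genuine content of the theorem in this setting. These points are standard and fixable, but they are the part your sketch leaves out.
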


 
\section{Torus orbifolds}
\label{S:TORBIFOLDS}


\begin{defn}
A pair $(\orb^{2n},T^n)$, $n\geq 1$, is a \emph{torus orbifold} if $\orb^{2n}$ is a $2n$-dimensional, closed, oriented orbifold on which the $n$-dimensional torus $T^n$ acts smoothly and effectively with non-empty fixed-point set. 
\end{defn}

To avoid confusion, henceforth the notation $G=T^n$ will be used.  The identity component of a subgroup $K \In G$ will be denoted by $K^o$.  If the action is clear from the context, a torus orbifold $(\orb,G)$ will be denoted simply by $\orb$.  It will always be assumed that $\orb$ is equipped with an invariant orbifold-Riemannian metric (cf. \cite{Bre}).


\begin{defn} 
\label{D:Strata}
Let $\orb$ be a torus orbifold and let $p\in \orb$. The \emph{stratum} containing $p$, which will be denoted by $\Sigma_p$, is the connected component of the set
\[
\{ q\in \orb \ | \ G_q^o=G_p^o\textrm{ as subgroups of }G\}
\]
which contains $p$. The projection $\overline{\Sigma}_p/G\In \orb/G$ of the closure, $\overline{\Sigma}_p$, of a stratum $\Sigma_p$ is called an \emph{(orbifold) face} of $\orb/G$.  A one-dimensional face of $\orb/G$ is called an \emph{edge}.
\end{defn}

It follows from Definition \ref{D:Strata} that the closure $\overline{\Sigma}_p$ of the stratum containing $p$ is a connected component of the fixed-point set $\orb^{G_p^o}$ and hence, by Corollary \ref{C:fixed-pt}, a strong suborbifold of $\orb$.

Note that the identity component $G_p^o$ of an isotropy group $G_p$ is a connected, compact, abelian Lie group, hence a torus. In particular, $\tilde{G}^o_p$ acts effectively on $\tilde{U}_p\cong \mathbb{R}^{2n}$. This fact implies the following lemma. 


\begin{lem}\label{L:fixed points isolated}
The fixed-point set of a torus orbifold $(\orb,G)$ consists of finitely many isolated points.  Hence $H^{\odd}(\orb; \Q) = 0$ if $\orb$ is simply connected and rationally elliptic.
\end{lem}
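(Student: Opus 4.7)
The plan is to address the two claims in order. For the finiteness of $\orb^G$, I fix $p\in\orb^G$ and a $G$-invariant good local chart $U_p$ around $p$. Since $G_p=G=T^n$, Proposition \ref{P:LES} provides a compact Lie group $\tilde G_p$ acting effectively on $\tilde U_p\cong\RR^{2n}$ and fitting into an extension
\[
1\longrightarrow \Gamma_p \longrightarrow \tilde G_p \longrightarrow T^n \longrightarrow 1
\]
with $\Gamma_p$ finite. The identity component $\tilde G_p^o$ surjects onto $T^n$ with finite kernel, and is abelian since any commutator projects to the identity of $T^n$ while lying in the discrete kernel; hence $\tilde G_p^o$ is itself an $n$-torus. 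As $\pi_p^{-1}(p)=\{\tilde p\}$ is fixed by $\tilde G_p$, Bochner's linearization theorem allows one to assume that the $\tilde G_p^o$-action near $\tilde p$ is linear. Faithfulness then forces the weight decomposition on $\RR^{2n}$ to consist of $n$ two-dimensional summands with linearly independent (in particular nonzero) weights, so $\tilde p$ is isolated in $\tilde U_p^{\tilde G_p^o}$. Running the argument from the proof of Corollary \ref{C:fixed-pt} gives $\pi_p^{-1}(\orb^G\cap U_p)=\tilde U_p^{\tilde G_p^o}=\{\tilde p\}$ after shrinking $U_p$, whence $\orb^G\cap U_p=\{p\}$. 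Compactness of $\orb$ then yields finiteness of $\orb^G$.

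For the cohomological statement, my strategy is to apply the torus localization identity $\chi(\orb)=\chi(\orb^G)$. Since the underlying space $|\orb|$ admits a $G$-equivariant triangulation (a standard fact for smooth compact Lie group actions on orbifolds), this reduces to the usual $G$-CW version and gives $\chi(\orb)=|\orb^G|>0$, as $\orb^G$ is nonempty by definition of a torus orbifold. On the other hand, the Friedlander--Halperin dichotomy (see \cite[Proposition~32.10]{FHT}) asserts that a simply-connected, rationally-elliptic space $X$ satisfies $\chi(X)\geq 0$, with strict positivity equivalent to $H^{\odd}(X;\QQ)=0$. Applying this to $\orb$ yields the desired vanishing.

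The only nontrivial obstacle I anticipate is a small technicality rather than a genuine difficulty: justifying the Euler-characteristic localization in the orbifold setting. Once an equivariant CW structure on $|\orb|$ compatible with the $G$-action is quoted, the remainder of the argument is entirely formal.
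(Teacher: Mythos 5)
Your proposal is correct and follows essentially the same route as the paper: an effective action of the $n$-torus (the identity component of the lifted group $\tilde G_p$) on the $2n$-dimensional local model forces the fixed point to be isolated, and then the localization identity $\chi(\orb)=\chi(\orb^G)>0$ combined with the elliptic dichotomy of F\'elix--Halperin gives $H^{\odd}(\orb;\Q)=0$. The only cosmetic difference is that you linearize via Bochner and argue with weights, whereas the paper uses the $\tilde G$-invariant splitting $\tilde T_p U_p=\tilde T_p\orb^G\oplus\tilde\nu_p\orb^G$ coming from Corollary \ref{C:fixed-pt} and a dimension count ($\dim\tilde\nu_p\orb^G\geq 2n$); these are the same argument in substance.
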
 


\begin{proof}
Let $\orb^G$ be the fixed-point set of the $G$ action, let $p\in \orb^G$, and let $U_p$ be a $G$-invariant good local chart around $p$.  By Corollary \ref{C:fixed-pt}, each component of $\orb^G$ is a strong suborbifold.  From the discussion in Section \ref{S:Setup}, there is a splitting $\tilde T_p U_p = \tilde T_p \orb^G \oplus \tilde \nu_p \orb^G$ and this splitting is $\tilde G$ invariant.  Since the identity component $\tilde G^o$ acts trivially on $\tilde T_p \orb^G$, it must act linearly and effectively on $\tilde \nu_p \orb^G$.  As $\tilde G^o$ is an $n$-dimensional torus, it follows that $\dim \tilde \nu_p \orb^G \geq 2n$.  Since $\dim \tilde T_p U_p = 2n$, one obtains $\dim \tilde T_p \orb^G = 0$.  Therefore, the components of $\orb^G$ are points and, since $\orb$ is compact, $\orb^G$ must be finite.

It now follows from the Euler characteristic identity $\chi(\orb^G) = \chi(\orb)$ (cf. \cite[p. 163]{Bre}, \cite{Kob}) that $\chi (\orb) > 0$.  Whenever $\orb$ is also simply connected and rationally elliptic, this is equivalent to $H^{\odd}(\orb;\Q) = 0$ \cite[p. 444]{FHT}.
\end{proof}


\begin{lem}
\label{L:Standard_rep} Given a torus orbifold $(\orb, G)$, suppose that $p \in \orb^G$ and $U_p$ is a $G$-invariant good local chart around $p$. Then:
\begin{enumerate}
\item 
\label{L1}
The action of $G$ on $U_p$ lifts to an action of $\tilde{G}$ on $\tilde{U}_p$ such that the isotropy action $\tilde{G}^o\times \tilde T_p U_p\rightarrow  \tilde T_p U_p$ is equivalent to the standard $n$-torus action on $\mathbb{C}^n$. 
\item 
\label{L2}
$\Gamma_p$ is a subgroup of $\tilde{G}^o$. In particular, $\tilde{G}=\tilde{G}^o$, i.e. $\tilde{G}$ is connected, hence a torus.
\end{enumerate}
\end{lem}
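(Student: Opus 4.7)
The plan is to first invoke Proposition \ref{P:LES} to lift the $G$-action on $U_p$ to a smooth, effective action of $\tilde{G} := \tilde{G}_p$ on $\tilde{U}_p$, fitting into the short exact sequence
\[
\{e\} \to \Gamma_p \to \tilde{G} \to T^n \to \{e\}.
\]
Since $p \in \orb^G$ and $U_p$ is a good local chart, the fibre $\pi_p^{-1}(p) = \{\tilde{p}\}$ consists of a single point, and every lift must preserve this fibre; hence $\tilde{G}$ fixes $\tilde{p}$ and determines a linear isotropy representation $\tilde{G} \to \GL(\tilde{T}_p U_p) \cong \GL(2n, \RR)$. I would then argue that this representation is faithful: averaging produces a $\tilde{G}$-invariant Riemannian metric on $\tilde{U}_p$, and any $\tilde{g} \in \tilde{G}$ with identity differential at $\tilde{p}$ is an isometry fixing $\tilde{p}$ with trivial derivative, hence acts as the identity on a neighbourhood of $\tilde{p}$ via equivariance of the exponential map; a standard open-and-closed argument using connectedness of $\tilde{U}_p$ then spreads this to all of $\tilde{U}_p$.

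Next, the restricted map $\tilde{G}^o \to T^n$ is surjective (the image is an open, hence full, subgroup of the connected group $T^n$) with kernel contained in the finite group $\Gamma_p$, so $\tilde{G}^o$ is a compact connected Lie group with abelian Lie algebra, and therefore an $n$-torus. Its faithful linear action on $\RR^{2n}$ decomposes into real weight summands indexed by characters $\chi_1,\dots,\chi_k$, and effectiveness is equivalent to the $\chi_i$ generating the full character lattice $\ZZ^n$. A dimension count gives $k \leq n$, while generation of $\ZZ^n$ demands $k \geq n$; hence $k = n$, the trivial summand vanishes, and $\chi_1, \ldots, \chi_n$ necessarily form a $\ZZ$-basis (a surjective endomorphism of $\ZZ^n$ is an isomorphism). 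After the dual change of basis on $\tilde{G}^o$, the action is equivalent to the standard $T^n$-action on $\CC^n$, establishing~(1).

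For~(2), Corollary \ref{C:commute} yields that $\Gamma_p$ commutes with $\tilde{G}^o$. Transported through the faithful isotropy representation identified in~(1), $\Gamma_p$ lies in the centralizer of the standard $T^n$ inside $\GL(2n, \RR)$, which is the diagonal group $(\CC^\ast)^n$ acting componentwise on $\CC^n$. As $\Gamma_p$ is finite, it must lie in the unique maximal compact subgroup of $(\CC^\ast)^n$, namely $(\sph^1)^n$; but this is precisely the image of $\tilde{G}^o$, so $\Gamma_p \In \tilde{G}^o$. Since $\tilde{G}/\Gamma_p \cong T^n \cong \tilde{G}^o/\Gamma_p$, one concludes $\tilde{G} = \tilde{G}^o$. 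The main obstacle I anticipate is securing faithfulness of the isotropy representation of the full group $\tilde{G}$ (and not merely of $\tilde{G}^o$), as this is the one step where effectiveness of the abstract orbifold action must be converted into a finite-dimensional linear-algebra statement; once this is in hand, the rest reduces to the classical linear representation theory of tori.
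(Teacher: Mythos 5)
Your proposal is correct and takes essentially the same route as the paper: part (1) rests on the $n$-torus $\tilde G^o$ acting linearly and effectively on the $2n$-dimensional tangent space at the fixed point (you prove the resulting standard form via the weight decomposition, where the paper simply cites the uniqueness of effective linear $n$-torus actions on $\CC^n$), and part (2) combines Corollary \ref{C:commute} with a centralizer computation. The only minor variation is that you compute the centralizer of the standard torus inside $\GL(2n,\RR)$, obtaining $(\CC^\ast)^n$ and then its maximal compact $(\sph^1)^n$, whereas the paper works inside $\SO(2n)$ and uses that a maximal torus is its own centralizer there; both are fine, and your version has the small bonus of not needing that $\Gamma_p$ acts orientation-preservingly.
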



\begin{proof}
(\ref{L1}) By Lemma \ref{L:fixed points isolated}, the $n$-torus $\tilde{G}^o$ acts effectively on $\tilde T_p U_p\cong\mathbb{C}^{n}$. As there is a unique (up to equivariant diffeomorphism) linear, effective $n$-torus action on $\sph^{2n-1}\In \mathbb{C}^{n}$, i.e. the action of the maximal torus in $\U(n)$, it follows that the $n$-torus $\tilde{G}^o$ acts on $\tilde T_p U_p\cong\mathbb{C}^{n}$ in the standard way. 

(\ref{L2}) Since $\tilde T_p U_p$ can be equipped with a $\tilde{G}$-invariant metric, $\Gamma_p$ and $\tilde{G}^o$ may be considered as subgroups of $\SO(2n)$. By (\ref{L1}), $\tilde{G}^o$ is the maximal torus of $\U(n) \In \SO(2n)$ and, by Corollary \ref{C:commute}, $\Gamma_p$ commutes with $\tilde{G}^o$.  Then $\Gamma_p\In \tilde{G}^o$, since the centralizer of a maximal torus  in $\SO(2n)$ is the maximal torus itself.
\end{proof}


\begin{cor}
\label{C:Corner}
Let $\orb^{2n}$ be a $2n$-dimensional torus orbifold. Fix $p\in \orb^G$ and let $U_p$ be a $G$-invariant good local chart around $p$. Then $U_p/G= \tilde{U}_p/\tilde{G}$ is face-preserving diffeomorphic to $\RR_+^{n} = \{(x_1,\ldots,x_n) \in \RR^n \mid x_i \geq 0, \ i=1,\ldots,n \}$.
\end{cor}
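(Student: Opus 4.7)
The plan is to linearize the lifted $\tilde G$-action on $\tilde U_p$ near the fixed point $\tilde p = \pi_p^{-1}(p)$ and then compute the orbit space of the resulting standard linear torus action explicitly.

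First, Lemma \ref{L:Standard_rep} gives that $\tilde G$ is an $n$-dimensional torus acting smoothly and effectively on the $2n$-manifold $\tilde U_p$, with $\tilde p$ an isolated fixed point whose isotropy representation on $\tilde T_{\tilde p}\tilde U_p \cong \CC^n$ is the standard $T^n$-representation. Applying Bochner's equivariant linearization theorem---equivalently, the smooth slice theorem for the $\tilde G$-action on the manifold $\tilde U_p$ at the fixed point $\tilde p$---then produces a $\tilde G$-invariant neighborhood of $\tilde p$ that is $\tilde G$-equivariantly diffeomorphic to an open ball $B_r(0) \In \CC^n$ carrying the standard linear $T^n$-action. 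Replacing $U_p$ by the $\pi_p$-image of this neighborhood yields a smaller $G$-invariant good local chart around $p$, so one may assume $\tilde U_p = B_r(0)$ with $\tilde G$ acting as the standard $T^n$, and by Proposition \ref{P:LES} this gives $U_p/G = \tilde U_p/\tilde G$.

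Now the orbit map of the standard $T^n$-action on $\CC^n$ is $\mu\colon \CC^n \to \RR_+^n$, $(z_1, \ldots, z_n) \mapsto (|z_1|^2, \ldots, |z_n|^2)$, which restricts to a bijection of $B_r(0)/T^n$ onto the open simplicial region $W = \{x \in \RR_+^n \mid x_1 + \cdots + x_n < r^2\}$ that is smooth as a map of manifolds with corners. A radial reparametrization such as $x_i \mapsto x_i/(r^2 - \sum_j x_j)$ provides a face-preserving diffeomorphism $W \to \RR_+^n$. Face-preservation in the sense of Definition \ref{D:Strata} then follows from the fact that the isotropy of the standard $T^n$-action at $(z_1, \ldots, z_n)$ depends only on the set $I = \{i : z_i = 0\}$, so strata are indexed by subsets $I \In \{1, \ldots, n\}$ and map under $\mu$ onto the relative interiors of the coordinate faces of $W$, which the reparametrization sends to the coordinate faces of $\RR_+^n$. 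I expect no genuine obstacle: the linearization step carries all of the substantive content, while the passage from the linear model to $\RR_+^n$ is essentially the standard moment-map description of $\CC^n/T^n$.
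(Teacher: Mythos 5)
Your argument is correct and follows essentially the route the paper intends: the corollary is stated as an immediate consequence of Lemma \ref{L:Standard_rep}, with the implicit proof being precisely that the $\tilde G$-action is linearizable at the fixed point with standard isotropy representation on $\CC^n$, whose orbit space under the standard $T^n$-action is $\RR_+^n$ via the moment-type map $(z_1,\ldots,z_n)\mapsto(|z_1|^2,\ldots,|z_n|^2)$. Your extra steps (Bochner linearization, shrinking the chart, the explicit reparametrization of the truncated cone onto $\RR_+^n$, and matching strata via vanishing coordinate sets) just make explicit what the paper leaves tacit, and the shrinking of $U_p$ is harmless for the way the corollary is used.
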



\begin{lem} [cf. {\cite[Lemma~2.2] {MaPa}}]
\label{L:Masuda_Panov_orb} Let $\orb$ be a closed $n$-orbifold with a smooth, effective action by a $k$-torus $G$, $k\leq n$.  Let $H\In G$ be a subtorus and $\mc{N}\In \orb^H$ a connected component of its fixed-point set. If $H^{\mathrm{odd}}(\orb;\Q)=0$, then $H^{\mathrm{odd}}(\mc{N};\Q)=0$ and $\mc{N}^G\neq \emptyset$ (i.e. $\mc{N}\cap \orb^G\neq \emptyset$).
\end{lem}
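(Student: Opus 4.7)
The plan is to mimic the standard equivariant-cohomology proof of the manifold analogue due to Masuda and Panov, adapting the Borel-construction machinery to the orbifold setting (which is no obstacle over $\Q$, since the rational cohomology of an orbifold and of its underlying space coincide, and $\orb_G = EG \times_G \orb$ makes sense for any $G$-space).

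First, I would establish equivariant formality of the $G$-action on $\orb$. Consider the fibration $\orb \hookrightarrow \orb_G \to BG$ and its rational Leray-Serre spectral sequence. Since $G$ is a torus, $H^{\odd}(BG;\Q) = 0$, and by hypothesis $H^{\odd}(\orb;\Q) = 0$. Thus the $E_2$-page is concentrated in even total degree, forcing all differentials to vanish. Hence $H^*_G(\orb;\Q) \cong H^*(\orb;\Q) \otimes H^*(BG;\Q)$ as $H^*(BG;\Q)$-modules, i.e.\ $\orb$ is $G$-equivariantly formal. Next, I would invoke the Borel localization theorem for torus actions, which in the orbifold setting follows from the same spectral-sequence argument as for manifolds (using the finiteness of orbit types for a compact-Lie group action on a compact orbifold, guaranteed by the Slice Theorem \ref{T:slice-thm}): the restriction $H^*_G(\orb;\Q) \to H^*_G(\orb^H;\Q)$ becomes an isomorphism after inverting the non-zero elements of $H^*(BG;\Q)$. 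Combined with equivariant formality, a standard argument (e.g.\ the Chang-Skjelbred lemma, or a direct induction reducing to the case $H \cong S^1$) yields injectivity of this restriction and forces equivariant formality of $\orb^H$, so that $H^{\odd}(\orb^H;\Q) = 0$. In particular, the same holds for the connected component $\mc{N}$.

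For the statement $\mc{N}^G \neq \emptyset$, I would argue via Euler characteristics. Since $H$ fixes $\mc{N}$ pointwise, the $G$-action on $\mc{N}$ factors through $G/H$, and $\mc{N}^G = \mc{N}^{G/H}$. The Euler characteristic identity for torus actions on compact orbifolds, already cited in the proof of Lemma \ref{L:fixed points isolated}, gives $\chi(\mc{N}) = \chi(\mc{N}^{G/H}) = \chi(\mc{N}^G)$. On the other hand, $\mc{N}$ is compact and non-empty with $H^{\odd}(\mc{N};\Q) = 0$, so $\chi(\mc{N}) = \sum_i \dim_{\Q} H^i(\mc{N};\Q) \geq 1 > 0$. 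Therefore $\mc{N}^G \neq \emptyset$.

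The main obstacle, really a technical one, is verifying that the Borel-construction tools (equivariant formality criterion, localization, Chang-Skjelbred) extend from manifolds to orbifolds. Over $\Q$ this is essentially automatic: $H^*_G(\orb;\Q)$ can be computed either from the homotopy-theoretic Borel construction on $|\orb|$ or, equivalently, from $G$-invariant differential forms on any manifold presentation, and the orbit-type stratification of a compact torus action on $\orb$ is finite, which is the only additional input beyond the purely homological arguments used in the manifold case.
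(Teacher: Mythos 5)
Your overall strategy is sound and close in spirit to the paper's: the second half (Euler characteristics) is exactly the paper's argument, and the first half rests on the same machinery — rational Borel-construction/localization results, which apply to compact $G$-spaces with finitely many orbit types and hence to orbifolds. The paper packages the first half differently: using finiteness of orbit types it first replaces $H$ by a circle $K \In H$ with $\orb^K = \orb^H$ (citing \cite[Lemma 4.2.1]{AP}) and then quotes the odd-Betti-number inequality for circle actions, $\sum_i \dim_\Q H^{2i+1}(\orb^K;\Q) \leq \sum_i \dim_\Q H^{2i+1}(\orb;\Q)$ (\cite[Lemma 3.10.13]{AP}), which immediately gives $H^{\odd}(\orb^H;\Q)=0$ without ever discussing equivariant formality of $\orb$ explicitly.

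There is, however, one step in your write-up that is misstated and, as written, does not deliver the conclusion. Inverting \emph{all} non-zero elements of $H^*(BG;\Q)$ localizes both $H^*_G(\orb;\Q)$ and $H^*_G(\orb^H;\Q)$ onto (the localization of) $H^*_G(\orb^G;\Q)$; so the map you describe is indeed an isomorphism after that localization, but it only sees $\orb^G$ and says nothing about $H^*(\orb^H;\Q)$, since $G$ does not act trivially on $\orb^H$ and hence $H^*_G(\orb^H;\Q)$ is not $H^*(\orb^H;\Q)\otimes H^*(BG;\Q)$. To make your route work you should either (i) pass to $H$-equivariant cohomology: the same spectral-sequence degeneration (using $H^{\odd}(BH;\Q)=0$) shows $\orb$ is $H$-equivariantly formal, and Borel localization for the $H$-action, inverting the non-zero elements of $H^*(BH;\Q)$, gives a parity-preserving isomorphism between the localizations of $H^*(\orb;\Q)\otimes H^*(BH;\Q)$ and $H^*(\orb^H;\Q)\otimes H^*(BH;\Q)$, whence $H^{\odd}(\orb^H;\Q)=0$; or (ii) follow the paper's reduction to a circle $K\In H$ with $\orb^K=\orb^H$. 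Your passing mention of ``induction reducing to the case $H\cong S^1$'' is essentially option (ii), so the gap is readily repaired; the reference to the Chang--Skjelbred lemma, on the other hand, is not relevant to this parity statement. The remainder of your argument ($\chi(\mc{N})\geq 1$, $\chi(\mc{N}^G)=\chi(\mc{N})$, hence $\mc{N}^G\neq\emptyset$) is correct and coincides with the paper's.
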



\begin{proof}
As $H$ acts on $\orb$ with only finitely many orbit types, it follows from \cite[Lemma 4.2.1]{AP} that there is a circle subgroup $K \In H$ with $\orb^K = \orb^H$.  The pair $(\orb, \emptyset)$ satisfies the hypotheses of \cite[Lemma 3.10.13]{AP}, hence 
\[
\sum_{i=0}^\infty \dim_\Q H^{2i+1}(\orb^K;\QQ) \leq  \sum_{i=0}^\infty \dim_\Q H^{2i+1}(\orb;\QQ). 
\]
Now $H^{\mathrm{odd}}(\orb;\Q)=0$ implies that 
\[
H^{\odd}(\orb^H;\QQ) = H^{\odd}(\orb^K;\QQ)=0.
\]
Therefore $H^{\odd}(\mc{N};\QQ)=0$ and, consequently, $\chi(\mc{N})>0$ for all connected components $\mc{N} \In \orb^H$. As $G$ is abelian, each connected component $\mc{N} \In \orb^H$ admits an action of $G$.  The fixed-point set $\mc{N}^G = \mc{N} \cap \orb^G \In \orb^G$ of this action must be non-empty, since $\chi(\mc{N}^G)  = \chi(\mc{N}) > 0$.
\end{proof}



\begin{prop}\label{P:right-codimension}
Let $\orb^{2n}$ be a torus orbifold with $H^{\mathrm{odd}}(\orb;\Q)=0$.  Fix $p\in \orb^{2n}$ and 
let $\overline \Sigma_p$ be the closure of the stratum $\Sigma_p$ in $\orb$.  Then:
\begin{enumerate}
\item $\ol\Sigma_p$ is a codimension-$(2\, \dim G_p)$ torus orbifold with $H^{\odd}(\overline \Sigma_p; \Q) = 0$.
\item The linear, effective action of $\tilde G_p^o$ on $\tilde T_p U_p = \tilde T_p \ol\Sigma_p \oplus \tilde\nu_p \ol\Sigma_p$ is trivial on the first summand and equivalent to the standard $(\dim G_p)$-torus action on $\CC^{\dim G_p}$ on the second.
\end{enumerate}
\end{prop}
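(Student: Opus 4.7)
The plan is to exhibit $\ol\Sigma_p$ as a strong suborbifold carrying a residual $G$-action with a fixed point, compute its codimension via the standard local model at such a fixed point, and then deduce the structure of the isotropy representation at $p$.

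By the remark following Definition \ref{D:Strata}, $\ol\Sigma_p$ is a connected component of $\orb^{G_p^o}$; since $G_p^o$ is connected, Corollary \ref{C:fixed-pt} identifies it as a strong suborbifold of $\orb$. As $G$ is connected and commutes with $G_p^o$, it preserves $\ol\Sigma_p$, and applying Lemma \ref{L:Masuda_Panov_orb} with $H=G_p^o$ and $\mc{N}=\ol\Sigma_p$ simultaneously yields the cohomology vanishing $H^{\odd}(\ol\Sigma_p;\Q)=0$ and a point $q\in\ol\Sigma_p\cap\orb^G$.

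Set $k:=\dim G_p$. By Lemma \ref{L:Standard_rep}, $\tilde G_q=\tilde G^o\cong T^n$ acts on $\tilde U_q\cong \CC^n$ as the standard representation, with weights $e_1,\dots,e_n$, and Lemma \ref{L:Isotropy dimension} realises $\tilde H:=\tilde G_p^o$ as a connected $k$-subtorus of $T^n$. Its fixed-point set $V\In\CC^n$ is automatically a coordinate subspace, and the component of $\orb^{G_p^o}$ through $q$ corresponds locally to $\pi_q(V)$. To compute $\dim V$, choose $p'\in\Sigma_p$ close enough to $q$ to lie in $U_q$, which is possible as $\Sigma_p$ is dense in $\ol\Sigma_p$. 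The $\tilde G_q$-isotropy of a lift $\tilde p'\in V$ is the coordinate subtorus $T_J^{\perp}:=\{g\in T^n : g_j=1 \text{ for all } j\in J\}$, where $J:=\{i:\tilde p'_i\neq 0\}$, so $\tilde G_{p'}^o=T_J^{\perp}$ has dimension $n-|J|$. Since $p'\in\Sigma_p$ forces $\dim G_{p'}^o=k$, it follows that $|J|=n-k$. Both $\tilde H$ and $T_J^{\perp}$ are connected $k$-subtori of $T^n$ projecting via $\rho_q$ onto the common subgroup $G_p^o$, hence both coincide with the identity component of $\rho_q^{-1}(G_p^o)$; in particular, $\tilde H = T_J^{\perp}$, whose fixed subspace $V$ has complex dimension $n-k$. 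This yields $\codim \ol\Sigma_p = 2k = 2\dim G_p$. A parallel argument at $q$ shows the kernel of the $G$-action on $\ol\Sigma_p$ is exactly $G_p^o$, so $G/G_p^o \cong T^{n-k}$ acts effectively with $q$ fixed; orientability of $\ol\Sigma_p$ follows from the complex structure on its normal bundle, completing the torus orbifold structure required for (1).

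For (2), the isotropy representation of $\tilde G_p^o\cong T^k$ on $\tilde T_p U_p$ is linear and effective, trivial on $\tilde T_p\ol\Sigma_p$ (as $\ol\Sigma_p\In\orb^{G_p^o}$), hence effective on $\tilde \nu_p\ol\Sigma_p\cong \CC^k$ by the codimension calculation just established. The uniqueness argument used in the proof of Lemma \ref{L:Standard_rep}, namely that $T^k$ is the maximal torus of $\U(k)$ and so any effective linear action of a $k$-torus on $\CC^k$ must be standard, then gives the desired equivalence. The main obstacle is the codimension computation: effectiveness alone only furnishes $\codim \ol\Sigma_p\geq 2\dim G_p$, and the reverse inequality requires the density argument above to identify $\tilde H$ with a coordinate subtorus of $T^n$ via a generic stratum point close to $q$.
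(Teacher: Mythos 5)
Your proof is correct and shares the overall architecture of the paper's: use Lemma \ref{L:Masuda_Panov_orb} to get $H^{\odd}(\ol\Sigma_p;\Q)=0$ and a $G$-fixed point in $\ol\Sigma_p$, pass to the standard chart of Lemma \ref{L:Standard_rep} at that fixed point, identify the local lift of $\ol\Sigma_p$ with a coordinate subspace of $\CC^n$ to get the codimension, and then read off (b) from effectiveness of the normal isotropy representation. Where you genuinely diverge is in the key identification step: the paper runs a minimal geodesic $\sigma$ inside the totally geodesic $\ol\Sigma_p$ from $p$ to the fixed point and invokes Kleiner's Isotropy Lemma to keep $G^o_{\sigma(t)}=G_p^o$, then lifts $\sigma$ into the chart to see the coordinate structure; you avoid geodesics and the Isotropy Lemma altogether, picking an arbitrary stratum point $p'\in\Sigma_p\cap U_q$ (available since $\Sigma_p$ is dense in its closure) and pinning down the relevant subtorus by the purely group-theoretic observation that a connected $k$-subtorus of $\tilde G_q$ projecting onto $G_p^o$ must equal $\bigl(\rho_q^{-1}(G_p^o)\bigr)^o$, hence equals the coordinate torus $T_J^{\perp}$. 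Your route is more elementary (the only Riemannian input is the invariant metric needed for the charts and the equivariant exponential map), while the paper's geodesic argument has the cosmetic advantage of connecting $p$ itself to the chart. One presentational wrinkle: since $p$ need not lie in $U_q$, the phrase ``Lemma \ref{L:Isotropy dimension} realises $\tilde G_p^o$ as a subtorus of $T^n$'' is not literally meaningful; the object you actually use is $\tilde H:=\bigl(\rho_q^{-1}(G_p^o)\bigr)^o$, whose fixed set lifts $\orb^{G_p^o}\cap U_q$ by the argument of Corollary \ref{C:fixed-pt} (exactly as in the proof of Proposition \ref{P:Bullet_point_3}), and your own uniqueness observation shows that any reasonable interpretation of $\tilde H$ gives this same subtorus, so no harm is done.
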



\begin{proof}
Equip $\orb^{2n}$ with a $G$-invariant orbifold-Riemannian metric.  Recall from Lemma \ref{L:Masuda_Panov_orb} that $\overline \Sigma_p\In \orb^{G_p^o}$ contains some fixed point $p_0$ of the $G$ action.  Given that $\overline \Sigma_p$ is totally geodesic, being a component of the fixed-point set of an isometric group action, let $\sigma:[0,1]\to \ol\Sigma_p$ be a minimal geodesic in $\orb$ from $\sigma(0) = p$ to $\sigma(1) = p_0$. By definition, $G_p^o \In G_q^o$ for all $q \in \overline \Sigma_p$, hence $G_{\sigma(t)}^o \Ni G_p^o$ for all $t \in [0,1]$.  

On the other hand, since $p_0$ is a fixed point of the action of $G$, $d(p_0, p) = d(p_0, g \cdot p)$ for all $g \in G$, hence $\sigma$ is a minimal geodesic from $p_0$ to $G(p)$.   By Kleiner's Isotropy Lemma, $G_{\sigma(t)} \In G_p$ for all $t \in (0,1)$.  Therefore, $G_{\sigma(t)}^o = G_p^o$ for all $t \in [0,1)$.   

By Lemma~\ref{L:Standard_rep}, the $G$ action on $\orb^{2n}$ is ``standard'' around $p_0$, i.e.~the action lifts via differentials to the unique linear, effective $n$-torus action on $\tilde T_{p_0} U_{p_0}\cong\mathbb{C}^n$, where $U_{p_0}$ is a $G$
-invariant good local chart around $p_0$.

Let $\tilde \sigma$ be a geodesic in $\tilde T_{p_0} U_{p_0}$ such that $\pi_{p_0}(\exp_{p_0} \tilde{\sigma}(t)) = \sigma(t)$ for all $t$ where this makes sense. Up to a permutation of coordinates, $\tilde \sigma(t) = (0,\ldots,0, t\,  z_{l+1} , \ldots,t\, z_n)\in \tilde T_{p_0} U_{p_0}$ for some $l \leq n$. 
By Lemma \ref{L:Isotropy dimension}, $\tilde G_{\sigma(t)}^o \In \tilde G^o = \tilde G$ is an $l$-torus acting on the $l$ trivial coordinates, and the pre-image of $\ol \Sigma_p \cap U_{p_0}$ in $\tilde T_{p_0} U_{p_0}$ is the $2(n-l)$-dimensional fixed-point set of this action.

Notice that the action of $G$ on $\orb$ induces an effective $(n-l)$-torus action on $\overline \Sigma_p$ with fixed points.  Moreover, since $H^{\mathrm{odd}}(\orb;\Q)=0$, Lemma \ref{L:Masuda_Panov_orb} gives $H^{\odd}(\overline \Sigma_p; \Q) = 0$.

In order to verify that $\ol\Sigma_p$ is a torus orbifold, it remains to show that $\ol\Sigma_p$ is oriented.  Recall that $\ol\Sigma_p$ is a strong suborbifold of $\orb$.  Then, given any $q \in \ol\Sigma_p$ and a $G_q$-invariant good local chart $U_q \In \orb$, there is a splitting $\tilde T_q U_q = \tilde T_q \ol\Sigma_p \oplus \tilde \nu_q \ol\Sigma_p$.  As the action of the $l$-torus $\tilde G_q^o$ on $\ol\Sigma_p$ is trivial, it must act effectively on the $2l$-dimensional space $\tilde\nu_q \ol\Sigma_p$.  Hence, this action is equivalent to the standard $T^l$ action on $\CC^l$ and, as such, inherits a natural orientation.  An orientation on $\ol\Sigma_p$ can now be defined which is compatible with those on $\orb$ and $\tilde\nu_q \ol\Sigma_p$.

Finally, setting $q = p$ above yields part (b) of the statement.
\end{proof}


\begin{prop}
\label{P:Bullet_point_3}Let $\orb^{2n}$ be a torus orbifold with $H^{odd}(\orb;\Q)=0$. Then every point $p\in \orb^{2n}$ lies in the closures of exactly $\dim(G_p)$ strata of codimension $2$. Equivalently, a point $[p]\in \orb/G$ in the (relative) interior of a face of codimension $k$, lies in  exactly $k$ faces of codimension $1$ in $\orb/G$.
\end{prop}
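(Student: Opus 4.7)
\emph{Proof proposal.}  The plan is to reduce the count to a combinatorial fact about the coordinate stratification of $\CC^n$ by passing to the local normal form at a $G$-fixed point.  Since $H^{\mathrm{odd}}(\orb;\Q)=0$, Proposition~\ref{P:right-codimension} gives $H^{\mathrm{odd}}(\ol\Sigma_p;\Q)=0$, and Lemma~\ref{L:Masuda_Panov_orb} then yields a $G$-fixed point $p_0\in\ol\Sigma_p$.  Choose a $G$-invariant good local chart $U_{p_0}$: by Lemma~\ref{L:Standard_rep}, the lift $\tilde G_{p_0}$ is a torus $T^n$ acting on $\tilde U_{p_0}\cong\CC^n$ as the standard $T^n\subset\U(n)$, with $\Gamma_{p_0}\subseteq\tilde G_{p_0}$.

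In this local model, the orbit-type stratification of $\CC^n$ coincides with the coordinate stratification: for each $I\subseteq\{1,\ldots,n\}$, the stratum $\tilde\Sigma_I=\{z\in\CC^n : z_i = 0\iff i\in I\}$ is connected, has codimension $2|I|$, and its closure is the coordinate subspace $V_I=\{z_i=0\text{ for all }i\in I\}$.  Since $\Gamma_{p_0}\subseteq T^n$, each $V_I$ and each $\tilde\Sigma_I$ is $\Gamma_{p_0}$-invariant, so the strata of $\orb$ meeting $U_{p_0}$ are precisely the images $\pi_{p_0}(\tilde\Sigma_I)$, with the same codimensions.  By the Slice Theorem, this description propagates along $G(p_0)$ to cover a $G$-invariant neighbourhood of $p$.

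Under this identification, $p$ corresponds to some $z\in\CC^n$ with exactly $\dim G_p=:d$ vanishing coordinates, the count being forced by Proposition~\ref{P:right-codimension}(b).  A codimension-$2$ stratum whose closure contains $p$ then corresponds to a singleton $\{i\}\subseteq\{j : z_j = 0\}$, giving exactly $d$ such strata and proving the first assertion.  For the equivalent orbit-space formulation, Corollary~\ref{C:Corner} identifies $U_{p_0}/G$ with $\RR_+^n$, in which codimension-$1$ faces of $\orb/G$ become the coordinate hyperplanes $\{x_i=0\}$; the image $[p]$ has $d$ vanishing coordinates, so it lies in the relative interior of a face of codimension $k=d$ and is contained in exactly $k$ codimension-$1$ faces.

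The main obstacle is verifying that the coordinate stratification of $\tilde U_{p_0}$ descends bijectively to the orbit-type stratification of $\orb$ near $p_0$, and that this picture correctly describes the strata through the nearby point $p$.  Both rely on the inclusion $\Gamma_{p_0}\subseteq T^n$, which ensures that $\Gamma_{p_0}$ respects the coordinate decomposition, together with the Slice Theorem, which guarantees that the local model near $p$ is inherited from the one near $p_0$.
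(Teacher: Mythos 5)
There is a genuine gap at the step where you place $p$ inside the local model at the fixed point. The fixed point $p_0\in\ol\Sigma_p$ produced by Lemma~\ref{L:Masuda_Panov_orb} may lie far from $p$: the fixed-point set is a finite set of isolated points (Lemma~\ref{L:fixed points isolated}), so a generic $p$ lies in no chart $U_{p_0}$ around a fixed point. Your sentence ``by the Slice Theorem, this description propagates along $G(p_0)$ to cover a $G$-invariant neighbourhood of $p$'' does not do what you need: since $p_0$ is a $G$-fixed point, $G(p_0)=\{p_0\}$ and Theorem~\ref{T:slice-thm} only describes a $G$-invariant neighbourhood of $p_0$, not of $p$. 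Consequently the identification ``$p$ corresponds to some $z\in\CC^n$ with exactly $\dim G_p$ vanishing coordinates'' is unjustified, and with it the whole count; the same problem affects the orbit-space reformulation via Corollary~\ref{C:Corner}, which again only applies near $p_0$. The paper avoids this by working in a chart at $p$ itself: Lemma~\ref{L:Isotropy dimension} provides a $G_p$-compatible good local chart $U_p$ in which strata through $p$ lift to fixed-point sets of subgroups of $\tilde G_p^o$, and Proposition~\ref{P:right-codimension}(b) (whose proof is where the geodesic-to-a-fixed-point argument actually lives) says $\tilde G_p^o\cong T^{\dim G_p}$ acts trivially on $\tilde T_p\ol\Sigma_p$ and standardly on $\tilde\nu_p\ol\Sigma_p\cong\CC^{\dim G_p}$, so there are exactly $\dim G_p$ codimension-$2$ fixed-point sets upstairs, one for each circle factor.

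Your route could be repaired, but it needs an extra argument you have not supplied: one must show that the codimension-$2$ strata whose closures contain $p$ are exactly those whose closures contain $p_0$ \emph{and} contain the local picture of $\ol\Sigma_p$. One direction follows from connectedness: if $\Sigma'$ is a codimension-$2$ stratum with $p\in\ol{\Sigma'}$ and defining circle $S'\In G_p^o$, then $\ol\Sigma_p\In\orb^{S'}$ is connected and meets the component $\ol{\Sigma'}$, hence $\ol\Sigma_p\In\ol{\Sigma'}$, so in particular $p_0\in\ol{\Sigma'}$; one then counts, in the standard chart at $p_0$ (Lemma~\ref{L:Standard_rep}), the coordinate hyperplanes containing the local lift of $\ol\Sigma_p$, which is a coordinate subspace of codimension $2\dim G_p$. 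Without some such bridge between $p$ and $p_0$, the proposal does not prove the statement.
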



\begin{proof}

Fix $p$ in $\orb$ and let $U_p$ denote some small $G_p$-invariant good local chart around $p$, such that at every $q\in U_p$, $G_q=(G_p)_q$ (cf. Lemma \ref{L:Isotropy dimension}). Then a fixed-point set $\orb^H$, for some $H\In G$, intersects $U_p$ if and only if $H\In G_p$. In particular, $p$ belongs to the closure of each stratum intersecting $U_p$. 

By Lemma \ref{L:Isotropy dimension}, the closure $\ol{\Sigma}=\orb^H$ of a stratum intersecting $U_p$ lifts, via $\pi_p:\tilde{U}_p\to U_p$, to the fixed-point set $\tilde{U}_p^{\tilde{H}}$ where $\tilde{H}$ is the identity component of $\rho^{-1}(H)$  and $\rho:\tilde{G}_p\to G_p$ is the map constructed in Proposition \ref{P:LES}. In particular, there is a one-to-one correspondence between codimension-2 fixed-point sets in $\tilde{U}_p$ of subgroups of $\tilde{G}_p^o$ and codimension-2 strata in $\orb$ intersecting $U_p$  whose closure contains $p$. By Proposition \ref{P:right-codimension}(b), there are precisely $l=\dim G_p$ different codimension-2 fixed-point sets in $\tilde{U}_p$, each fixed by a different $S^1$-factor of $\tilde{G}_p^o=T^l$. This then finishes the proof. \end{proof}


\begin{lem}
\label{L:No_fake} Let $\orb$ be a torus orbifold with $H^\odd (\orb; \Q) = 0$. Then the closure of each two-dimensional stratum of $\orb$ is homeomorphic to a two-sphere and each one-dimensional face (edge) in the quotient $\orb/G$ contains exactly two fixed points.
\end{lem}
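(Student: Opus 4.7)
The plan is to apply Proposition \ref{P:right-codimension} to reduce to a two-dimensional, closed, oriented torus orbifold, and then to classify that orbifold using the Euler characteristic. Concretely, for a two-dimensional stratum $\Sigma_p$ one has $\dim G_p = n-1$, and Proposition \ref{P:right-codimension} realises $\overline{\Sigma}_p$ as a closed, oriented, two-dimensional torus orbifold on which the circle $G/G_p^o$ acts effectively with non-empty fixed-point set and for which $H^{\mathrm{odd}}(\overline{\Sigma}_p;\QQ)=0$.

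The first step is to verify that the underlying topological space $|\overline{\Sigma}_p|$ is a closed, orientable topological $2$-manifold: every local model has the form $\RR^2/\Gamma$ with $\Gamma$ a finite cyclic subgroup of $SO(2)$ (using orientability of the orbifold in dimension two), and such a quotient is homeomorphic to $\RR^2$ via the angle-unwrapping map. Since finite local groups are invisible to rational cohomology, one has
\[
\chi(|\overline{\Sigma}_p|) \;=\; \chi(\overline{\Sigma}_p) \;=\; \sum_i \dim H^{2i}(\overline{\Sigma}_p;\QQ) \;\geq\; 1,
\]
so the classification of closed oriented surfaces (for which $\chi = 2-2g$) forces $|\overline{\Sigma}_p| \cong S^2$, giving the first claim.

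To count the fixed points on $\overline{\Sigma}_p$ and to identify the corresponding edge in $\orb/G$, I would apply Lemma \ref{L:fixed points isolated} to $\overline{\Sigma}_p$ to conclude that $\overline{\Sigma}_p^G$ is finite, and then invoke the Euler characteristic identity $\chi(\overline{\Sigma}_p^G) = \chi(\overline{\Sigma}_p) = 2$ to conclude that there are exactly two such fixed points. For the edge itself, the Slice Theorem at each fixed point yields a standard linear $S^1$-action on $\RR^2$ with quotient $[0,\infty)$, whereas away from the fixed points the action is locally free with one-dimensional orbits, giving locally an open interval in the quotient. Hence $\overline{\Sigma}_p/G$ is a compact, connected $1$-manifold with exactly two boundary points, i.e.\ a closed interval whose endpoints are the two fixed points.

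The main subtlety will be the passage between the rational cohomology of the orbifold and that of its underlying topological space, which is what makes the classification-of-surfaces step valid; once this identification is in hand the rest is a short Euler-characteristic and Slice-Theorem argument.
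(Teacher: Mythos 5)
Your proof is correct and follows essentially the same route as the paper: Proposition \ref{P:right-codimension} exhibits $\overline{\Sigma}_p$ as a two-dimensional torus orbifold with $H^{\odd}(\overline{\Sigma}_p;\QQ)=0$, its underlying space is a closed, orientable surface of positive Euler characteristic and hence a two-sphere, and the identity $\chi(\overline{\Sigma}_p^G)=\chi(\overline{\Sigma}_p)=2$ gives exactly two fixed points on each edge. The only differences are cosmetic: where the paper cites Thurston for the fact that a two-dimensional orbifold has a topological surface as underlying space, you re-derive it directly from the local models $\RR^2/\ZZ_k\cong\RR^2$, and your slice-theorem description of the edge as an interval is harmless but not needed, since an edge is by definition the image of $\overline{\Sigma}_p$ in $\orb/G$.
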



\begin{proof}
Recall that the closure $\ol \Sigma^2_i$ of each  two-dimensional stratum $\Sigma^2_i$ in a torus orbifold $\orb$ projects down to a one-dimensional face of $\orb/G$. By Proposition~\ref{P:right-codimension}, each $\ol \Sigma^2_i$ contains a fixed point of the $G$ action and is a two-dimensional torus orbifold with $H^\odd(\ol \Sigma^2_i;\Q) = 0$. By \cite[Chap. 13]{Thu} the $\overline{\Sigma}^2_i$ are closed, orientable, topological $2$-manifolds with positive Euler characteristic, hence each must be homeomorphic to a two-dimensional sphere. Therefore each $\overline{\Sigma}_i^2$ has Euler characteristic $2$ and contains exactly two fixed points of the $G$ action. 
\end{proof}


\section{Weights, GKM-graphs and the moment-angle complex} 
\label{SS:MANGLECPX}

Let $\orb$ be a $2n$-dimensional torus orbifold with
\(H^\text{odd}(\orb;\Q)=0\). A \emph{facet} of the orbit space
$Q=\orb/G$ is a face of codimension one. Recall that, by Proposition
\ref{P:Bullet_point_3}, this corresponds, in \(\orb\), to the closure $\ol\Sigma_p$ of a stratum $\Sigma_p$ defined by a one-dimensional isotropy group $G_p$.

Given a facet $F$, let $p \in \orb$ be a point with $\dim(G_p) = 1$ such that $\ol\Sigma_p$ is the pre-image of $F$, and let $U_p\subset \orb$ be a $G_p$-invariant good local chart around $p$.

Formally assign a circle $S^1_F$ to $F$ and let the \emph{label}
\[
\lambda_F:S^1_F \to G
\]
denote the composition (covering)
\[
S^1_F \stackrel{\cong}{\longrightarrow} \tilde G_p^o \stackrel{\rho_{p}}{\longrightarrow} G_p^o \In G,
\]
where the map $\rho_p : \tilde G_p \to G_p$ is that arising in Proposition \ref{P:LES}. Set now $T_Q=\prod_FS^1_F$ and define the \emph{label map}
\[
\lambda = \prod_F \lambda_F:T_{Q}\longrightarrow G.
\] 


\begin{lem}
The label map $\lambda: T_Q \to G$ is well defined.
\end{lem}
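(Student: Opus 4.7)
The plan is to verify that the composition $\lambda_F:S^1_F \to G$ does not depend, up to a compatible choice of the isomorphism $S^1_F \xrightarrow{\cong} \tilde G_p^o$, on the point $p$ used in its definition. I would split this into two parts: (i) the image of $\lambda_F$ in $G$ depends only on the stratum $\Sigma$ whose closure projects to $F$, and (ii) for any two points $p,q \in \Sigma$ there is a canonical identification $\tilde G_p^o \cong \tilde G_q^o$ intertwining $\rho_p$ and $\rho_q$.

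Part (i) is immediate: any two points of the stratum have, by Definition \ref{D:Strata}, the same identity-component isotropy, so $G_p^o = G_q^o =: K \In G$ for all $p,q \in \Sigma$. For part (ii), I would first treat the local case, where $q \in U_p$ for some $G_p$-invariant good local chart as in Lemma \ref{L:Isotropy dimension}. That lemma supplies a canonical identification $\tilde G_q \cong (\tilde G_p)_{\tilde q}$ together with commutative diagrams involving $\rho_p$ and $\rho_q$. The key point is that, since $q \in \Sigma$, the lift $\tilde q$ may be chosen in $\pi_p^{-1}(\overline{\Sigma}\cap U_p)$, which by Proposition \ref{P:right-codimension}(b) is pointwise fixed by $\tilde G_p^o$; thus $\tilde G_p^o \In (\tilde G_p)_{\tilde q}^o$, and since both are $1$-dimensional tori this inclusion is an equality. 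Consequently, $\tilde G_q^o$ gets canonically identified with $\tilde G_p^o$ in such a way that $\rho_p|_{\tilde G_p^o} = \rho_q|_{\tilde G_q^o}$ as covering maps onto $K$.

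To handle arbitrary $p, q \in \Sigma$, I would use that the stratum $\Sigma$ is connected by definition, join $p$ and $q$ by a path in $\Sigma$, cover this path by finitely many good local charts, and iterate the local identification above to obtain an isomorphism $\tilde G_p^o \cong \tilde G_q^o$ compatible with the projections to $K$. Fixing any isomorphism $S^1_F \xrightarrow{\cong} \tilde G_{p_0}^o$ at a single reference point $p_0 \in \Sigma$ and transporting it along paths therefore yields the same homomorphism $\lambda_F:S^1_F \to G$ for every possible choice of $p$. I do not expect a genuinely hard step here; the main subtlety I would watch is keeping the projections to $K$ compatible under the chain of identifications, but this is encoded directly in the commutative diagrams of Lemma \ref{L:Isotropy dimension}, so it reduces essentially to bookkeeping.
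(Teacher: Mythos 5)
Your proposal is correct and follows essentially the same route as the paper: both reduce to the local statement that, for $q$ in the stratum inside a chart as in Lemma \ref{L:Isotropy dimension}, the lift $\tilde q$ is fixed by $\tilde G_p^o$ (via Proposition \ref{P:right-codimension}(b)), so that $\tilde G_q^o = \tilde G_p^o$ with $\rho_q = \rho_p$ there, and then invoke connectedness of the stratum. The paper packages the globalization as local constancy of the order of $\Gamma_p \cap \tilde G_p^o$ rather than transporting identifications along paths, but this is only a cosmetic difference.
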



\begin{proof}
In order to verify that the map $\lambda$ is well defined, it need
only be demonstrated that the labels $\lambda_F$ do not depend on the
choice of the point in the pre-image $\ol\Sigma_p$ of a facet $F$.  By
Definition \ref{D:Strata}, if $q \in \ol\Sigma_p$ is another point with $\dim(G_q) = 1$, then $G_p^o = G_q^o$ as subgroups of $G$.  It suffices to show that there is an isomorphism $\alpha : \tilde G_q^o \to \tilde G_p^o$ such that the following diagram commutes:
$$
\xymatrix{
S^1_F \ar[r]^{\cong} \ar[dr]_{\cong} & \tilde G_p^o \ar[r]^{\rho_p} & G_p^o  \\
& \tilde G_q^o \ar[r]^{\rho_q} \ar@{.>}[u]^{\alpha} & G_q^o \ar[u]^{=} 
}
$$
As each of $G_p^o$, $G_q^o$, $\tilde G_p^o$ and $\tilde G_q^o$ is a circle, if the kernels of $\rho_p$ and $\rho_q$ have the same order, then it is possible to lift the identity $G_q^o \stackrel{=}{\lra} G_p^o$ to such an isomorphism $\alpha$.

The kernels of $\rho_p$ and $\rho_q$ are given by $\Gamma_p \cap \tilde G_p^o$ and $\Gamma_q \cap \tilde G_q^o$ respectively.  Since the stratum $\Sigma_p$ is connected, it is enough to show that the order of \(\Gamma_p\cap \tilde{G}_p^o\) is locally constant.

Let \(U_p\) be a sufficiently-small $G_p$-invariant good local chart around \(p\) such that \(\tilde{U}_p\) is a linear \(\tilde{G}_p\)-representation.  Then \(\tilde{U}_p\) is of the form \(V\oplus W\) such that \(\tilde{G}_p^o\) acts non-trivially on \(V\) and trivially on \(W\) (see Proposition~\ref{P:right-codimension}).

Moreover, since \(\Sigma_p\) has codimension two, it follows that \(V\) is two dimensional and \(\pi_p(W)=\Sigma_p\cap U_p\).  Therefore, the subgroup \(\Gamma_p\cap \tilde{G}_p^o\) of \(\Gamma_p\) acts trivially on \(W\).  For any $q \in \Sigma_p \cap U_p$ and $\tilde{q} \in \pi_p^{-1}(q) \cap W$ one has $\Gamma_p \cap \tilde G_p^o \In (\Gamma_p)_{\tilde q}$. By Lemmas \ref{L:localgp} and \ref{L:Isotropy dimension}, $\Gamma_q=(\Gamma_p)_{\tilde{q}}$ and $\tilde G_q^o=\tilde G_p^o$, hence $\Gamma_p \cap \tilde G_p^o \In \Gamma_q\cap \tilde G_q^o$. On the other hand, the same lemmas yield $\Gamma_q\In \Gamma_p$ and $\tilde G_q^o\In \tilde G_p^o$, hence $\Gamma_q \cap \tilde G_q^o \In \Gamma_p\cap \tilde G_p^o$. Therefore, $\Gamma_p \cap \tilde G_p^o$ is locally constant. 
\end{proof}

The labels of the facets can be used to define weights on the edges of
the orbit space. By Proposition \ref{P:Bullet_point_3}, any edge $E$
is the intersection of $n-1$ facets $\{F_1,\ldots F_{n-1}\}$ and, by
restricting the label map to $T_E=\prod_{i=1}^{n-1}S^1_{F_i}$, one
obtains a homomorphism $\lambda_E:T_E\to G$. 
Let $p_i$ be a generic point in the stratum corresponding to \(F_i\).
 As $S^1_{F_i} \to
G_{p_i}^o \In G$ is a covering, for all facets $F_i$, the map $\lambda_E$ induces an injective map $\mf{t}_E \to \g$ on Lie algebras, hence a surjective map $\g^* \to \mf t_E$ on the corresponding dual spaces.  Since the dual $\mf l^*$ of the Lie algebra of a Lie group $L$ is canonically isomorphic to $H^2(BL;\RR)$, one concludes that the induced map $\lambda_E^* : H^2(BG;\ZZ)=\ZZ^n \to H^2(BT_E; \ZZ)=\ZZ^{n-1}$ has full rank.  Define the \emph{weight} $\mu(E)\in H^2(BG;\ZZ)$ of $E$ to be a generator of the kernel of $\lambda_E^*$.

 In this way, one obtains a system of weights on the vertex-edge graph of the orbit space $Q$, i.e.~on the union of edges and vertices. This is the well-known GKM-graph associated to the torus orbifold $\orb$. In an analogous manner to the manifold case (cf.~\cite{MMP}), this graph determines the rational equivariant cohomology ring \(H_G^*(\orb;\Q) = H^*(\orb_G;\Q)\) of $\orb$, where $\orb_G = \orb \times_G EG$ is the Borel construction, in the following way: Since \(H^\text{odd}(\orb;\Q)=0\) by assumption, it follows that \(H_G^*(\orb;\Q)\) is a free \(H^*(BG;\Q)\)-module, which is easily seen from the spectral sequence of the homotopy fibration $\orb \to \orb_G \to BG$.  In particular, the induced homomorphism $H_G^*(\orb;\mathbb{Q}) \to H^*(\orb;\Q)$ is surjective.  

If $\orb^{(1)} \In \orb$ is the union of all $G$-orbits of dimension at most one, i.e. the pre-image of the vertex-edge graph of $Q$, the respective inclusion maps induce a commutative diagram
$$
\xymatrix{
H_G^*(\orb^{(1)}; \Q)  \ar[r] & H_G^*(\orb^G; \Q) \\
H_G^*(\orb; \Q) \ar[u] \ar[ur] &
}
$$
It follows from Lemma 2.3 and Proposition 2.4 of \cite{ChSk} that the homomorphisms $H_G^*(\orb;\Q)\rightarrow H_G^*(\orb^G;\Q)$ and $H_G^*(\orb^{(1)};\Q)\to H_G^*(\orb^G;\Q)$ have the same image, and the former homomorphism is injective.  Furthermore, the homomorphism $H_G^*(\orb; \Q) \to H_G^*(\orb^{(1)}; \Q)$ must, therefore, also be injective.

By Lemma \ref{L:No_fake}, \(\orb^{(1)}\) is a union of two-dimensional spheres (intersecting only in the fixed points of the $G$ action).  Therefore, the following theorem follows as in the manifold case \cite[Theorem 7.2]{GKM}:


\begin{thm}
\label{T:EqCoh}
Let \(\orb\) be a torus orbifold with fixed points $\{p_1, \dots, p_N\}$ and $H^\odd(\orb;\Q) = 0$. Then, via the natural restriction map
\[
H^*_G(\orb;\Q)\to H^*_G(\orb^G;\Q) = \bigoplus_{i=1}^N H^*(BG;\Q),
\] 
the equivariant cohomology algebra $H^*_G(\orb;\Q)$ is isomorphic to the set of $N$-tuples $(f_1,\dots, f_N) \in H^*_G(\orb^G;\Q)$, with the property that if the vertices $p_i$ and $p_j$ in the associated GKM-graph are joined by an edge with weight $\mu_{ij} \in H^2(BG;\Q)$, then $f_i - f_j$ lies in the ideal of $H^*(BG;\Q)$ generated by $\mu_{ij}$.
\end{thm}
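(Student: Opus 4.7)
The plan is to follow the standard manifold GKM argument (cf.\ \cite{GKM, MMP}), exploiting the structural results already assembled in the paragraphs preceding the statement. Those paragraphs establish that $H^{\odd}(\orb;\Q)=0$ makes $H^*_G(\orb;\Q)$ a free $H^*(BG;\Q)$-module via the Serre spectral sequence of $\orb\to\orb_G\to BG$, and that the Chang--Skjelbred results of \cite{ChSk} yield injections
\[
H^*_G(\orb;\Q)\hookrightarrow H^*_G(\orb^{(1)};\Q)\hookrightarrow H^*_G(\orb^G;\Q)
\]
whose images in $H^*_G(\orb^G;\Q)$ coincide. Hence it suffices to identify the image of $H^*_G(\orb^{(1)};\Q)$ in $H^*_G(\orb^G;\Q)$ explicitly.

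Next, I would decompose $\orb^{(1)}$ into its building blocks. By Lemma~\ref{L:No_fake}, each closure $\ol\Sigma_E$ of a two-dimensional stratum is homeomorphic to $S^2$ and contains exactly two $G$-fixed points, which are the endpoints of the corresponding edge $E$ of the GKM-graph; these $2$-spheres meet only in the fixed points $\{p_1,\dots,p_N\}$. A Mayer--Vietoris argument then provides an exact sequence
\[
0\;\lra\; H^*_G(\orb^{(1)};\Q)\;\lra\; \bigoplus_E H^*_G(\ol\Sigma_E;\Q)\;\lra\; \bigoplus_{i=1}^N H^*_G(\{p_i\};\Q),
\]
so the image of $H^*_G(\orb^{(1)};\Q)$ in $H^*_G(\orb^G;\Q)$ consists precisely of those tuples $(f_1,\dots,f_N)$ whose restriction to the two endpoints of each edge lies in the image of $H^*_G(\ol\Sigma_E;\Q)$.

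The main local computation is the description of $H^*_G(\ol\Sigma_E;\Q)$ for a single edge $E$ with endpoints $p_i$ and $p_j$. By the definition of stratum, the $G$-action on $\ol\Sigma_E$ has generic isotropy a codimension-one subtorus $K_E\In G$, so the action factors through the circle $G/K_E$ and has exactly two fixed points. A direct computation with the Borel fibration $\ol\Sigma_E\to (\ol\Sigma_E)_G\to BG$ — using that $H^*_{G/K_E}(S^2;\Q)$ is the classical equivariant cohomology of a circle action on $S^2$ — identifies the image of $H^*_G(\ol\Sigma_E;\Q)$ in $H^*(BG;\Q)\oplus H^*(BG;\Q)$ as the pairs $(f_i,f_j)$ with $f_i-f_j$ lying in the principal ideal generated by the weight at either fixed point. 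Assembling this condition over all edges yields exactly the GKM-description of the theorem.

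The step I expect to be the delicate one is matching the weight appearing in the local computation at $\ol\Sigma_E$ with the weight $\mu(E)$ defined in the preceding discussion via the cover $\lambda_E: S^1_E\to G_{p_i}^o$. In the manifold case this is immediate from the slice representation at a fixed point, but here $\lambda_E$ is generally a nontrivial cover and $\ol\Sigma_E$ may carry orbifold structure at its poles. To handle this I would appeal to Lemma~\ref{L:Standard_rep}, which guarantees that the $\tilde G^o_{p_i}$-action on $\tilde T_{p_i}\ol\Sigma_E$ is standard, and to the local constancy of $\Gamma_p\cap\tilde G_p^o$ established while proving well-definedness of $\lambda$; together these ensure that the weight of the normal representation at $p_i$ generates (up to sign) the same ideal in $H^2(BG;\Q)$ as $\mu(E)$, and similarly at $p_j$. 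Since the local group contributions to $H^*_G(\ol\Sigma_E;\Q)$ are torsion and thus invisible over $\Q$, this identification completes the reduction to the classical $S^1$-equivariant cohomology of $S^2$ and concludes the proof.
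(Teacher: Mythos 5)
Your proposal is correct and follows essentially the same route as the paper: equivariant formality from $H^{\odd}(\orb;\Q)=0$, the Chang--Skjelbred reduction identifying the image of $H^*_G(\orb;\Q)$ in $H^*_G(\orb^G;\Q)$ with that of $H^*_G(\orb^{(1)};\Q)$, and the decomposition of $\orb^{(1)}$ into the $2$-spheres of Lemma~\ref{L:No_fake}. The only difference is one of exposition: the paper delegates the final assembly to the manifold-case argument of \cite[Theorem 7.2]{GKM}, whereas you spell out the Mayer--Vietoris gluing and the local $S^2$-computation, including the (rationally harmless) matching of the local isotropy weight with $\mu(E)$.
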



\begin{rem}\label{R:functorial}
The process by which one obtains $H_G^*(\orb; \Q)$ from the GKM-graph is functorial in the following sense:  Suppose that $\orb$ (resp. $\hat \orb$) is a $2n$-dimensional torus orbifold with fixed points $p_1, \dots, p_N$ (resp. $\hat p_1, \dots, \hat p_{\hat N}$) and weights $\mu$ (resp. $\hat \mu$).  Suppose, further, that there is a weight-preserving, injective map $\vphi$ between the GKM-graphs of $\orb$ and $\hat\orb$, i.e. $\hat\mu(\vphi(E)) = \mu(E)$, for each edge $E$ of $\orb/G$.

The map $\vphi$ induces an injective homomorphism
\[
\vphi_\# : \bigoplus_{i=1}^N H^*(BG;\Q) \to \bigoplus_{i=1}^{\hat N} H^*(BG;\Q),
\]
where, for each $i_0 \in \{1, \dots, N\}$ and given $\vphi(p_{i_0}) = \hat p_{j_0}$, the restriction of $\vphi_\#$ to the $i_0$-th summand of $\bigoplus_{i=1}^N H^*(BG;\Q)$ is given by the identity map onto the $j_0$-th summand of the target space $\bigoplus_{i=1}^{\hat N} H^*(BG;\Q)$.

By Theorem \ref{T:EqCoh}, $H_G^*(\orb; \Q)$ and $H_G^*(\hat \orb; \Q)$ embed into $\bigoplus_{i=1}^N H^*(BG;\Q)$ and $\bigoplus_{i=1}^{\hat N} H^*(BG;\Q)$, respectively.  Since $\vphi$ is weight-preserving, $\vphi_\#$ maps $H_G^*(\orb; \Q)$ into $H_G^*(\hat \orb; \Q)$.  It then follows that there is an induced $H^*(BG; \Q)$-module homomorphism 
\[
H_G^*(\orb; \Q) \to H_G^*(\hat \orb; \Q),
\]
which, by abuse of language, will be denoted also by \(\varphi_\#\).
Moreover, if $H^{\odd}(\orb; \Q) = H^{\odd}(\hat \orb; \Q) = 0$, then 
\[
H^*(\orb; \Q) = H_G^*(\orb;\Q) / H^{> 0}(BG; \Q) \cdot H_G^*(\orb; \Q)
\] 
and similarly for $H^*(\hat \orb; \Q)$.  Hence, there is an induced homomorphism
\[
\ol\vphi_\# : H^*(\orb; \Q) \to H^*(\hat \orb; \Q)
\]
such that the diagram
\begin{equation}
\label{E:EqRHE}
\xymatrix{
H_G^*(\orb; \Q) \ar[d] \ar^{\vphi_\#}[r] & H_G^*(\hat \orb; \Q) \ar[d] \\
H^*(\orb; \Q) \ar_{\ol\vphi_\#}[r] & H^*(\hat \orb; \Q)
}
\end{equation}
commutes.
\end{rem}
\bigskip


Recall now that an $n$-dimensional manifold with corners $Q$, i.e. a
manifold locally modelled on $\RR^n_+$, is called \emph{nice} if each
one of its codimension-$k$ faces is contained in  exactly $k$ facets,
i.e. codimension-$1$ faces, of $Q$.

Formally assign a copy $S^1_F$ of the circle to each facet $F$ of $Q$ and let $T_Q= \prod_F S_F^1$ be the torus given by their product.

For any $q\in Q$, let $T(q) = \prod_{F \ni q} S^1_F \In T_Q$ denote the subtorus generated by the circles corresponding to the facet of $Q$ which contain $q$. The \emph{moment-angle complex} is defined by $Z_{Q}= (Q\times T_Q)/\!\!\sim$, where $(q_1,t_1)\sim (q_2,t_2)$ if $q_1=q_2$ and $t_1t_2^{-1}\in T(q_1)$.

As $Q$ is a nice manifold with corners, it follows that $Z_Q$ is a topological manifold with a continuous $T_Q$ action, such that $Z_Q/T_Q$ is homeomorphic to $Q$. 

Suppose that, in addition, $Q$ has $0$-dimensional faces.  Consider a torus $G=T^n$ and a homomorphism
\[
\hat\lambda: T_Q \to G
\]
such that, for every $q \in Q$, the restriction $\hat\lambda|_{T(q)} : T(q) \to G$ has finite kernel.  This condition ensures that the kernel $K$ of $\hat\lambda$ acts almost freely on $Z_Q$.  The group $G$ then acts on the quotient $\orb_Q = Z_Q/K$ such that $(\orb_Q, G)$ is a $2n$-dimensional torus orbifold whose orbit space $\orb_Q/G$ has labels induced by the assignment $\hat\lambda$, and there is a face-preserving homeomorphism $\orb_Q/G\to Q$.

The following three standard examples will be needed in the proof of Theorem \ref{T:main_thm}.


\begin{example}
\label{eg:simplex}
 \cite[Ex.\ 6.7]{BuPa}
If $Q = \Delta^n$ is an $n$-dimensional simplex, then $T_Q$ is an $(n + 1)$- dimensional torus. Moreover, $Z_Q$ is equivariantly homeomorphic to $\sph^{2n+1} \In \CC^{n+1}$ with the standard linear torus action.
\end{example}


\begin{example}
\label{eg:susp} \cite[Ex.\ 4.3]{MaPa}
If $Q = \Sigma^n$ is the suspension of the simplex $\Delta^{n-1}$, then $T_Q$ is $n$-dimensional. Moreover, $Z_Q$ is equivariantly homeomorphic to $\sph^{2n} \In \CC^n\times\RR$ with the standard linear torus action.
\end{example}


\begin{example}
\label{eg:product}
\cite[Prop.\ 6.4]{BuPa}
Let $Q_1$ and $Q_2$ be two nice manifolds with corners. If $Q = Q_1 \times Q_2$, then $T_Q = T_{Q_1} \times T_{Q_2}$ and $Z_Q$ is equivariantly homeomorphic to $Z_{Q_1} \times Z_{Q_2}$ .
\end{example}


\section{Equivariant classification of torus orbifolds}
\label{S:PROOF}

In order to prove Theorem  \ref{T:main_thm}, it is necessary to first understand the combinatorial properties of the  face poset of the orbit space $\orb/G$.


\begin{prop}\label{P:comb-properties}
Let $\orb$ be a simply-connected, rationally-elliptic torus orbifold. Then the face poset of $\orb/G$ satisfies:
\begin{enumerate}
	\item The vertex-edge graph of each face is connected. 
	\item Each face of $\orb/G$ contains at least one vertex.
	\item Each face of $\orb/G$ of codimension $k$ is contained in exactly $k$ faces of codimension $1$. 
	\item Each one-dimensional face of $\orb/G$ contains exactly two fixed points of the $G$ action. 
	\item Every two-dimensional face of $\orb/G$ contains at most four vertices. 
	\item For $d\geq 3$, no $d$-dimensional face is combinatorially equivalent to the face poset of $[-1,1]^d/\{\pm \id\}$. 
\end{enumerate}
\end{prop}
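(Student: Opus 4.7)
The plan is to treat each of the six items separately. Items (c) and (d) are restatements of Proposition~\ref{P:Bullet_point_3} and Lemma~\ref{L:No_fake}, so I would simply cite them. For (b), given a face $F\subseteq\orb/G$, the closed stratum $\ol\Sigma$ mapping onto $F$ is, by Proposition~\ref{P:right-codimension}, a torus orbifold with $H^{\odd}(\ol\Sigma;\Q)=0$, and Lemma~\ref{L:Masuda_Panov_orb} applied to $\ol\Sigma$ yields $\ol\Sigma^G\neq\emptyset$, so $F$ contains a vertex. For (a), I would apply Theorem~\ref{T:EqCoh} to $\ol\Sigma$: the restriction embeds $H_G^*(\ol\Sigma;\Q)$ into $\bigoplus_{p\in\ol\Sigma^G}H^*(BG;\Q)$ as tuples satisfying the GKM edge conditions; in degree zero, such tuples are constant on each component of the vertex-edge graph of $F$. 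Connectedness of $\ol\Sigma$ forces $\dim_\Q H_G^0(\ol\Sigma;\Q)=1$, hence the vertex-edge graph of $F$ is connected.

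For (e), a two-dimensional face $F$ corresponds to a four-dimensional closed stratum $\ol\Sigma$ with effective $T^2$-action and $H^{\odd}(\ol\Sigma;\Q)=0$. The number of vertices of $F$ equals $|\ol\Sigma^G|=\chi(\ol\Sigma)=2+b_2(\ol\Sigma)$, by Lemma~\ref{L:fixed points isolated} and Poincar\'e duality for the oriented 4-orbifold $\ol\Sigma$, so the claim reduces to $b_2(\ol\Sigma)\leq 2$. Since $\ol\Sigma$ is a component of an iterated torus fixed-point set in the rationally-elliptic $\orb$, Halperin's theorem on fixed-point components of torus actions on rationally elliptic spaces forces $\ol\Sigma$ itself to be rationally elliptic. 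A Sullivan-model analysis of 4-dimensional rationally-elliptic orbifolds with vanishing odd rational cohomology (mirroring the manifold classification whose representatives are $\sph^4$, $\CP^2$, $\sph^2\times\sph^2$ and $\CP^2\#\pm\CP^2$) then yields $b_2\leq 2$.

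For (f), I would argue by contradiction: suppose some $d$-face $F$ with $d\geq 3$ is combinatorially equivalent to $[-1,1]^d/\{\pm\id\}$, so that $F$ has $d$ facets, $2^{d-1}$ vertices, and each vertex lies in all $d$ facets. The combinatorial data of $F$, together with the moment-angle construction of Section~\ref{SS:MANGLECPX}, realize the $2d$-dimensional closed stratum $\ol\Sigma$ as the quotient, by a free $\ZZ_2$-action arising from the antipodal identification, of a simpler torus orbifold $\tilde{\ol\Sigma}$ whose orbit space is the cube $[-1,1]^d$ (for instance, $\tilde{\ol\Sigma}=(\sph^2)^d$ in the model case). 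Thus $\ol\Sigma$ admits a non-trivial orbifold $\ZZ_2$-cover. The plan is then to use the Slice Theorem~\ref{T:slice-thm} to extend this cover equivariantly to a tubular $G$-neighbourhood of $\ol\Sigma$ in $\orb$, and argue via an orbifold Van-Kampen-type construction that the extended cover assembles to a non-trivial orbifold cover of all of $\orb$, contradicting the (orbifold) simple-connectedness of $\orb$.

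The hard part will be (f): the combinatorial type $[-1,1]^d/\{\pm\id\}$ is already compatible with (a)--(e) and produces a rationally-elliptic cohomology on each individual stratum, so the obstruction must come from a global topological argument, not a local combinatorial one. The delicate step is verifying that the $\ZZ_2$-cover of $\ol\Sigma$ genuinely extends across the complement of the stratum to assemble into a cover of the whole orbifold; this is a local-to-global issue that does not follow from the face-poset combinatorics alone and will likely require careful use of the orbifold structure of $\orb$ together with the equivariant Slice Theorem.
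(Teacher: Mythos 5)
Your handling of (a)--(e) is essentially sound and close to the paper: (b), (c), (d) are, as you say, Lemma~\ref{L:Masuda_Panov_orb}, Proposition~\ref{P:Bullet_point_3} and Lemma~\ref{L:No_fake}; for (a) the paper simply cites Chang--Skjelbred (their Proposition~2.5, using equivariant formality), and your degree-zero argument via Theorem~\ref{T:EqCoh} applied to the closed stratum is a legitimate variant of the same circle of ideas; for (e) the paper defers to Wiemeler's Lemma~4.2, which is exactly your Euler-characteristic-plus-ellipticity argument (the only point you gloss over, which the paper flags explicitly, is that \cite[Cor.~3.3.11]{AP} must be justified for compact spaces with finitely many orbit types rather than $G$-CW complexes).

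Part (f), however, contains a genuine gap, and your diagnosis of where the difficulty lies is mistaken. The paper's proof of (f) is not a global covering-space argument at all: it observes that the pre-image $\ol\Sigma$ of such a face has GKM data isomorphic to that of $N=(\sph^2)^d/\ZZ_2$, hence the same rational cohomology ring, with $b_1=b_2=b_3=0$ and $b_4=d(d-1)/2$; since $\ol\Sigma$ is rationally elliptic by \cite[Cor.~3.3.11]{AP}, the elliptic inequality of \cite[Thm.~32.6]{FHT} gives $2d(d-1)\leq 2d$, impossible for $d\geq 3$. So, contrary to your claim that the cube-quotient combinatorics ``produces a rationally-elliptic cohomology on each individual stratum,'' the whole point is that it does not: rational ellipticity of the face pre-image is precisely the ingredient that kills this poset, and your proposed argument never uses it. Your route has two unjustified steps. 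First, combinatorial equivalence of the face poset of $F$ with that of $[-1,1]^d/\{\pm\id\}$ does not allow you to identify $\ol\Sigma$ with the moment-angle model $(\sph^2)^d/\ZZ_2$, nor to conclude that $\ol\Sigma$ admits a non-trivial $\ZZ_2$-cover; the face poset (even together with the labels) only yields an identification of GKM data and hence of rational cohomology, and upgrading this to an equivariant homeomorphism is exactly the kind of rigidity the paper does not have (and only obtains rationally, using ellipticity), so this step is essentially circular. Second, even granting a connected double cover of $\ol\Sigma$, there is no reason it should extend over $\orb$: the fundamental group of a fixed-point component need not inject into that of the ambient space (a circle occurs as the fixed-point set of a linear torus action on a simply-connected sphere), so simple-connectedness of $\orb$ places no direct constraint on $\pi_1$ of $\ol\Sigma$, and the proposed Slice-Theorem/Van-Kampen assembly has no mechanism to succeed. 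To repair (f) you should abandon the covering argument and argue as the paper does, via the rational ellipticity of the face pre-image and the cohomology of $N$ computed from \cite[Thm.~2.4]{Bre}.
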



\begin{proof}
Property (a)  follows from \cite[Prop.~2.5]{ChSk}. Indeed, $H^\odd(\orb;\Q)=0$ and $H^\odd(BG;\Q)=0$ imply that the differentials in the spectral sequence of the homotopy fibration  $\orb \to \orb_G \to BG$ are trivial. Therefore $H^*_G(\orb;\Q)=H^*(\orb_G;\Q)=H^*(\orb;\Q)\otimes H^*(BG;\Q)$, thus fulfilling the hypotheses of the aforementioned proposition.  Properties (b), (c) and (d) have been verified in Lemmas~\ref{L:Masuda_Panov_orb}, \ref{P:Bullet_point_3} and \ref{L:No_fake}, respectively. 


To see that property (e) holds, one must modify the proof of Lemma 4.2 of \cite{Wi} for the case of torus orbifolds only slightly. The original proof invokes \cite[Corollary 3.3.11]{AP} which, although stated only for rationally-elliptic $G$-CW-complexes,  also holds for compact spaces with finitely many orbit types, e.g.~torus orbifolds, as indicated on page 160 of \cite{AP}.

Finally, suppose that there is a $d$-dimensional face $F$ combinatorially equivalent to the face poset of $X:=[-1,1]^d/\{\pm \id\}$. Notice first that the standard linear, effective $T^d$ action on $(\sph^2)^d$ commutes with the diagonal antipodal map and, therefore, induces an effective $T^d$ action on $N=(\sph^2)^d/\ZZ_2$ with orbit space $X$. Thus, the quotient of the $T^d$ action on the pre-image of $F$ is combinatorially equivalent to the quotient of the $T^d$ action on $N$ and, in particular, the corresponding GKM-graphs are isomorphic. By 
the discussion before Theorem \ref{T:EqCoh},
their rational cohomology rings are the same. However, the pre-image of $F$ is rationally elliptic by \cite[Cor.\ 3.3.11]{AP}, while, on the other hand, $N$ is not:  Indeed, by \cite[Thm.\ 2.4]{Bre}, $H^*\left(N;\QQ\right)=H^*\big((\sph^2)^d;\QQ\big)^{\ZZ_2}$ and therefore the Betti numbers of $N$ satisfy $b_1(N)=b_2(N)=b_3(N)=0$, $b_4(N)=d(d-1)/2$. In particular $\dim_{\QQ} (\pi_4(N)\otimes\QQ)=b_4(N)$ and, if $N$ were rationally elliptic, Theorem 32.6 in \cite{FHT} would yield
\[
2d(d-1)=4\dim_\QQ (\pi_4(N)\otimes\QQ)\leq \sum_j 2j\dim_\QQ(\pi_{2j}(N)\otimes\QQ)\leq 2d
\]
which is not possible for $d>2$.
\end{proof}


\begin{proof}[Proof of Theorem \ref{T:main_thm}]
Following the arguments involved in proving \cite[Prop. 4.5]{Wi}, the properties established in Proposition \ref{P:comb-properties} are precisely those required to prove that the face poset of $\orb/G$ is combinatorially equivalent to the face poset of $Q=\prod_i \Delta^{n_i}\times \prod_j \Sigma^{n_j}$ as  in Examples \ref{eg:simplex}-\ref{eg:product}, i.e.~there is an isomorphism of face posets $\varphi: \mathcal{P}(\orb/G )\to \mathcal{P}(Q)$.  For each facet $F \in \mc{P}(Q)$, fix an isomorphism $\iota_F : S^1_F \to S^1_{\vphi^{-1}(F)}$.

With $Q$ as above, the moment-angle complex $Z_Q$ of $Q$, together with the action of $T_Q$ as discussed in Section \ref{SS:MANGLECPX}, is equivariantly homeomorphic to a product of spheres $\prod \sph^{n_i}$ equipped with a linear action.

The isomorphism $\varphi$ induces a label map $\vphi_* \lambda : T_Q \to G$, such that the restriction to each factor $S^1_F$ is given by $\lambda_{\varphi^{-1}(F)} \circ \iota_F$.  By setting $\hat\lambda = \vphi_* \lambda$, one can construct, as before, a torus orbifold $(\orb_Q, G)$, where $\orb_Q$ is the quotient of $\prod \sph^{n_i}$ by the linear and almost-free action of a subtorus of $T_Q$ complementary to $G$.

This is achieved as follows: The kernel $\hat{L}$ of   $\hat{\lambda}$ acts almost freely on $\prod \sph^{n_i}$, although it may not be connected. 
Therefore, $\prod \sph^{n_i}/\hat{L}$ is a torus  orbifold. 
Moreover, the identity component $\hat{L}^o$ of $\hat{L}$ is a  subtorus of $T_Q$.

Since the natural action of the finite group $\hat{L}/\hat{L}^o$ on $\prod\sph^{n_i}/\hat{L}^o$ extends to an action of the connected group \(T_Q/\hat{L}^o\), the induced action on cohomology is trivial.
 Hence, by \cite[Thm. 2.4]{Bre}, $\prod \sph^{n_i}/\hat{L}$ and $\prod\sph^{n_i}/\hat{L}^o$ have isomorphic rational cohomology rings.
Moreover, by Proposition~32.16 of \cite{FHT} and Corollary~2.7.9 of \cite{AP}, the minimal models of these spaces are formal and, therefore, isomorphic.
Hence, it may be assumed that $\hat{L}$ is connected.  In this case, define $\orb_Q=\prod \sph^{n_i}/\hat{L}$.

By construction, the torus orbifolds $(\orb,G)$ and $(\orb_Q,G)$ have
isomorphic labelled face posets, hence isomorphic GKM-graphs.
Therefore the rational cohomology rings of $\orb$ and $\orb_Q$  are
isomorphic, as discussed after Theorem~\ref{T:EqCoh}. But once again, the minimal models of these spaces are formal by Proposition~32.16 of \cite{FHT} and Corollary~2.7.9 of \cite{AP}. Since their cohomology rings are isomorphic, this implies that the spaces are rationally homotopy equivalent.

Since $H^*(\orb_G;\Q)=H^*(\orb;\Q)\otimes H^*(BG;\Q)$ and the minimal models of $\orb$ and $\orb_Q$ are formal, it follows again from Corollary~2.7.9 of \cite{AP} that the minimal models of $\orb_G$ and $(\orb_Q)_G$ are formal.  As $H^*(\orb_G;\Q)$ is isomorphic to $H^*((\orb_Q)_G; \Q)$, this ensures that the minimal models of the Borel constructions $\orb_G$ and $(\orb_Q)_G$ are isomorphic, hence $\orb_G \simeq_\Q (\orb_Q)_G$.

Furthermore, from the face-poset isomorphism $\vphi$, one obtains a commutative diagram as in \eqref{E:EqRHE}, where the horizontal arrows are clearly the isomorphisms induced by the rational homotopy equivalence $\orb \simeq_\Q \orb_Q$.  Therefore, $\orb$ is $G$-equivariantly rationally homotopy equivalent to $\orb_Q$.

Finally, if $\orb$ is a (torus) manifold, then all local groups are trivial and one can identify $\tilde{U}_p$ with $U_p$, $\tilde{G}_p$ with $G_p$, and so on.  Given any $p \in \orb$ with $\dim(G_p) = l$, Proposition \ref{P:Bullet_point_3} states that $p$ belongs to the closures $\ol{\Sigma}_i$, $i=1,\ldots, l$, of $l$ codimension-2 strata. By Proposition \ref{P:Bullet_point_3} again, each $\ol{\Sigma}_i$ is fixed by a different factor $S^1_i$ of $G_p^o=T^l$, and the $\ol{\Sigma}_i$ project to distinct facets $F_i$ of $\orb/G$.

By definition, $\lambda_{F_i}:S^1_{F_i}\to G$ sends $S^1_{F_i}$ isomorphically into $S^1_i\In G_p^o$ and, therefore, the label map $\lambda$ sends $T([p])=\prod_{i}S^1_{F_i}$ isomorphically into $G_p^o=\prod_iS^1_i$, where $[p]\in \orb/G$ is the image of $p$.  

In particular, the restriction of $\lambda$ to $T([p])$ has trivial kernel.  It then follows that the kernel of $\lambda$ has trivial intersection with each such torus $T([p])$.  Since the label map $\vphi_*\lambda$ has the same properties as $\lambda$, the kernel of $\vphi_*\lambda$ has trivial intersection with all isotropy subgroups of the $T_Q$ action on $Z_Q$ and, therefore, acts freely on $Z_Q$. Hence, $\orb_Q$ is also a manifold.

\end{proof}


\section{A family of examples}
\label{S:Example}


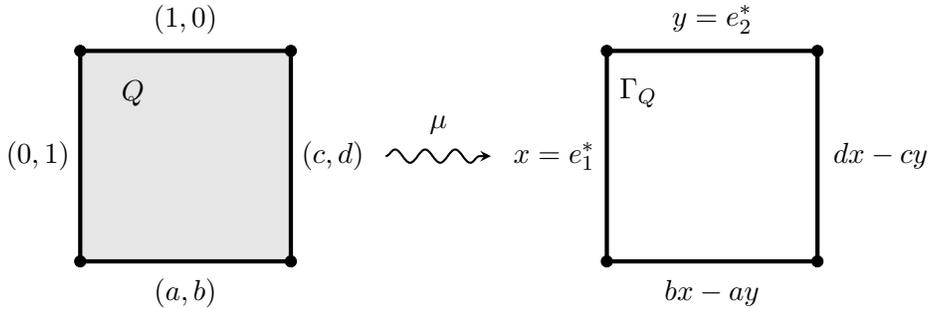
\begin{figure}[h]
  \begin{tikzpicture}[scale=1.4]
    \path[coordinate] (-1.5,1)  coordinate(A)
                (-3.5,1) coordinate(B)
                (-3.5,-1) coordinate(C)
                (-1.5,-1) coordinate(D);
    \fill[color = gray!20!white] (A) -- (B) -- (C) -- (D) -- cycle;

    \draw[ultra thick] (A) -- (B) -- (C) -- (D) -- (A);
    \draw[color = black] (-2.5,1.3) node{$(1,0)$};
    \draw[color = black] (-3.9, 0) node{$(0,1)$};
    \draw[color = black] (-2.5,-1.3) node{$(a, b)$};
    \draw[color = black] (-1.1, 0) node{$(c,d)$};
    \draw[color = black] (-3, 0.6) node{$Q$};
    \filldraw[black] (A) circle (1.5pt)
                     (B) circle (1.5pt)
                     (C) circle (1.5pt)
                     (D) circle (1.5pt);
   
    \path[coordinate] (3.5,1)  coordinate(E)
                (1.5,1) coordinate(F)
                (1.5,-1) coordinate(G)
                (3.5,-1) coordinate(H);

    \draw[ultra thick] (E) -- (F) -- (G) -- (H) -- (E);
    \draw[color = black] (2.5,1.3) node{$y = e_2^*$};
    \draw[color = black] (1, 0) node{$x = e_1^*$};
    \draw[color = black] (2.5,-1.3) node{$bx-ay$};
    \draw[color = black] (4.1, 0) node{$dx-cy$};
    \draw[color = black] (1.8, 0.6) node{$\Gamma_Q$};
    \filldraw[black] (E) circle (1.5pt)
                     (F) circle (1.5pt)
                     (G) circle (1.5pt)
                     (H) circle (1.5pt);
  \draw[thick, decorate, decoration = {snake, segment length = 4mm}, -stealth] (-0.6,0) -- (0.4,0);
  \draw[color = black] (-0.1, 0.3) node{$\mu$};
  \end{tikzpicture}
  \caption{(l) Labelled orbit space $Q$; (r) GKM-graph $\Gamma_Q$}
  \label{F:squares}
  \end{figure}


The family of examples in this section shows the necessity of including almost-free actions in the conclusion of Theorem \ref{T:main_thm}, and also gives an explicit demonstration of how to apply the GKM algorithm discussed in Section \ref{SS:MANGLECPX}.

Consider a $2$-torus $G$ and a $2$-dimensional nice manifold with corners, $Q$, whose boundary consists of four segments labelled as in the square on the left of Figure~\ref{F:squares}, where $a, b, c, d \in \ZZ$.  In this example the four edges and facets of $Q$ coincide.  The labels of the facets of $Q$ are 
the slopes $\in \ZZ^2$ corresponding to circle subgroups (tori of codimension one) in $G$.  By the discussion following Example \ref{eg:product}, in order to construct a $4$-dimensional torus orbifold with orbit space $Q$ the corresponding labels must be linearly independent whenever two facets intersect.  Assume therefore that 
\[
a, d, \det \!\begin{pmatrix} a & b \\ c & d \end{pmatrix} \neq 0.
\]
Since $Q = [0,1]^2$, it follows from Examples~\ref{eg:simplex} and \ref{eg:product} that the moment angle complex $Z_Q$ is equivariantly homeomorphic to $\sph^3 \times \sph^3$ equipped with the standard linear $T^4$ action.  There is, moreover, a surjective homomorphism $T^4 \to G$ whose kernel is a $2$-torus $K$ acting almost freely on $Z_Q \cong \sph^3 \times \sph^3$.  The resulting orbifold $\orb_Q = Z_Q/K \cong (\sph^3 \times \sph^3)/K$ is a simply-connected, rationally-elliptic, $4$-dimensional torus orbifold whose labelled orbit space $Q$ under the action of $G$ is as on the left of Figure~\ref{F:squares}.  Indeed, in this case the action of $K$ on $\sph^3 \times \sph^3$ can be written explicitly as
\begin{align*}
K \times (\SU(2) \times \SU(2)) &\to  \SU(2) \times \SU(2) \\
((z,w), (A,B)) &\mapsto 
\begin{pmatrix} 
\diag(z^{1-a} \bar w^c, 1) A \diag(z^a w^c, \bar z) \\
\diag( \bar z^b w^{1-d}, 1) B \diag( z^b w^d, \bar w)
\end{pmatrix}.
\end{align*}

It remains to demonstrate that not all such labelled orbit spaces $Q$ can be realised by torus manifolds, hence that one cannot always find a torus manifold which is rationally homotopy equivalent to a given torus orbifold.  This will be achieved by computing the cohomology ring and intersection form of the torus orbifold $\orb_Q$.

As discussed in Section~\ref{SS:MANGLECPX}, each edge $E$ of the labelled orbit space $Q$ can be assigned a weight $\mu(E) \in H^2(BG;\ZZ)$.  The resulting GKM-graph $\Gamma_Q$ is shown on the right of Figure~\ref{F:squares}.  As there are four vertices (corresponding to the fixed points of the $G$ action on $\orb_Q$), Theorem~\ref{T:EqCoh} implies that $H^*_G (\orb_Q; \Q)$, the equivariant cohomology algebra of $\orb_Q$, is isomorphic to the set of all $4$-tuples $(f_1, f_2, f_3, f_4) \in \bigoplus_{i=1}^4 H^*(BG;\Q) = \bigoplus_{i=1}^4 \Q[x,y]$ satisfying the relations
\begin{align*}
f_1 - f_2 &= m_1 y, \\
f_2 - f_3 &= m_2 x, \\
f_3 - f_4 &= m_3(bx-ay), \textrm{ and}\\
f_4 - f_1 &= m_4(dx -cy),
\end{align*}
where $m_1, m_2, m_3, m_4 \in \Q[x,y]$, and the ring structure is given by coordinate-wise multiplication.  It is straightforward to check that the equivariant cohomology of $\orb_Q$ is then generated as a $\Q[x,y]$-module by ${\bf 1} = (1,1,1,1)$, $u = (0,-ay, bx-ay, 0)$, $v = (0, -cy, dx-cy, dx-cy)$ and $w = (0, xy, 0, 0)$, of degree $0,2,2$ and $4$ (in $H^*_G (\orb_Q; \Q)$) respectively.  Clearly ${\bf 1}$ is the unit element.

As $H^{\odd}_G (\orb_Q; \Q) = 0$ (Lemma~\ref{L:fixed points isolated}) and, hence, $H^*_G (\orb_Q; \Q)$ is a free $H^*(BG;\Q)$-module, it follows that the rational cohomology of $\orb_Q$ is given by
\[
H^*(\orb_Q;\Q) = H^*_G (\orb_Q; \Q) / (R^+ \cdot H^*_G (\orb_Q; \Q)),
\]
where $R^+ = H^{>0}(BG;\Q)$ and $R^+ \cdot H^*_G (\orb_Q; \Q)$ is the set of all $4$-tuples of the form $m_1 {\bf 1} + m_2 u + m_3 v + m_4 w$, for polynomials $m_1, \dots, m_4 \in \Q[x,y]$ with zero constant term.  Therefore, letting $\alpha$, $\beta$ and $\gamma$ in $H^*(\orb_Q;\Q)$ be the classes represented by $u$, $v$ and $w$ respectively, $H^2(\orb_Q;\Q)$ is generated (over $\Q$) by $\alpha$ and $\beta$, $H^4(\orb_Q;\Q)$ by $\gamma$, and $H^{i}(\orb_Q;\Q) = 0$, $i \neq 0,2,4$.  Moreover, the ring structure is given by the relations
\[
\alpha^2 = ab \gamma, \ \beta^2 = cd \gamma, \ \textrm{and} \ \alpha \beta = ad \gamma.
\]
Indeed, this implies that $\alpha (d\alpha - b\beta) = 0$ and $\beta(a \beta - c \alpha) = 0$.  

Whenever either $b = 0$ or $c = 0$, it is easy to see that one can find generators $\tilde \alpha, \tilde \beta \in H^2(\orb_Q;\Q)$ such that $\tilde \alpha^2 = 0$, $\tilde \beta^2 = 0$ and $\gamma = \tilde \alpha \tilde \beta$.

On the other hand, if $bc \neq 0$ (by assumption $ad(ad-bc) \neq 0$), then the generators $\tilde \alpha = \tfrac{1}{b}\alpha$ and $\tilde \beta = \tfrac{a}{b}(d \alpha - b \beta)$ satisfy 
\[
\tilde \alpha \tilde \beta = 0 \ \textrm{and} \ 
\tilde \beta^2 + ad(ad-bc) \tilde \alpha^2 = 0.
\]
Furthermore, $\tilde \alpha^2$ generates $H^4(\orb_Q;\Q)$ and the intersection form is given by $\diag(1, -ad(ad-bc))$.

However, $\sph^4$, $\CP^2$, $\CP^2 \# \pm \CP^2$ and $\sph^2 \times \sph^2$ are the only closed, simply-connected, smooth, rationally-elliptic $4$-manifolds.  Therefore, if $\orb_Q$ were to be rationally homotopy equivalent to such a manifold, it would have intersection form either $\diag(1, \pm 1)$ or $\left(\begin{smallmatrix} 0&1 \\ 1&0 \end{smallmatrix}\right)$, corresponding to $\CP^2 \# \pm \CP^2$ or $\sph^2 \times \sph^2$.  This is clearly not true for generic $a,b,c,d \in \ZZ$.


\section{Slice-maximal torus actions}
\label{S:CoreThm}


The goal of this section is to prove Theorem \ref{T:CORE_THM}.  To that end, let $M$ be a closed, smooth, simply-connected, rationally-elliptic, $n$-dimensional manifold admitting a slice-maximal action by a torus $T_M$ of rank $k$.   If $s$ denotes the maximal dimension of an isotropy subgroup, the action being slice maximal is equivalent to the identity $n = k+s$.

Under these hypotheses, there is a torus $K_M \In T_M$ acting almost freely on $M$, with $\dim K_M=k-s$. Since the action of $K_M$ on $M$ is almost free, the orbit space $M/K_M$ is an orbifold $\orb$. Moreover, $\orb$ is rationally elliptic and has an induced action of the torus $T_\orb=T_M/K_M$ of rank $s=\frac{1}{2}\dim\orb$. The images of the $T_M$-orbits of (minimal) dimension $k-s$ under the quotient map $M\to \orb$ correspond to fixed points of the $T_\orb$ action. Hence, $(\orb, T_\orb)$ is a simply-connected, rationally-elliptic torus orbifold. 

By Theorem \ref{T:main_thm}, $\orb$ is $T_\orb$-equivariantly rationally homotopy equivalent to a simply-connected torus orbifold $(\horb=\hP/\hL, T_\orb)$, where $\hP$ is a product of spheres of dimension $\geq 3$ and $\hat{L}$ is a compact abelian Lie group acting linearly and almost freely on $\hat{P}$. Recall from Section~\ref{S:PROOF} that $\hL$ is defined as the kernel of the label map $\lambda: T_{Q}\to T_{\orb}$, where $T_Q=\prod_F S^1_F$ is the product of a copy of $S^1$ for each facet of the orbit space $Q=\orb/T_{\orb} = M/T_M$. Since $\lambda$ is onto, this yields an isomorphism $T_\orb = T_Q/\hL$.

Consider the map $\pi: M\to M/K_M=\orb$, $p\in M$ and $p^*=\pi(p)\in \orb$. A \((T_{\orb})_{p^*}\)-invariant good local chart around $p^*$ is given by $\tilde{U}_{p^*}=\nu_p(K_M(p))$ with map $\tilde{U}_{p^*}\to \orb$ given by the composition $\nu_p(K_M(p))\stackrel{\exp}{\longrightarrow} M\stackrel{\pi}{\longrightarrow} \orb$. The local group at \(p^*\) is given by \(\Gamma_{p^*}=K_{M}\cap (T_{M})_{p}\).  Thus, following the notation of Proposition \ref{P:LES}, one has \((\tilde{T}_{\orb})_{p^*}\subset T_M\) and
\[
(\tilde{T}_{\orb})^o_{p^*}=(T_{M})^o_{p}.
\]

In particular, the slice representation of $(T_{M})^o_{p}$ on $\nu_p(T_M(p))$ coincides with the slice representation of $(\tilde{T}_\orb)_{p^*}$ on $\tilde{\nu}_{p^*}(T_\orb(p^*))$ as in the Slice Theorem (Theorem \ref{T:slice-thm}). From Proposition \ref{P:right-codimension}(b), this action is a sum of a trivial summand and a maximal-rank summand. Such actions belong to a class called \emph{polar actions} and, since every slice representation of $T_M$ is polar, the $T_M$ action on $M$ is \emph{infinitesimally polar} (see \cite{PT,GZ}). 

By definition of the label map, each $\lambda_F:S^1_F\to (T_{\orb})^o_{q^*}\In T_{\orb}$, with $q^*\in \orb$ a point projecting to $F$, factors through $(\lambda_M)_F:S^1_F\to (\tilde{T}_{\orb})^o_{q^*}=(T_M)^o_q$ and, therefore, the map $\lambda:T_Q\to T_\orb$ naturally admits a lift to a map $\lambda_M:T_Q\to T_M$.


\begin{lem}
Let $\lambda_M:T_Q\to T_M$ be the above-defined lift of the label map. Then:
\begin{enumerate}
\item $\lambda_M$ is surjective.
\item For every $p\in M$ projecting to $q\in Q$, the torus $T(q)\In T_Q$ is mapped isomorphically onto $(T_M)^o_p$.
\end{enumerate}
\end{lem}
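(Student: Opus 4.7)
The plan is to prove (b) by a direct local analysis at $p$ using the orbifold chart description preceding the lemma, and then to deduce (a) from (b) by combining the resulting structure of $\lambda_M(T_Q)$ with the bound on the rank of almost-free torus actions forced by slice-maximality.

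For (b), fix $p\in M$ with image $q\in Q$ and let $q^*\in\orb$ denote its image in $\orb$, so that $(T_M)^o_p=(\tilde T_\orb)^o_{q^*}$ by the construction at the start of Section~\ref{S:CoreThm}. Applying Proposition~\ref{P:right-codimension}(b) to $\orb$ at $q^*$ splits the slice representation of $(T_M)^o_p$ on $\nu_p(T_M(p))$ as a trivial summand plus the standard $T^l$-action on $\CC^l$, where $l=\dim(T_\orb)_{q^*}$. This produces a canonical decomposition $(T_M)^o_p=S^1_1\x\cdots\x S^1_l$, each $S^1_i$ being the identity component of the isotropy of a generic point on one of the $l$ codimension-$2$ strata of $\orb$ whose closure contains $q^*$. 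By Proposition~\ref{P:Bullet_point_3} these strata correspond bijectively to the $l$ facets $F_1,\dots,F_l$ of $Q$ containing $q$, so $T(q)=\prod_i S^1_{F_i}$; by the very definition of the lift $(\lambda_M)_{F_i}:S^1_{F_i}\to(\tilde T_\orb)^o_{p^{(i)}_*}=S^1_i$ each factor is an isomorphism. Taking the product of these factor-wise isomorphisms yields the desired isomorphism $T(q)\to(T_M)^o_p$.

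For (a), set $H:=\lambda_M(T_Q)\In T_M$. Applying (b) at any vertex $v$ of $Q$ shows that $H$ contains an $s$-torus $(T_M)^o_{p_v}$ projecting isomorphically onto $T_\orb$ under $\pi$, so $\pi(H)=T_\orb$, equivalently $H\cdot K_M=T_M$. Surjectivity of $\lambda_M$ therefore reduces to the inclusion $K_M\In H$. Any subtorus $L\In T_M$ complementary to $H$ (so that $L\cdot H=T_M$ and $L\cap H$ is finite) meets every $(T_M)^o_p$ trivially by (b), and hence $L$ acts almost freely on $M$. A direct inequality shows that any almost-free subtorus of $T_M$ has rank at most $k-s$: taking a point $p$ with maximal isotropy $(T_M)^o_p$ of dimension $s$ and an almost-free subtorus $L'\In T_M$, one has $L'\cap(T_M)^o_p$ finite, so $L'+(T_M)^o_p$ has dimension $\dim L'+s\leq k$, yielding $\dim L'\leq k-s=\dim K_M$. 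Applying this argument to the subtorus generated by $L$ and $K_M$ (which one expects still to act almost freely, since its connected stabilizers sit inside $(L\cdot K_M)\cap H$) forces $L\In K_M$, and a final dimension count using $H\cdot K_M=T_M$ and $L\cap H$ finite then forces $L=\{e\}$ and so $H=T_M$.

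The main obstacle is verifying that $L\cdot K_M$ does act almost freely on $M$, since a priori the intersection $(L\cdot K_M)\cap(T_M)^o_p$ could be positive-dimensional even though $L\cap(T_M)^o_p$ and $K_M\cap(T_M)^o_p$ are each finite. To handle this I would exploit the combinatorial form of $Q$ from Proposition~\ref{P:comb-properties} together with the decomposition $Q=\prod_i\Delta^{n_i}\x\prod_j\Sigma^{n_j}$ used in the proof of Theorem~\ref{T:main_thm}; on each factor, the standard models in Examples~\ref{eg:simplex}--\ref{eg:product} make both $\lambda$ and $\lambda_M$ completely explicit and render the surjectivity of $\lambda_M$ transparent. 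The substantive content is that the slice-maximal hypothesis on $M$ rules out any twisting of these standard models that would enlarge the kernel of $\lambda_M$ beyond what is already accounted for by the orbifold projection $\orb=M/K_M$.
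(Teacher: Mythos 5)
Your part (b) follows the paper's own argument essentially verbatim (decompose $(T_M)^o_p$ into the circle factors attached via Propositions \ref{P:right-codimension} and \ref{P:Bullet_point_3} to the facets through $q$, and note that each $(\lambda_M)_{F_i}$ is by construction an isomorphism onto the corresponding factor), and it is correct. The problem is part (a).

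Your argument for (a) has a genuine gap, and it cannot be repaired along the lines you sketch, because it nowhere uses the simple-connectedness of $M$, which is the essential input. Even granting the almost-freeness of $L\cdot K_M$ (the obstacle you flag yourself), the concluding step is a non sequitur: from $\dim(L\cdot K_M)\leq k-s=\dim K_M$ you may conclude $L\In K_M$, but nothing then forces $L=\{e\}$. The possibility $L=K_M$ with $K_M\cap H$ finite is consistent with every inequality you invoke, and it is precisely the situation you are trying to exclude, since you reduced surjectivity to $K_M\In H$. To see that some topological input beyond your ingredients is indispensable, take $M=\sph^1\times\sph^2$ with the product $T^2$-action (rotation on each factor). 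This action is slice-maximal ($n=3$, $k=2$, $m=1$), $M$ is closed and rationally elliptic, $s=1$, $K_M=S^1\times 1$ acts freely, and the image $H=\lambda_M(T_Q)$ is the single isotropy circle $1\times S^1\neq T^2$: here $\pi(H)=T_\orb$, the rank bound for almost-free subtori holds, and $L=K_M$ is a complement of $H$, so all the facts you use are satisfied while the conclusion fails. Only $\pi_1(M)=0$ rules such behaviour out, and for the same reason the explicit models for $Q=\prod_i\Delta^{n_i}\times\prod_j\Sigma^{n_j}$ cannot close the gap: they encode the labels $\lambda$ on $\orb$, not the lift $\lambda_M$, and the combinatorics of $Q$ does not detect whether the isotropy circles generate $T_M$ (in the example above $Q$ is an interval, of the allowed combinatorial type).

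The paper's proof of (a) is instead a fundamental-group argument. It first shows that $\pi_1(T_M)\to\pi_1(M_{\reg})$ is injective, using Lytchak's theorem (no exceptional orbits, since the action is infinitesimally polar and $M$ is simply connected) together with Bredon's result that $Q_{\reg}$ is rationally acyclic and the Hurewicz theorem. It then uses $\pi_1(M)=0$ a second time: a loop in a principal orbit bounds a disk in $M$, which after a transversality deformation meets the codimension-two strata (the pre-images of the facets) in finitely many points; the loop is then homotopic to a concatenation of nooses around these intersection points, each of which is an iterate of an isotropy circle $(T_M)^o_{x_i}$ and hence lies in the image of $(\lambda_M)_*$ on $\pi_1$. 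Surjectivity of $\lambda_M$ follows. To fix your proof you would need to incorporate an argument of this kind, or some other genuine use of the hypothesis $\pi_1(M)=0$.
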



\begin{proof}
Part (a). Let $M_{\reg}$ denote the collection of principal orbits, and $Q_{\textrm{reg}}=M_{\textrm{reg}}/T_M$. Since the $T_M$ action on $M_\reg$ is free, there is a principal bundle
\[
T_M\to M_\reg\to Q_\reg.
\]
Since $M$ is simply connected and the $T_M$-action on $M$ is infinitesimally polar, by Theorem 1.8 of \cite{Lyt} there are no orbits with finite isotropy and, therefore, the set $Q_\reg$ consists precisely of the orbits of maximal dimension. On the other hand, $Q_\reg$ is the quotient $\orb_\reg/T_\orb$, where $\orb_\reg$ also consists of the orbits of $\orb$ of maximal dimension. Because $\orb$ is a rationally elliptic torus orbifold, $H^{\odd}(\orb;\QQ)=0$. Therefore, Corollary 1 of \cite{bred_free} can be applied to conclude that $Q_\reg$ is rationally acyclic. Since $\pi_1(Q_\reg,[p_0])=\pi_1(Q,[p_0])=0$, by Hurewicz $\pi_2(Q_\reg,[p_0])\otimes \QQ=H_2(Q_\reg;\QQ)=0$ and, in particular, $\pi_2(Q_\reg,[p_0])$ is torsion. From the long exact sequence in homotopy for $M_\reg\to Q_\reg$, it follows that the kernel of $\pi_1(T_M)\to \pi_1(M_\reg,p_0)$ must be torsion as well, but since $\pi_1(T_M)$ is free abelian, the kernel must be trivial. Therefore, the map $\pi_1(T_M)\to \pi_1(M_\reg,p_0)$ is injective.

In order to prove that $\lambda_M$ is surjective, it is enough to show that it induces a surjective map $(\lambda_M)_*:\pi_1(T_Q)\to \pi_1(T_M)$. Letting $\Omega\In \pi_1(T_M)$ denote the image of $(\lambda_M)_*$, from the discussion above it is enough to prove that $\Omega$ has the same image as $\pi_1(T_M)$ in $\pi_1(M_{\reg}, p_0)$.

For any $\alpha \in \pi_1(T_M)$, its image in $\pi_1(M_\reg,p_0)$ is represented by some loop $C$ in a principal $T_M$-orbit in $M$ and, since $M$ is simply connected, hence bounds a two-dimensional disk $D$ in $M$.  The pre-images of the facets of $Q$ are codimension-$2$ submanifolds of $M$.  Hence, by performing a suitable deformation, it may be assumed without loss of generality that $D$ intersects only finitely many of these codimension-$2$ submanifolds, in only finitely many points $x_1, \dots, x_N$, and that these intersections are transversal.  As $D$ is simply connected, $C$ is homotopy equivalent (within $D$) to a concatenation of lassos based at $p_0 \in C$, each of which has a noose $\gamma_i$ which is a circle around a single intersection point $x_i$, $i \in \{1, \dots, N\}$.

For each $i \in \{1, \dots, N\}$, in a sufficiently small neighbourhood of the intersection point $x_i$, the disk $D$ can be assumed to coincide with the normal slice to the $T_M$-orbit through $x_i$.  By the Slice Theorem, a noose $\gamma_i$ around $x_i$ can be assumed to lie in an orbit of the slice action of the one-dimensional isotropy subgroup $(T_M)_{x_i}$, hence, to be some (positive or negative) iterate of the circle $(T_M)_{x_i}^o$.

Together with the isomorphisms arising via change of base points, the above discussion ensures that $C$ is homotopic to the concatenation of the $\gamma_i$, each of which represents an element in $\pi_1(M_\reg,p_0)$ in the image of $\Omega$.
\\

\noindent Part (b). This follows closely the last part of the proof of Theorem A. Given any $p \in M$ with $\dim((T_M)_p) = l$, Proposition \ref{P:Bullet_point_3} states that the image $p^*\in \orb$ of $p$ belongs to the closures $\ol{\Sigma}_i$ of codimension-2 strata $\Sigma_i$, $i=1,\ldots, l$. By Proposition \ref{P:Bullet_point_3} again, each $\ol{\Sigma}_i$ projects to a different facet $F_i$ of $Q$, and it is fixed by a different factor $S^1_i$ of $(T_\orb)_{p^*}^o=T^l$, which lifts to a factor $\tilde{S}^1_i$ of $(\tilde{T}_{\orb})_{p^*}^o=(T_M)_p^o$.

By definition, $(\lambda_M)_{F_i}:S^1_{F_i}\to T_M$ sends $S^1_{F_i}$ isomorphically into $\tilde{S}^1_i\In (T_M)_p^o$ and, therefore, the label map $\lambda$ sends $T([p])=\prod_{i}S^1_{F_i}$ isomorphically into $(T_M)_p^o$, where $[p]\in Q$ is the image of $p$. 
\end{proof}

Let $K_{\hP}\In T_Q$ denote the kernel of $\lambda_M$ and let $\hM$ be the quotient $\hP / K_{\hP}$.  The group $K_{\hM}=\hL/K_{\hP}$ acts almost freely on $\hat{M}$, with quotient $\horb=\hP/\hL$.
Recall, furthermore, that there is an isomorphism 
\[
\vphi : \mathcal{P}(Q)\to \mathcal{P}(\hat{Q}),
\]
of face posets of the quotients $Q=\orb/T_\orb = M/T_M$ and $\hat{Q}=\horb/T_{\orb} = \hP/T_Q$, such that, for every face $F$ of $\mc{P}(Q)$, $F$ and $\vphi(F)$ have the same isotropy.

All of the above information is contained in the following diagram, where the label on each arrow denotes the quotient by the given torus, the dashed line indicates rational homotopy equivalence (not a map!), and the dotted line indicates that there is an isomorphism of face posets $\vphi : \mathcal{P}(Q)\to \mathcal{P}(\hat{Q})$.
\[
\xymatrix{
& && \hP \ar[ddl]^{\hL} \ar[dl]_{K_{\hP}} \ar@/^1pc/[dddl]^{T_Q}\\
M \ar[d]^{K_M} \ar@/_1pc/[dd]_{T_M} && \hM \ar[d]_{K_{\hM}} & \\
\orb \ar@{--}[rr]_{\simeq_\Q} \ar[d]^{T_\orb} & & \horb \ar[d]_{T_\orb} & \\
Q \ar@{.}[rr] &&  \hat Q &
}
\]
It remains to show that the space $\hM$ is a manifold and that $M$ is $T_M$-equivariantly rationally homotopy equivalent to $\hM$ equipped with the induced action of the torus $T_{\hM} = T_Q/K_{\hP}$.



\begin{lem}
\label{L:FREE_SUBTORUS}
The group $K_{\hP}$ acts freely on $\hP$ and, hence, the quotient $\hM=\hP/K_{\hP}$ is a (topological) manifold.
\end{lem}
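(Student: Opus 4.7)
The plan is to mirror the argument used at the very end of the proof of Theorem \ref{T:main_thm} to handle the manifold case, but now working with the lift $\lambda_M \colon T_Q \to T_M$ in place of the label map $\lambda \colon T_Q \to T_\orb$. The key input is the preceding lemma, particularly part (b): for every $p \in M$ with image $q \in Q$, the restriction $\lambda_M|_{T(q)} \colon T(q) \to (T_M)_p^o$ is an isomorphism.

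The proof would proceed in a handful of short steps. First, recall from the discussion of the moment-angle complex in Section \ref{SS:MANGLECPX} that $\hP$ (identified with $Z_Q$ via the face-poset isomorphism $\vphi$ and Examples \ref{eg:simplex}--\ref{eg:product}) carries a linear $T_Q$-action whose isotropy group at any point lying over $q \in Q$ is precisely $T(q) \subset T_Q$, and these tori exhaust all isotropy subgroups that arise. Second, since $\lambda_M|_{T(q)}$ is an isomorphism (in particular injective), the kernel $K_{\hP} = \ker(\lambda_M)$ satisfies
\[
K_{\hP} \cap T(q) = \{e\} \quad \text{for every } q \in Q.
\]
Thus $K_{\hP}$ meets every isotropy subgroup of the $T_Q$-action on $\hP$ trivially, which is exactly what it means for the subtorus $K_{\hP} \subset T_Q$ to act freely on $\hP$. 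Finally, since a compact Lie group acting freely on a topological manifold yields a topological manifold as quotient, $\hM = \hP / K_{\hP}$ is a manifold.

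I do not anticipate a substantive obstacle: once part (b) of the preceding lemma is in hand, the argument is essentially a transcription of the last paragraph of the proof of Theorem \ref{T:main_thm}, with $\lambda_M$ taking the role of $\lambda$ and with $(T_M)_p^o$ in place of $G_p^o$. The only minor bookkeeping point is to confirm that the face-poset isomorphism $\vphi \colon \mc{P}(Q) \to \mc{P}(\hat Q)$ transports the isotropy structure of the $T_Q$-action on $\hP$ correctly, so that the tori $T(q)$ appearing as isotropy groups on $\hP$ are indeed the same tori on which we have applied the isomorphism property of $\lambda_M$; this is immediate from the construction of $\lambda_M$ as a lift of $\vphi_* \lambda$ and the functoriality of the moment-angle construction.
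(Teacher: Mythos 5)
Your proposal is correct and follows essentially the same route as the paper: identify the isotropy groups of the $T_Q$-action on $\hP$ as the tori $T(q)$, use the injectivity of $\lambda_M$ on these tori (part (b) of the preceding lemma, transported via the face-poset isomorphism $\vphi$ between faces of $Q$ and $\hat Q$) to conclude $K_{\hP}\cap T(q)=\{e\}$, and deduce freeness of the action and that $\hM$ is a manifold. The bookkeeping point you flag is handled in the paper exactly as you suggest, by matching the minimal face $\hat F_q$ of $\hat Q$ with $F_q=\vphi^{-1}(\hat F_q)$ in $Q$.
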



\begin{proof}
Recall that $\hat{P}$ is defined by $\hat{Q}\times T_Q/\sim$, where $(q,t)\sim (q',t')$ if and only if $q=q'$ and $t{t'}^{-1}\in T(q)=\prod_{F \ni q} S^1_F \In T_Q$. The action of $T_Q$ on $\hat{P}$ is given by left multiplication on the second factor. The action of $K_{\hat{P}}$ on $\hat{P}$ is simply the restriction of the $T_Q$ action to $K_{\hP}$ and, therefore, the isotropy subgroup of the $K_{\hP}$ action at a point $[(q,t)]\in \hat{P}$ is given by $T(q) \cap K_{\hat{P}}$.

Let $\hat{F}_q$ denote the face of $\hat{Q}$ of minimal dimension containing $q$, and $F_q=\vphi^{-1}(\hat{F}_q)$ the corresponding face in $Q$, given by the face-poset isomorphism $\vphi : \mathcal{P}(Q)\to \mathcal{P}(\hat{Q})$. Since $M$ is a manifold, $T(q)$ maps injectively via $\lambda_M$ into the isotropy of $T_M$ at a point $x\in M$ in the pre-image of $F_q$. Thus $T(q) \cap \ker(\lambda_M) = T(q) \cap K_{\hP}$ is trivial, as desired.
\end{proof}


\begin{lem}
  The manifolds \(\hP/K_{\hP}\) and \(\hP/K_{\hP}^o\) are rationally
  homotopy equivalent. Therefore, in the following it may be assumed that
  $K_{\hP}$ is connected and \(\hat{M}\) is simply connected. 
\end{lem}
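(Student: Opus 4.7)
The plan is to mirror the step in the proof of Theorem~\ref{T:main_thm} where the possibly-disconnected torus $\hat L$ was replaced by its identity component $\hat L^o$ at the cost of a rational homotopy equivalence. The first task is to verify that $\hP/K_{\hP}^o$ is a simply-connected manifold: since $K_{\hP}^o$ is a closed connected subgroup of the torus $T_Q$, it is itself a torus, and by Lemma~\ref{L:FREE_SUBTORUS} it acts freely on $\hP$. Consequently $\hP/K_{\hP}^o$ is a smooth manifold, and simple-connectedness follows at once from the long exact sequence of the principal bundle $K_{\hP}^o \to \hP \to \hP/K_{\hP}^o$, using that $\hP$ is a simply-connected product of spheres of dimension $\geq 3$ and that $K_{\hP}^o$ is connected.

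The next task is to compare cohomology. The finite quotient $\Gamma := K_{\hP}/K_{\hP}^o$ acts freely on $\hP/K_{\hP}^o$ with quotient $\hP/K_{\hP}$. Because $K_{\hP}^o$ is normal in the abelian group $T_Q$, this $\Gamma$-action extends to an action of the connected Lie group $T_Q/K_{\hP}^o$, and therefore induces the trivial action on rational cohomology. By \cite[Thm.\ 2.4]{Bre}, the natural projection thus induces a ring isomorphism
\[
H^*(\hP/K_{\hP};\QQ) \;\cong\; H^*(\hP/K_{\hP}^o;\QQ)^{\Gamma} \;=\; H^*(\hP/K_{\hP}^o;\QQ).
\]

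To promote this cohomological isomorphism to a rational homotopy equivalence, I would invoke formality, exactly as in the proof of Theorem~\ref{T:main_thm}: Proposition~32.16 of \cite{FHT} and Corollary~2.7.9 of \cite{AP} ensure that both $\hP/K_{\hP}$ and $\hP/K_{\hP}^o$ are formal, so that an isomorphism of their rational cohomology rings lifts to an isomorphism of minimal Sullivan models. This establishes $\hP/K_{\hP} \simeq_\QQ \hP/K_{\hP}^o$. The main obstacle is precisely this formality step, but since the argument is verbatim the one already carried out in the proof of Theorem~\ref{T:main_thm}, the same references apply without change. Under this rational homotopy equivalence, one may now replace $K_{\hP}$ by its identity component throughout, so that $K_{\hP}$ becomes connected and $\hat M = \hP/K_{\hP}$ becomes simply connected by the first step above.
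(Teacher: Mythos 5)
Your reduction to the identity component is set up correctly until the last step: $K_{\hP}^o$ acts freely by Lemma~\ref{L:FREE_SUBTORUS}, so $\Gamma=K_{\hP}/K_{\hP}^o$ acts freely on the simply-connected manifold $\hP/K_{\hP}^o$, its action extends to the connected group $T_Q/K_{\hP}^o$ and is therefore trivial on rational cohomology, and Bredon's theorem gives the ring isomorphism $H^*(\hP/K_{\hP};\QQ)\cong H^*(\hP/K_{\hP}^o;\QQ)$. The gap is the formality step. Proposition~32.16 of \cite{FHT} and Corollary~2.7.9 of \cite{AP} are invoked in the proof of Theorem~\ref{T:main_thm} for the spaces $\prod\sph^{n_i}/\hat L$ and $\prod\sph^{n_i}/\hat L^o$, which are torus orbifolds with $H^{\odd}=0$, i.e.\ rationally elliptic with positive Euler characteristic; that is precisely the hypothesis those results require. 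The spaces in the present lemma, $\hP/K_{\hP}=\hM$ and $\hP/K_{\hP}^o$, are quotients by a smaller torus: rationally they are total spaces of principal $K_{\hM}$-bundles over $\horb$ and $\hat{\orb}'$, they have non-trivial odd-degree rational cohomology in general, and neither cited result applies to them. Such total spaces need not be formal, and the paper nowhere claims they are --- indeed the whole machinery of Theorem~\ref{T:BUNDLE_RHEQ} and Proposition~\ref{P:COMM_DIAG} exists precisely because $M$ and $\hM$ cannot be compared through their cohomology rings alone. So the claim that the argument is ``verbatim'' that of Theorem~\ref{T:main_thm} does not hold, and as written the proof has a genuine gap at this point.

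The paper's proof circumvents this by never asserting formality of the total spaces: it compares the fibrations $K_{\hM}\to \hP/K_{\hP}^o\to \hat{\orb}'$ and $K_{\hM}/(\Gamma\cap K_{\hM})\to \hP/K_{\hP}\to \horb$, observes that the $\Gamma$-actions on the fibres and on the bases are cohomologically trivial and that these corner spaces \emph{are} formal (the bases have $H^{\odd}=0$), so the outer maps are rational homotopy equivalences; since a model of each total space is the tensor product of the models of its fibre and base, the map between the total spaces is then also a rational homotopy equivalence. Alternatively, your cohomological computation could be salvaged without any formality: since the $\Gamma$-action extends to a connected group, each element of $\Gamma$ is homotopic to the identity, hence $\Gamma$ acts trivially on all homotopy groups of $\hP/K_{\hP}^o$, the quotient $\hP/K_{\hP}$ is nilpotent, and the finite covering $\hP/K_{\hP}^o\to\hP/K_{\hP}$, which induces an isomorphism on rational cohomology, is therefore a rational homotopy equivalence. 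Either route repairs the argument, but the formality appeal you make is not justified by the references you cite.
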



\begin{proof}
  It will be shown that the orbit map of the $\Gamma=K_{\hP}/K_{\hP}^o$-action on
  \(\hP/K_{\hP}^o\) induces a rational homotopy equivalence \(\hP/K_{\hP}^o\rightarrow\hP/K_{\hP}\).
  The \(\Gamma\) action commutes with the $K_{\hM}$-action on
  \(\hP/K_{\hP}^o\) and, therefore, induces a $\Gamma$ action on the orbifold
  \(\hat{\orb}'=(\hP/K_{\hP}^o)/K_{\hM}\) with orbit space \(\hat{\orb}=(\hP/K_{\hP})/K_{\hM}\).
  Moreover, there is a commutative diagram
\[
\xymatrix{
K_{\hM} \ar[r]\ar[d]&K_{\hM}/(\Gamma\cap K_{\hM}) \ar[d]\\
\hP/K_{\hP}^o\ar[r]\ar[d]&\hP/K_{\hP} \ar[d]\\
\hat{\orb}'\ar[r]&\hat{\orb}
}
\]
Here the top and bottom maps are rational homotopy equivalences, since
the \(\Gamma\)-actions on \(K_{\hM}\) and \(\hat{\orb}'\) induce trivial actions on cohomology
and the spaces in the corners of the diagram are formal.
Because a model for the spaces in the middle is given by the tensor
product of the models for the corresponding top and bottom spaces, it
follows that the map in the middle is a rational homotopy equivalence.
Hence, it may be assumed that \(K_{\hP}\) is connected.
\end{proof}


Observe now that, since the torus $K_{\hM}={\hL}/K_{\hP}$ acts almost freely on $\hM$ with $\hM/K_{\hM}= \hP/\hL = \horb$,  
the projection $\hat{M}\to \hat{\orb}$ is, up to rational homotopy, a principal $K_{\hat{M}}$-bundle.

The label map $\lambda_M:T_Q\rightarrow T_M$ described above descends to an isomorphism $\ul{\lambda}_M:T_{\hat{M}}\to T_M$ with inverse $\mu_M: T_M\to T_{\hat{M}}$. 
Since  $\lambda: T_Q\to T_\orb$ factors through $\lambda_M$, there is an induced map $\hat{\pi}:T_{\hat{M}}\to T_\orb$ with kernel $K_{\hat{M}}$. Then the following diagram commutes
\begin{equation}
\label{E:Tori}
\xymatrix{
T_M\ar[r]^{\pi} \ar[d]_{\mu_M} &  T_\orb \ar[d]^{=} \\
T_{\hat{M}} \ar[r]^{\hat{\pi}} &  T_\orb
}
\end{equation}
where the vertical maps are isomorphisms. Moreover, there is an induced isomorphism $\mu_K:K_M\to K_{\hat{M}}$ given by the restriction of $\mu_M$ to $K_{M}$.   Therefore, $M\to \orb$ and $\hat{M}\to \hat{\orb}$ can be thought of as rational homotopy principal $K_M$-bundles, and the goal is to show that $M$ and $\hat{M}$ are rationally homotopy equivalent.


\begin{thm}
\label{T:BUNDLE_RHEQ}
Let $X$, $Y$ be rationally homotopy equivalent spaces, and let $\phi:H^2(Y;\QQ)\to H^2(X;\QQ)$ be the isomorphism induced by a rational equivalence. Moreover, let $T$ be a $k$-torus and let $\xi_X:E_X\to X$, $\xi_Y:E_Y\to Y$ be rational homotopy principal $T$-bundles with classifying maps $\rho_X:X\to BT$, $\rho_Y:Y\to BT$.
Suppose, finally, that there is a map $\beta:H^2(BT;\Q)\to H^2(BT;\Q)$ such that the diagram
\begin{equation}
\label{E:DIAG_BUNDLE_RHEQ}
\xymatrix{
H^2(X;\QQ)& H^2(Y;\QQ) \ar[l]_{\phi}\\
H^2(BT;\QQ) \ar[u]^{\rho_X^*} & H^2(BT;\QQ) \ar[u]^{\rho_Y^*} \ar[l]_{\beta}
}
\end{equation}
commutes. Then $E_X$ is rationally homotopy equivalent to $E_Y$.
\end{thm}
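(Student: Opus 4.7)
The plan is to compute Sullivan models for both principal bundles as Koszul--Sullivan (KS) extensions of models for their bases, and then to transfer the given rational homotopy equivalence $X \simeq_\Q Y$ to one $E_X \simeq_\Q E_Y$ by constructing an explicit morphism of these extensions. First, I would fix minimal Sullivan models $(A_X, d_X)$ and $(A_Y, d_Y)$ of $X$ and $Y$, and let $\psi \colon (A_Y, d_Y) \to (A_X, d_X)$ be a quasi-isomorphism (in fact an isomorphism of minimal models) realising the given rational homotopy equivalence, so that the induced map in degree two agrees with $\phi$. Since $BT$ has minimal model $(\Lambda(x_1, \ldots, x_k), 0)$ with $|x_i|=2$, the classifying map $\rho_X$ is determined, up to rational homotopy, by the classes $[\alpha_i^X]:=\rho_X^*(x_i)\in H^2(X;\Q)$; I would pick cocycle representatives $\alpha_i^X\in A_X^2$, and analogously $\alpha_i^Y\in A_Y^2$ with $[\alpha_i^Y]=\rho_Y^*(x_i)$. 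As $\xi_X$ and $\xi_Y$ are rational homotopy principal $T$-bundles, Sullivan models for $E_X$ and $E_Y$ are given by the KS-extensions
\[
M_X=(A_X\otimes\Lambda(y_1,\ldots,y_k),D_X), \qquad M_Y=(A_Y\otimes\Lambda(y_1,\ldots,y_k),D_Y),
\]
with $|y_i|=1$, $D_X|_{A_X}=d_X$, $D_X(y_i)=\alpha_i^X$, and analogously for $M_Y$.

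Writing $\beta(x_i)=\sum_j\beta_{ij}x_j$, commutativity of \eqref{E:DIAG_BUNDLE_RHEQ} translates into the relations $\phi([\alpha_i^Y])=\sum_j\beta_{ij}[\alpha_j^X]$ in $H^2(X;\Q)$, so there exist $b_i\in A_X^1$ with $\psi(\alpha_i^Y)-\sum_j\beta_{ij}\alpha_j^X=d_X(b_i)$. I would then define
\[
\Psi\colon M_Y\to M_X, \qquad \Psi|_{A_Y}=\psi, \quad \Psi(y_i)=\sum_j\beta_{ij}y_j+b_i,
\]
extended as an algebra homomorphism. A routine direct computation, using the defining identity for $b_i$, verifies $\Psi\circ D_Y=D_X\circ\Psi$ on generators, so $\Psi$ is a morphism of CDGAs and thus a candidate Sullivan realisation of a map $E_X\to E_Y$ at the level of models.

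The remaining, and main, step is to verify that $\Psi$ is a quasi-isomorphism. This is where the invertibility of $\phi$, together with the (implicit, and satisfied in the intended application via the torus isomorphism $\mu_K$ of \eqref{E:Tori}) invertibility of $\beta$, comes into play. The plan is to filter both $M_X$ and $M_Y$ by the increasing $V$-word-length filtration $F_p=A\otimes\Lambda^{\leq p}(y_1,\ldots,y_k)$, which $\Psi$ preserves by construction. The induced morphism on the associated graded is $\psi\otimes\Lambda\beta$; since $\psi$ is a quasi-isomorphism and $\beta$ is an isomorphism on the generators (hence extends to an isomorphism of $\Lambda(y_1,\ldots,y_k)$), the map on the associated graded is a quasi-isomorphism. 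A standard spectral sequence comparison argument for KS-extensions (cf.~\cite{FHT}) then yields that $\Psi$ itself is a quasi-isomorphism, whence $E_X\simeq_\Q E_Y$. I expect the main subtlety to be the coherent choice of cocycles $b_i$ and the verification that the KS-extensions may be chosen as claimed, but both points are standard in the Sullivan framework; the substantive content of the proof is really the translation, via the diagram \eqref{E:DIAG_BUNDLE_RHEQ}, from the cohomological hypothesis to a morphism at the level of models.
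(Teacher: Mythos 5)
Your proposal is correct and follows essentially the same route as the paper: model each total space as a relative Sullivan (KS) extension of the base with exterior degree-one fiber generators sent by the differential to the components of $\rho^*$, and extend the base-level equivalence by letting $\beta$ act on the fiber generators; the paper does exactly this with minimal models, which makes your correction terms $b_i$ vanish and lets it conclude with an outright isomorphism of models rather than your filtration/spectral-sequence comparison. Your explicit flagging of the invertibility of $\beta$ is well taken: as literally stated the theorem fails without it (e.g.\ $X=Y=\sph^2$, the Hopf bundle versus the trivial circle bundle, $\phi=\id$, $\beta=0$ makes the diagram commute), and the paper's own proof uses it implicitly when asserting that $\psi$ is an isomorphism of models; in the intended application $\beta=(B\mu_K)^*$ is indeed an isomorphism, so nothing is lost.
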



\begin{proof}
Let $(\wedge V_X, d_X)$ and $(\wedge V_Y,d_Y)$ be the minimal models of $X$ and $Y$ respectively. Let $\varphi:(\wedge V_Y,d_Y)\to (\wedge V_X,d_X)$ be an isomorphism inducing $\phi:H^2(Y;\Q)\to H^2(X;\Q)$. 

The minimal model of $T$ is $(\wedge(t_1,\ldots,t_k),0)$ with $|t_i|=1$.  The minimal model of $BT$ is $\Q[\bar{t}_1,\ldots,\bar{t}_k]$, where $|\bar{t}_i|=2$. The  $t_i$ are mapped to $\bar{t}_i$ via the isomorphism
\[
\delta : W = \Hom(\pi_1(T),\Q) \to \ol W = \Hom(\pi_2(BT), \Q)
\] 
induced by the long exact homotopy sequence of the  fibration $T\to ET\to BT$.  It's clear that $H^2(BT;\Q)$ can now be identified with the vector space $\ol{W} = \Span_\Q\{\ol{t}_1,\ldots,\ol{t}_k\}$.  Using $\delta$, the map $\beta:\ol{W}\to\ol{W}$ induces a map 
\[
\check\beta = \delta^{-1} \circ \beta \circ \delta :W\to W.
\]

 A model for $E_X$ is $(\wedge V_X\otimes\wedge(t_1,\ldots,t_k),D_X)$, where $D_X |_{\wedge V_X}=d_X$ and $D_X|_{W}=\rho^*_X\circ \delta$. Similarly, a model for $E_Y$ is $(\wedge V_Y\otimes\wedge(t_1,\ldots,t_k),D_Y)$, where $D_Y |_{\wedge V_Y}=d_Y$ and $D_Y|_{W}=\rho^*_Y\circ \delta$. Define now an isomorphism
 \[
 \psi:(\wedge V_Y\otimes\wedge(t_1,\ldots,t_k),D_Y)\longrightarrow (\wedge V_X\otimes\wedge(t_1,\ldots,t_k),D_X)
\]
by letting $\psi|_{\wedge V_Y}=\varphi$ and $\psi(1\otimes t_i)=1\otimes \check\beta(t_i)$. It is clear that $\psi$ preserves the grading and $(\psi\circ D_Y)|_{\wedge V_Y}=(D_X\circ\psi))|_{\wedge V_Y}$. Moreover, using the commutativity of diagram~\eqref{E:DIAG_BUNDLE_RHEQ} (and Hurewicz),
\begin{align*}
\psi\circ D_Y |_{W} & = \psi\circ \rho^*_Y\circ \delta \\
			  &= \vphi \circ \rho^*_Y\circ \delta \\
			  &= \rho_X^* \circ \beta \circ \delta \\
			  &=\rho^*_X\circ \delta \circ \check\beta\\
			  & = D_X|_{W}\circ \check\beta \\
			  & = D_X\circ\psi|_{W}.
\end{align*}
Then $\psi$ is an isomorphism between the models of $E_X$ and $E_Y$. Consequently, there is an isomorphism between the corresponding minimal models and $E_X \simeq_\Q E_Y$. 
\end{proof}

It is now apparent that, in order to show that $M$ and $\hat{M}$ are rationally homotopy equivalent, it suffices to show that the hypotheses of Theorem \ref{T:BUNDLE_RHEQ} are satisfied by the rational homotopy principal $K_M$-bundles $M\to \orb$ and $\hat{M}\to \hat{\orb}$.



\begin{prop}
\label{P:COMM_DIAG}
The  diagram
\begin{equation}
\label{E:CD_BUNDLES}
\xymatrix{
H^2(\orb;\QQ)& H^2(\hat{\orb};\QQ) \ar[l]_{f}\\
H^2(BK_M;\QQ) \ar[u]^{} & H^2(BK_{\hat{M}};\QQ) \ar[u]^{}\ar[l]_{(B\mu_K)^*}
}
\end{equation}
is commutative, where the vertical arrows are induced by the bundles $M\to \orb$ and $\hat{M}\to \hat{\orb}$, and $f$ is the isomorphism given by $\orb \simeq_\Q \horb$.
\end{prop}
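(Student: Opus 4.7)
My plan is to reduce the commutativity of diagram~\eqref{E:CD_BUNDLES} to an equivariant compatibility that follows from the GKM presentation of $H^*_{T_\orb}$ together with the defining properties of the lifted label map $\lambda_M$ and the isomorphism $\mu_M$.

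\textbf{Step 1 (equivariant formula for the classifying pullbacks).} Since $K_M\hookrightarrow T_M$ induces a surjection $H^2(BT_M;\Q)\twoheadrightarrow H^2(BK_M;\Q)$, any $\beta\in H^2(BK_M;\Q)$ admits a lift $\tilde\beta\in H^2(BT_M;\Q)$. I claim $\rho_\orb^*(\beta)$ equals the image of $\tilde\beta$ under
\[
H^2(BT_M;\Q)\to H^2_{T_M}(M;\Q)=H^2_{T_\orb}(\orb;\Q)\to H^2(\orb;\Q),
\]
where the first map is the structural homomorphism, the middle identification uses that $K_M$ acts almost freely on $M$, and the last is the natural projection. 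Both descriptions compute the pullback along $\orb\xrightarrow{\rho_\orb}BK_M\hookrightarrow BT_M$, and the choice of lift is immaterial because $H^2(BT_\orb;\Q)\subset H^2(BT_M;\Q)$ is killed by the projection to $H^2(\orb;\Q)$. The analogous formula holds for $\rho_{\hat\orb}^*$.

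\textbf{Step 2 (reduction).} By Remark~\ref{R:functorial}, the equivariant isomorphism $\vphi_\#\colon H^*_{T_\orb}(\orb;\Q)\to H^*_{T_\orb}(\hat\orb;\Q)$ descends non-equivariantly to $f^{-1}$. In view of Step~1, the commutativity of~\eqref{E:CD_BUNDLES} therefore reduces to the commutativity of
\[
\xymatrix{
H^2(BT_{\hat M};\Q)\ar[r] \ar[d]_{(B\mu_M)^*} & H^2_{T_\orb}(\hat\orb;\Q) \ar[d]^{\vphi_\#^{-1}} \\
H^2(BT_M;\Q)\ar[r] & H^2_{T_\orb}(\orb;\Q)
}
\]
with the horizontal arrows the structural homomorphisms.

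\textbf{Step 3 (GKM verification).} I would check this equivariant diagram by restricting to $T_\orb$-fixed points. By Theorem~\ref{T:EqCoh}, both sides inject into $\bigoplus_v H^2(BT_\orb;\Q)$ indexed by the combinatorially identified vertex sets of $Q$ and $\hat Q$, and under these embeddings $\vphi_\#$ is the identity. For a vertex $v\in\hat Q$ corresponding to a minimal $T_M$-orbit through $p\in M$, the restriction-to-$v$ maps factor as
\[
H^2(BT_{\hat M};\Q)\to H^2(BT(v);\Q)\xleftarrow[\cong]{(B\lambda|_{T(v)})^*}H^2(BT_\orb;\Q)
\]
on the $\hat\orb$-side, and through $(T_M)_p^o\hookrightarrow T_M$ and $H^2(B(T_M)_p^o;\Q)\xleftarrow[\cong]{(B\pi|_{(T_M)_p^o})^*}H^2(BT_\orb;\Q)$ on the $\orb$-side. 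By part~(b) of the preceding lemma, $\mu_M$ restricts to an isomorphism $(T_M)_p^o\xrightarrow{\cong}T(v)$ whose inverse is $\lambda_M|_{T(v)}$. Combining diagram~\eqref{E:Tori} with $\hat\pi|_{T(v)}=\lambda|_{T(v)}$, one obtains $\lambda|_{T(v)}\circ\mu_M|_{(T_M)_p^o}=\pi|_{(T_M)_p^o}$, and naturality of the $B(-)^*$ functor delivers the required compatibility.

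\textbf{Main obstacle.} The technical heart is Step~3: carefully tracking the two distinct identifications of the isotropy circles at fixed points with $T_\orb$---through the $T_M$-isotropy covering $\pi|_{(T_M)_p^o}$ on the $\orb$-side and through $\lambda|_{T(v)}$ on the $\hat\orb$-side---and confirming that $\mu_M$ intertwines them. All the required bookkeeping is encoded in the relation $\lambda=\pi\circ\lambda_M$ together with the isomorphism $\lambda_M|_{T(v)}\colon T(v)\xrightarrow{\cong}(T_M)_p^o$ of the preceding lemma.
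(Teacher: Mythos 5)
Your argument is correct in outline, but it follows a genuinely different route from the paper. The paper never lifts the problem to $T_M$-equivariant cohomology at the fixed points; instead it interpolates through the one-skeleta: Lemma~\ref{L:1-skeleta} produces an actual $T_M$-equivariant homeomorphism $\tilde h\colon M^{(1)}\to \hM^{(1)}$ covering $h\colon \orb^{(1)}\to\horb^{(1)}$, so that $M^{(1)}\to\orb^{(1)}$ is the pull-back of $\hM^{(1)}\to\horb^{(1)}$ along $h$ and the classifying maps commute over the one-skeleta; commutativity of \eqref{E:CD_BUNDLES} then follows from the compatibility of $h^*$ with $f$ (the top square of \eqref{E:3-squares-diagram}) together with the injectivity of $i^*\colon H^2(\orb;\QQ)\to H^2(\orb^{(1)};\QQ)$ established in Lemma~\ref{L:INJ_I}. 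You avoid constructing $h$ and $\tilde h$ altogether: using almost-freeness of $K_M$ you rewrite the classifying pull-back through the structure map $H^2(BT_M;\QQ)\to H^2_{T_M}(M;\QQ)\cong H^2_{T_\orb}(\orb;\QQ)$, reduce to an identity of these structure maps under $\vphi_\#$ and $(B\mu_M)^*$, and verify it by restricting to the $T_\orb$-fixed points, where Chang--Skjelbred injectivity (as packaged in Theorem~\ref{T:EqCoh} and Remark~\ref{R:functorial}) applies and the check becomes the group-theoretic bookkeeping $\hat\pi\circ\mu_M=\pi$ from \eqref{E:Tori} together with part (b) of the lemma on $\lambda_M$. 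This is a real trade: the paper needs the one-skeleton precisely because ordinary $H^2$ sees nothing at isolated fixed points, and pays for it with the geometric gluing of cohomogeneity-one pieces and Lemma~\ref{L:INJ_I}; you pay instead by having to identify, under the rational equivalences $M_{T_M}\simeq_\QQ\orb_{T_\orb}$ and $\hM_{T_{\hM}}\simeq_\QQ\horb_{T_\orb}$, the GKM restriction at a vertex with restriction to the corresponding minimal orbit via $B$ of the isotropy covering --- the ``technical heart'' you flag. That identification is standard but should be written out, and two small imprecisions should be fixed when doing so: $\lambda|_{T(v)}$ is defined on $T(v)\In T_Q$, so the identity you want is $\hat\pi|_{\ol{T(v)}}\circ\mu_M|_{(T_M)^o_p}=\pi|_{(T_M)^o_p}$, where $\ol{T(v)}$ is the (isomorphic, by Lemma~\ref{L:FREE_SUBTORUS}) image of $T(v)$ in $T_{\hM}$; and $\mu_M|_{(T_M)_p^o}$ is inverse to $\ul\lambda_M|_{\ol{T(v)}}$ rather than to $\lambda_M|_{T(v)}$ itself. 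With those points made precise, your reduction in Step 2 (including the observation that the ambiguity in the lift lies in $\pi^*H^2(BT_\orb;\QQ)$ and dies in $H^2(\orb;\QQ)$) is sound and yields the proposition.
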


As a first step towards proving Proposition \ref{P:COMM_DIAG}, the following lemma is necessary.


\begin{lem}
\label{L:1-skeleta}
  Let \(M^{(1)}\subset M\), \(\hat{M}^{(1)}\subset \hat{M}\) be the pre-images of the vertex-edge graphs of $Q$ and $\hat{Q}$, respectively.
  Then there is a \(T_M\)-equivariant homeomorphism \(\tilde h : M^{(1)}\rightarrow \hat{M}^{(1)}\).

  Moreover, $\tilde h$ induces a $T_{\orb}$-equivariant homeomorphism $h:\orb^{(1)}\to \hat{\orb}^{(1)}$ whose induced map in cohomology  completes the commutative diagram
\[
\xymatrix{
H^2(\orb;\QQ) \ar[d]_{i^*}  & H^2(\hat{\orb};\QQ) \ar[l]_{f} \ar[d]^{\hat i^*}  \\
H^2(\orb^{(1)};\QQ) & H^2(\horb^{(1)};\QQ) \ar[l]_{h^*}
}
\]
where the vertical maps are induced by the respective inclusions.
\end{lem}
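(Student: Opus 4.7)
The plan is to proceed in three stages: build $\tilde{h}$ explicitly on $M^{(1)}$ using the labelled face-poset data, descend to $h$ on the orbifold quotient, and verify cohomological commutativity via $T_\orb$-equivariant GKM theory.

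For $\tilde{h}$, I would decompose $M^{(1)}$ piecewise using the label map $\lambda_M:T_Q\to T_M$. Each vertex $v$ of $Q$ contributes a minimal $T_M$-orbit with identity component of isotropy equal to $\lambda_M(T(v))$, and each edge $E$ joining vertices $v_1,v_2$, with generic edge-isotropy component $H_E=\lambda_M(T(E)^o)$ and vertex-isotropy components $H_{v_1}\supset H_E\subset H_{v_2}$, yields a canonical $T_M$-invariant submanifold
\[
M_E \;\cong\; \bigl(T_M \times_{H_{v_1}} \mathbb{C}\bigr) \,\cup_{T_M/H_E \times (0,1)}\, \bigl(T_M \times_{H_{v_2}} \mathbb{C}\bigr),
\]
where $H_{v_i}$ acts on $\mathbb{C}$ through the edge character $H_{v_i}\to H_{v_i}/H_E\cong S^1$. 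Since $\hat{\lambda}=\varphi_{*}\lambda$ in the proof of Theorem~\ref{T:main_thm}, the analogous decomposition of $\hat{M}^{(1)}$ (after identifying $T_{\hat{M}}$ with $T_M$ via $\underline{\lambda}_M$) uses the same groups and the same characters, giving a canonical $T_M$-equivariant homeomorphism $M_E\to\hat{M}_{\varphi(E)}$ for each edge. Consistency at a shared vertex is supplied by Lemma~\ref{L:Standard_rep}: the slice representation at a minimal orbit is the standard $T^s$-action on $\mathbb{C}^s$ on both sides, with the $s$ coordinate axes corresponding to the $s$ incident edges. Choosing these parameterisations coherently, the local homeomorphisms glue to a $T_M$-equivariant $\tilde{h}:M^{(1)}\to\hat{M}^{(1)}$, which descends via $\mu_K$ to the required $T_\orb$-equivariant $h:\orb^{(1)}\to\hat{\orb}^{(1)}$.

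For the cohomological square, I would pass to $T_\orb$-equivariant cohomology. The equivalence $\orb\simeq_{\Q}\hat{\orb}$ furnished by Theorem~\ref{T:main_thm} is $T_\orb$-equivariant, so $f$ lifts to an $H^*(BT_\orb;\Q)$-linear isomorphism $H^*_{T_\orb}(\hat{\orb};\Q)\to H^*_{T_\orb}(\orb;\Q)$; by Remark~\ref{R:functorial} and Theorem~\ref{T:EqCoh}, this lift is realised under the GKM embeddings into $\bigoplus H^*(BT_\orb;\Q)$ as the identification of fixed-point summands induced by $\varphi$. The equivariant pullback of $h$ acts identically on the corresponding embeddings of $H^*_{T_\orb}(\orb^{(1)};\Q)$ and $H^*_{T_\orb}(\hat{\orb}^{(1)};\Q)$, since $h$ is built directly from $\varphi$. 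Because the restrictions from the full orbifold to the 1-skeleton are injective on equivariant cohomology (as recalled before Theorem~\ref{T:EqCoh}), the equivariant square commutes, and quotienting by the ideal $H^{>0}(BT_\orb;\Q)$ yields the stated diagram.

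The anticipated main obstacle is the vertex book-keeping in Stage~1: verifying that the $s$ coordinate axes of the slice representation at each minimal orbit are consistently matched in $M$ and $\hat{M}$ with the $s$ incident edges. This ultimately reduces to the identity of the combinatorial labelling data on $M$ and $\hat{M}$ under $\varphi$, which holds by construction of $\hat{\lambda}$ in the proof of Theorem~\ref{T:main_thm}.
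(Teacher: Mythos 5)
Your first stage (building $\tilde h$ edge-by-edge from the matching isotropy data and descending to $h$) is essentially the paper's argument: the paper also decomposes $M^{(1)}$ and $\hat M^{(1)}$ into cohomogeneity-one pieces over the edges, matches them using equality of isotropy groups coming from $\hat\lambda=\varphi_*\lambda$, and glues using that $T_M$ is a compact connected abelian group. Your second stage also follows the paper's general strategy (pass to $T_\orb$-equivariant cohomology, use the GKM/fixed-point description, then descend), but the descent contains a genuine gap.

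Concretely, two steps would fail as written. First, you treat $H^*_{T_\orb}(\orb^{(1)};\QQ)$ and $H^*_{T_\orb}(\horb^{(1)};\QQ)$ as embedded in $\bigoplus_{\text{fixed pts}}H^*(BT_\orb;\QQ)$ and conclude commutativity of the equivariant square from agreement of the two maps on fixed-point data; this requires injectivity of the restriction $H^*_{T_\orb}(\orb^{(1)};\QQ)\to H^*_{T_\orb}(\orb^G;\QQ)$ \emph{from the one-skeleton}, which is not the injectivity recalled before Theorem~\ref{T:EqCoh} (that statement concerns restriction from $\orb$, where $H^{\odd}=0$). Since the GKM graph generally has cycles, $\orb^{(1)}$ is homotopy equivalent to a wedge of two-spheres and circles, so $H^1(\orb^{(1)};\QQ)\neq 0$ and the restriction from the one-skeleton to the fixed points is not injective in all degrees; in degree two it does hold, but only after an argument (an inductive Mayer--Vietoris over the spheres), which you neither state nor prove. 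Second, for the same reason the one-skeleta are not equivariantly formal, so ``quotienting by the ideal $H^{>0}(BT_\orb;\QQ)$'' does not recover $H^2(\orb^{(1)};\QQ)$ or $H^2(\horb^{(1)};\QQ)$, and even surjectivity of $H^2_{T_\orb}(\orb^{(1)};\QQ)\to H^2(\orb^{(1)};\QQ)$ is not automatic; that step only applies to $\orb$ and $\horb$. The paper avoids both pitfalls: it uses surjectivity of $H^2_{T_\orb}(\orb;\QQ)\to H^2(\orb;\QQ)$ (valid because $H^1(\orb;\QQ)=0$) to lift classes on the orbifold side, and it verifies the needed compatibility on the one-skeleta by an explicit sphere-by-sphere computation, showing $H^2(\orb^{(1)};\QQ)$ is generated by the duals $\alpha_{ij}$ of the classes $[\sph^2_{ij}]$, computing $H^2_{T_\orb}(\sph^2_{ij})$ by Mayer--Vietoris on the Borel construction, and tracking that the class corresponding to $(\mu(e_{ij}),0)$ maps to $\alpha_{ij}$ while $h^*(\hat\alpha_{ij})=\alpha_{ij}$. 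Your outline becomes correct once you replace the ``embed and quotient'' step by this kind of degree-two argument (or some equivalent justification of the degree-two injectivity and of the descent via a diagram chase using the surjectivity on the orbifold corner).
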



\begin{proof}
  \(M^{(1)}\) and \(\hat{M}^{(1)}\) are unions of cohomogeneity-one manifolds \(N_{ij}\) and \(\hat{N}_{ij}\), respectively.
  Each of these cohomogeneity-one manifolds is the pre-image of an edge in \(Q\) or \(\hat{Q}\), respectively.
Moreover, the indices run over the edges $e_{ij}$ of the vertex-edge graph of $Q$. 

  Since the isotropy subgroups of the \(T_M\)-action on each \(N_{ij}\) and \(\hat{N}_{ij}\) are the same, there are equivariant homeomorphisms \(N_{ij}\rightarrow \hat{N}_{ij}\).
  Since \(T_M\) is a compact, connected, abelian Lie group, these homeomorphisms can be chosen in such a way that they extend to an equivariant homeomorphism \(\tilde h: M^{(1)}\rightarrow \hat{M}^{(1)}\).

  Because \(\orb^{(1)}=M^{(1)}/K_M\) and \(\hat{\orb}^{(1)}=\hat{M}^{(1)}/K_M\), it follows that there is an \(T_\orb\)-equivariant homeomorphism \(h : \orb^{(1)}\rightarrow \hat{\orb}^{(1)}\).

It remains to show that the induced map in cohomology  completes a commutative diagram as in the statement of the lemma. Consider the diagram below, where the back (by equivariant rational homotopy equivalence), base and sides are each commutative.  The goal is to show that the dotted arrow in the diagram below makes the top of the cube into a commutative diagram.
\[
\xymatrix{
H^2(\orb;\QQ) \ar[drr]_(0.3){i^*}  & H^2(\hat{\orb};\QQ) \ar[l]_{f} \ar[drr]^{\hat i^*}  & & \\
& & H^2(\orb^{(1)};\QQ) & H^2(\horb^{(1)};\QQ) \ar@{.>}[l]^{h^*}\\
H^2_{T_\orb}(\orb;\QQ) \ar[drr]_{i^*} \ar[uu] & H^2_{T_\orb}(\hat{\orb};\QQ) \ar[l] \ar[drr]^(0.3){\hat i^*} |!{[r];[dr]}\hole  \ar[uu] |!{[uul];[ur]}\hole& & \\
& & H^2_{T_\orb}(\orb^{(1)};\QQ) \ar[uu] & H^2_{T_\orb}(\horb^{(1)};\QQ) \ar[l] \ar[uu]\\
}
\]
Here the bottom maps are induced by functoriality from the isomorphism of face posets $\varphi:\mathcal{P}(Q)\to \mathcal{P}(\hat{Q})$, see Remark \ref{R:functorial}.

By a diagram chase, one readily sees that it suffices to show both that the vertical map $H^2_{T_\orb}(\orb; \QQ) \to H^2(\orb; \QQ)$ is surjective and that the front of the cube is commutative.

Since $H^1(\orb,\Q)=0$ the natural map $H^2_{T_\orb}(\orb; \QQ) \to
H^2(\orb; \QQ)$ is surjective.

By using an inductive Mayer-Vietoris sequence argument, one sees that $H^2(\orb^{(1)};\QQ)$ is generated by the duals $\alpha_{ij}$ of the fundamental classes $[\sph^2_{ij}]\in H_2(\orb^{(1)};\QQ)$. Similarly, $H^{2}(\hat{\orb}^{(1)};\QQ)$ is generated by the duals $\hat{\alpha}_{ij}$ of the classes $[\hat{\sph}^2_{ij}]\in H_{2}(\hat{\orb}^{(1)};\QQ)$, and $h^*(\hat{\alpha}_{ij})=\alpha_{ij}$.
	
On the other hand, by using the Mayer-Vietoris sequence on the Borel construction $(\sph^2_{ij})_T$ (using the decomposition $(\sph^2_{ij})_T = U \cup V$, with $U=(\sph^2_{ij}\setminus p_i)_T$ and $V=(\sph^2_{ij}\setminus p_j)_T$)
\[
H^2_T(\sph^2_{ij};\ZZ)=\{(g_i,g_j)\in H^2_T(p_i;\ZZ)\oplus H^2_T(p_j;\ZZ)\mid g_i-g_j\in  \ZZ\cdot \mu(e_{ij})\}
\]
where $\mu(e_{ij})$ is the weight of $e_{ij}$ in the GKM-graph associated to $\orb$. From the Serre spectral sequence of the fibration $\sph^2_{ij}\to (\sph^2_{ij})_T\to BT$, which degenerates in the $E_2$-page, it follows that the map $H^2_T(\sph^2_{ij};\ZZ)\to H^2(\sph^2_{ij};\ZZ)$ is surjective, and sends $(\mu(e_{ij}),0)$ to $\alpha_{ij}$. The same discussion carries over identically to the spheres $\hat{\sph}^2_{ij}$. The map $h^*:H^2(\hat{\orb}^{(1)}; \QQ)\to H^2(\orb^{(1)}; \QQ)$ can now be factored as
\begin{align*}
H^2(\hat{\orb}^{(1)};\QQ)&\lra H^2_T(\hat{\orb}^{(1)};\QQ)  &\lra && H^2_T(\orb^{(1)};\QQ)&\lra H^2(\orb^{(1)};\QQ)\\
\hat{\alpha}_{ij}&\lmt (\hat{\mu}(\hat{e}_{ij}),0) &\lmt && (\mu(e_{ij}),0)&\lmt \alpha_{ij}
\end{align*}
and, therefore, there is a commutative diagram, as desired.
\end{proof}


\begin{lem}
\label{L:INJ_I}
The inclusion map $i:\orb^{(1)}\to \orb$ induces an injection $i^*:H^2(\orb;\Q)\to H^2(\orb^{(1)};\Q)$.
\end{lem}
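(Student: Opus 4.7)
The plan is to perform a diagram chase in the commutative square
\[
\xymatrix{
H^2_G(\orb;\Q) \ar[r]^{i_G^*} \ar[d]_{r_{\orb}} & H^2_G(\orb^{(1)};\Q) \ar[d]^{r_{\orb^{(1)}}} \\
H^2(\orb;\Q) \ar[r]^{i^*} & H^2(\orb^{(1)};\Q),
}
\]
where the vertical arrows are the forgetful maps induced by the inclusion of a fiber into its respective Borel construction. Three ingredients, all already available above, will be combined. First, the equivariant restriction $i_G^*$ is injective, as observed in the paragraph preceding Theorem~\ref{T:EqCoh}. Second, since $H^{\odd}(\orb;\Q)=0$ by Lemma~\ref{L:fixed points isolated}, the Serre spectral sequence of $\orb\to\orb_G\to BG$ collapses at $E_2$, so $r_{\orb}$ is surjective with kernel $\mathrm{im}\,p_{\orb}^{*}$, where $p_{\orb}^{*}\colon H^2(BG;\Q)\to H^2_G(\orb;\Q)$ is the edge homomorphism. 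Third, for the Serre spectral sequence of $\orb^{(1)}\to(\orb^{(1)})_G\to BG$, the vanishing $H^{\odd}(BG;\Q)=0$ forces $E_2^{1,1}=0$, and the edge-map description gives $\ker r_{\orb^{(1)}}=\mathrm{im}\,p_{\orb^{(1)}}^{*}$ in degree $2$ as well. Crucially, this last point does \emph{not} require equivariant formality of $\orb^{(1)}$; indeed $H^1(\orb^{(1)};\Q)$ may well be non-zero, coming from cycles in the vertex-edge graph of $Q$.

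With these ingredients, the chase is immediate. Take $\alpha\in H^2(\orb;\Q)$ with $i^*(\alpha)=0$ and, using surjectivity of $r_{\orb}$, lift it to $\tilde\alpha\in H^2_G(\orb;\Q)$. Then $r_{\orb^{(1)}}(i_G^*(\tilde\alpha))=i^*(r_{\orb}(\tilde\alpha))=i^*(\alpha)=0$, so $i_G^*(\tilde\alpha)=p_{\orb^{(1)}}^{*}(\beta)$ for some $\beta\in H^2(BG;\Q)$. By naturality of the classifying map of the Borel fibration (the map $i_G\colon(\orb^{(1)})_G\to\orb_G$ covers the identity on $BG$), $p_{\orb^{(1)}}^{*}(\beta)=i_G^*(p_{\orb}^{*}(\beta))$, and hence $i_G^*(\tilde\alpha-p_{\orb}^{*}(\beta))=0$. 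Injectivity of $i_G^*$ forces $\tilde\alpha=p_{\orb}^{*}(\beta)\in\ker r_{\orb}$, and therefore $\alpha=r_{\orb}(\tilde\alpha)=0$, as required.

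The only genuinely non-formal point is the identification of $\ker r_{\orb^{(1)}}$ in degree $2$ with $\mathrm{im}\,p_{\orb^{(1)}}^{*}$, which is where the vanishing of $H^1(BG;\Q)$ is essential (it rules out a possible $E_\infty^{1,1}$ contribution in the associated graded of $H^2_G(\orb^{(1)};\Q)$). Since $G$ is a torus this holds automatically, so the main obstacle dissolves and the argument is robust.
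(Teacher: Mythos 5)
Your proof is correct and follows essentially the same route as the paper: a low-degree analysis of the Serre spectral sequences of the Borel fibrations over $BT_\orb$ combined with the Chang--Skjelbred injectivity of the equivariant restriction $H^2_{T_\orb}(\orb;\Q)\to H^2_{T_\orb}(\orb^{(1)};\Q)$, followed by the same diagram chase. The only (cosmetic) difference is that the paper packages the spectral-sequence input as short exact sequences $0\to H^2(BT_\orb;\Q)\to H^2(X_{T_\orb};\Q)\to H^2(X;\Q)$ using the fixed points to identify $E_\infty^{2,0}$, whereas you only use $\ker(\text{restriction})=\mathrm{im}\,p^*$, which suffices.
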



\begin{proof}
Recall that there is a map of fibrations
\begin{equation}
\xymatrix{
\orb^{(1)}\ar[d]_{i} \ar[r]^{}& \orb^{(1)}_{T_\orb} \ar[r]^{} \ar[d]_{i^*} & BT_\orb \ar@{=}[d]^{}\\
{\orb} \ar[r]^{} & {\orb}_{T_\orb} \ar[r]^{}& BT_\orb 
}
\end{equation}
which induces a map between the corresponding Serre spectral sequences with rational coefficients. Both spectral sequences have the property that $E_2^{1,j}=E_2^{3,j}=0$ for all $j\geq 0$.  Therefore, for $X = \orb^{(1)}$ or $X=\orb$ there are exact sequences
\begin{equation*}
  0\rightarrow E_{\infty}^{2,0}\rightarrow H^2(X_{T_\orb};\QQ)\rightarrow E_\infty^{0,2}\rightarrow 0,
\end{equation*}
The natural map \(H^2(BT_\orb;\QQ)\rightarrow H^2(X_{T_\orb};\QQ)\) is injective, since there are \(T_\orb\)-fixed points in \(X\). It then follows that \(E_\infty^{2,0}=H^2(BT_\orb;\QQ)\).
Moreover, \(E_\infty^{0,2}\In H^2(X;\QQ)\), with equality holding if \(b_1(X)=0\). This last condition holds if \(X=\orb\)

The map $i^*$, in particular, induces a row-exact, commutative diagram
\[
\xymatrix{
 0 \ar[r] 	&  H^2(BT_\orb;\QQ) \ar[r] 		& H^2(\orb^{(1)}_{T_\orb};\QQ) \ar[r] 		& H^2(\orb^{(1)};\Q)&\\
 0 \ar[r]				&  H^2(BT_\orb;\QQ) \ar[r] \ar@{=}[u]	& H^2(\orb_{T_\orb};\QQ) \ar[r]\ar@^{(->}[u]^{(i_{T_\orb})^*} 	& H^2(\orb;\Q) \ar[r] \ar[u]^{i^*}&0
}
\]
By \cite[Proof of Prop.\ 2.4]{ChSk}, $(i_{T_\orb})^*$ is injective and, by diagram chasing it follows that  $i^*:H^2(\orb;\QQ)\to H^2(\orb^{(1)};\QQ)$ is injective as well.
\end{proof}


\begin{proof}[Proof of Proposition~\ref{P:COMM_DIAG}] Diagram~\eqref{E:CD_BUNDLES} is part of the larger diagram

\begin{equation}
\label{E:3-squares-diagram}
\xymatrix{
H^2(\orb^{(1)};\QQ)& H^2(\hat{\orb}^{(1)};\QQ) \ar[l]_{h^*}\\
H^2(\orb;\QQ) \ar[u]^{i^*}& H^2(\hat{\orb};\QQ) \ar[u]^{\hat{i}^*} \ar[l]_{f}\\
H^2(BK_M;\QQ) \ar[u]^{} & H^2(BK_{\hat{M}};\QQ) \ar[u]^{}\ar[l]_{(B\mu_K)^*}
},
\end{equation}
where, $i:\orb^{(1)}\to \orb$, $\hat{i}:\hat{\orb}^{(1)}\to \hat{\orb}$ denote the inclusions. The upper square comes from Lemma~\ref{L:1-skeleta} and hence commutes. By Lemma~\ref{L:INJ_I}, the map $i^*$ is injective.  In order to prove the proposition it now suffices to show that the outer square commutes.

By Lemma~\ref{L:1-skeleta}, the map $h:\orb^{(1)}\to \hat{\orb}^{(1)}$ can be lifted to a $\mu_K$-equivariant map $\tilde{h}:M^{(1)}\to \hat{M}^{(1)}$ 
such that the diagram 
\begin{equation*}
\xymatrix{
K_M 	    \ar[d]	 \ar[r]^{\mu_K}	&  	K_{\hM} 	\ar[d]\\
M^{(1)} \ar[d]	 \ar[r]^{\tilde{h}}		&	 \hM^{(1)}	\ar[d]\\
\orb^{(1)} 		\ar[r]^{h}	&	 \horb^{(1)}}
\end{equation*}
is a pull-back diagram between the (rational homotopy) principal torus bundles  $M^{(1)}\to \orb^{(1)}$ and  $ \hat{M}^{(1)}\to \hat{\orb}^{(1)}$. This induces a (rational homotopy) commutative diagram 
\begin{equation*}
\xymatrix{
\orb^{(1)} \ar[d]	 \ar[r]^{h}	&	\horb^{(1)}	\ar[d]\\
BK_M  		\ar[r]^{B\mu_K}			&	BK_{\hM}  }
\end{equation*}
from which the commutativity of the outer square in diagram \eqref{E:3-squares-diagram} follows. 
\end{proof}


\begin{thm} The manifolds $M$ and $\hM$ are $T_M$-equivariantly rationally homotopy equivalent.
\end{thm}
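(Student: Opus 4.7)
My plan is to establish the equivariance in two steps: first produce a non-equivariant rational homotopy equivalence $M\simeq_\Q\hM$ via Theorem~\ref{T:BUNDLE_RHEQ}, and then upgrade to $T_M$-equivariance using the $T_\orb$-equivariance of $\orb\simeq_\Q\horb$ provided by Theorem~\ref{T:main_thm}.

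For the non-equivariant step, I would apply Theorem~\ref{T:BUNDLE_RHEQ} to the rational homotopy principal $K_M$-bundles $M\to\orb$ and $\hM\to\horb$: take $\phi$ to be the $H^2$-isomorphism induced by $\orb\simeq_\Q\horb$ (from Theorem~\ref{T:main_thm}), identify $K_M$ with $K_{\hM}$ via $\mu_K$, and let $\beta=(B\mu_K)^*$. The commutative diagram~\eqref{E:DIAG_BUNDLE_RHEQ} required by Theorem~\ref{T:BUNDLE_RHEQ} is precisely the content of Proposition~\ref{P:COMM_DIAG}, so the theorem immediately delivers $M\simeq_\Q\hM$.

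For the equivariant upgrade, I would exploit the fact that $K_M\subset T_M$ acts almost freely on $M$ with quotient $\orb$ (on which $T_\orb=T_M/K_M$ acts). Iterated Borel construction then gives
\[
M_{T_M}\;\simeq_\Q\;(M_{K_M})_{T_\orb}\;\simeq_\Q\;\orb_{T_\orb},
\]
the second equivalence coming from $M_{K_M}\simeq_\Q\orb$ as $T_\orb$-spaces (since $K_M$ acts almost freely). An identical argument yields $\hM_{T_{\hM}}\simeq_\Q\horb_{T_\orb}$. Since $\orb\simeq_\Q\horb$ is $T_\orb$-equivariant, $\orb_{T_\orb}\simeq_\Q\horb_{T_\orb}$ by definition, and combining with the identification $T_M\cong T_{\hM}$ via $\mu_M$ from~\eqref{E:Tori} yields $M_{T_M}\simeq_\Q\hM_{T_{\hM}}$.

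Finally, to verify the commutative square of cohomologies in the definition of equivariant rational homotopy equivalence, I would ensure that both rational homotopy equivalences arise from a single coherent Sullivan-model isomorphism by running the argument of Theorem~\ref{T:BUNDLE_RHEQ} at the equivariant level. Concretely: work over the base ring $H^*(BT_\orb;\Q)$ instead of $\Q$, taking $\varphi$ to be the model isomorphism underlying the $T_\orb$-equivariant equivalence $\orb_{T_\orb}\simeq_\Q\horb_{T_\orb}$, and extend by $\check\beta$ on the $K_M$-bundle fiber generators as in the proof of Theorem~\ref{T:BUNDLE_RHEQ}. The resulting parametrised model isomorphism simultaneously produces compatible equivalences for $M$, $M_{T_M}$, $\hM$, $\hM_{T_{\hM}}$ that fit into the required commutative square by construction. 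The main obstacle is justifying this equivariant enhancement rigorously --- in particular, checking an equivariant analogue of Proposition~\ref{P:COMM_DIAG} at the level of $H^2_{T_\orb}$ so that the parametrised version of~\eqref{E:DIAG_BUNDLE_RHEQ} holds; once this is established, the proof proceeds exactly as in Theorem~\ref{T:BUNDLE_RHEQ}.
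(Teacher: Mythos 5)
Your first two steps coincide with the paper's argument: the non-equivariant equivalence $M\simeq_\Q\hM$ is obtained exactly as you say, by feeding the isomorphism $f$ from $\orb\simeq_\Q\horb$, the identification $\mu_K:K_M\to K_{\hM}$ and the commutative square of Proposition~\ref{P:COMM_DIAG} into Theorem~\ref{T:BUNDLE_RHEQ}; and the equivalence of Borel constructions follows, as you indicate, from $H^*_{T_M}(M;\Q)\cong H^*_{T_\orb}(\orb;\Q)$ (almost-freeness of $K_M$) together with the $T_\orb$-equivariance of $\orb\simeq_\Q\horb$.

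The gap is in your third step. You leave the required commutative square contingent on an ``equivariant enhancement'' of Theorem~\ref{T:BUNDLE_RHEQ} over $H^*(BT_\orb;\Q)$, whose key input --- an equivariant analogue of Proposition~\ref{P:COMM_DIAG} in $H^2_{T_\orb}$ --- you explicitly do not verify; as written, the equivariance claim is therefore not established. Moreover, this detour is unnecessary: the compatibility you need is already contained in what you have. In the proof of Theorem~\ref{T:BUNDLE_RHEQ} the model isomorphism $\psi$ for the total spaces restricts on $\wedge V_Y$ to the isomorphism $\varphi$ for the bases, so the square relating $H^*(\hM;\Q)\to H^*(M;\Q)$ and $f:H^*(\horb;\Q)\to H^*(\orb;\Q)$ via the bundle projections commutes by construction. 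Stacking this square on top of the square expressing the $T_\orb$-equivariance of $\orb\simeq_\Q\horb$ (which relates $f$ to $H^*_{T_\orb}(\horb;\Q)\to H^*_{T_\orb}(\orb;\Q)$), and then invoking the identifications $H^*_{T_M}(M;\Q)\cong H^*_{T_\orb}(\orb;\Q)$ and $H^*_{T_{\hM}}(\hM;\Q)\cong H^*_{T_\orb}(\horb;\Q)$ --- under which the composite $H^*_{T_\orb}(\orb;\Q)\to H^*(\orb;\Q)\to H^*(M;\Q)$ is the canonical map $H^*_{T_M}(M;\Q)\to H^*(M;\Q)$ --- yields precisely the commutative diagram in the definition of $T_M$-equivariant rational homotopy equivalence, with no parametrised model theory and no equivariant version of Proposition~\ref{P:COMM_DIAG} required. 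This is how the paper concludes.
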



\begin{proof}
Recall that $M\to \orb$ (resp.\ $\hat{M}\to \hat{\orb}$) is a rational principal $K_M$-bundle (resp.\ $K_{\hat{M}}$-bundle), with an isomorphism $\mu_K : K_M  \to K_{\hat M}$.  With respect to the induced identification $(B\mu_K)^* : H^*(BK_{\hat M}; \Q) \to H^*(BK_M; \Q)$, Proposition \ref{P:COMM_DIAG} yields a commutative diagram 
$$
\xymatrix{
H^2(\orb;\QQ)& H^2(\hat{\orb};\QQ) \ar[l]_{f}\\
H^2(BK_M;\QQ) \ar[u] & H^2(BK_M;\QQ) \ar[u] \ar@{=}[l]
}
$$
where $f$ is the isomorphism induced by the rational homotopy equivalence $\orb \simeq_\Q \hat \orb$ and the vertical arrows are induced by the rational principal $K_M$-bundles.  By Theorem \ref{T:BUNDLE_RHEQ}, this implies that the total spaces $M$ and $\hat M$ are rationally homotopy equivalent.

Consider now the diagram
$$
\xymatrix{
H^*(M; \Q) & H^*(\hat M; \Q) \ar[l] \\
H^*(\orb;\Q) \ar[u] & H^*(\hat{\orb};\Q) \ar[u] \ar[l]_{f}\\
H_{T_\orb}^*(\orb;\Q) \ar[u] & H_{T_\orb}^*(\hat \orb;\Q) \ar[u] \ar[l]
}
$$
where the uppermost map is the isomorphism induced by the rational
homotopy equivalence $M \simeq_\Q \hat M$ constructed in Theorem
\ref{T:BUNDLE_RHEQ}.  From that construction, it is clear that the
upper square commutes.  On the other hand, the lower square commutes
because of the equivariance of the rational homotopy equivalence $\orb
\simeq_\Q \hat \orb$.  Since $\orb/T_\orb = M/T_M$ and $\hat
\orb/T_\orb = \hat M / T_{\hat M}$, it follows from the commutativity
of \eqref{E:Tori} that $M$ and $\hat M$ are $T_M$-equivariantly
rationally homotopy equivalent.

Indeed, since \(K_M\) acts almost freely on \(M\) with orbit space
\(\orb\), there is an isomorphism
\begin{equation*}
  H^*_{T_M}(M;\Q)\cong H^*_{T_\orb}(\orb;\Q).
\end{equation*}
and similarly for \(\hat{M}\) and \(\hat{O}\).
Therefore, since the above diagram commutes, there is a commutative diagram
$$
\xymatrix{
H^*(M; \Q) & H^*(\hat M; \Q) \ar[l] \\
H_{T_M}^*(M;\Q) \ar[u] & H_{T_M}^*(\hat{M};\Q) \ar[u] \ar[l]
}
$$
as desired.
\end{proof}

\section{Torus actions in non-negative curvature}
\label{S:Nonneg}

To begin this section, a version of Theorem~\ref{T:main_thm} for non-negatively-curved torus orbifolds of dimension at most six will be established. 


\begin{thm}
\label{T:6mfds}
Let $(\orb,G)$ be a non-negatively-curved and simply-connected torus orbifold of dimension at most six such that \(H^{\text{odd}}(\orb;\QQ)=0\).
Then there is a product $\hP$ of spheres of dimension $\geq 3$, a torus $\hat{L}$ acting linearly and almost freely on $\hP$, a linear action of $G$ on  $\hat{\orb}=\hP/\hat{L}$ and a $G$-equivariant rational homotopy equivalence $\orb\simeq_{\QQ}\hat{\orb}$.
\end{thm}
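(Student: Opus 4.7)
My plan is to follow the template of Theorem \ref{T:main_thm}: it suffices to verify that the face poset of $\orb/G$ satisfies all six combinatorial conditions of Proposition \ref{P:comb-properties}. Once those are in place, the construction of the labelled model $\hat{\orb} = \hat{P}/\hat{L}$, the GKM-based identification of rational cohomology rings, the formality argument, and the resulting $G$-equivariant rational homotopy equivalence all carry over word for word from the proof of Theorem \ref{T:main_thm}. The only step in the proof of Proposition \ref{P:comb-properties} that actually used rational ellipticity was the verification of parts (e) and (f) (via \cite[Cor.\ 3.3.11]{AP} and a bound on $\pi_4$), while (a)--(d) used only $H^\odd(\orb;\Q)=0$, which is part of the current hypothesis. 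Moreover, because $\dim\orb \leq 6$ gives $\dim(\orb/G) \leq 3$, the only instances still needing argument are (e) for 2-dimensional faces and (f) at $d=3$.

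For (e), let $F$ be a 2-dimensional face and let $\overline\Sigma$ be the closure of the corresponding stratum in $\orb$. By Proposition \ref{P:right-codimension} and Lemma \ref{L:Masuda_Panov_orb}, $\overline\Sigma$ is a 4-dimensional torus orbifold with $H^\odd(\overline\Sigma;\Q)=0$; since $\overline\Sigma$ is totally geodesic in $\orb$, it inherits non-negative sectional curvature. The task is thus to bound the number of vertices of $\overline\Sigma/T^2$ by $4$. I would deduce this from an orbifold analogue of Spindeler's analysis of non-negatively-curved torus manifolds \cite{Sp}, exploiting the fact that in cohomogeneity two the orbit space is a non-negatively curved polygon whose vertices correspond to $T^2$-fixed points, whose number is controlled by the soul of $\orb$ together with the standard enumeration of isotropy labels on the boundary edges.

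For (f) at $d=3$, the face in question must be all of $\orb/G$ (since $n \leq 3$), so I need to exclude the possibility that $\orb/G$ is combinatorially equivalent to $[-1,1]^3/\{\pm\id\}$. Here the rational-ellipticity argument from the proof of Proposition \ref{P:comb-properties}(f) is unavailable because the $\pi_4$-bound from \cite[Thm.\ 32.6]{FHT} cannot be invoked directly. Instead, I would argue geometrically: such a combinatorial type would force the existence of three pairwise-intersecting, pairwise-distinct codimension-$2$ strata whose closures are 4-dimensional non-negatively-curved torus suborbifolds of a constrained combinatorial type, and a soul-theoretic argument (extending Spindeler's Double Soul Theorem to the orbifold setting) applied to this configuration would yield a contradiction, for instance by producing incompatible splittings of the soul of the universal cover.

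The principal obstacle is exactly property (f) in the 6-dimensional case: while (e) reduces to understanding 4-dimensional non-negatively-curved torus orbifolds, which is essentially classical and already within reach of Spindeler's methods adapted to orbifolds, ruling out the cube-quotient combinatorial type in dimension 6 requires a genuinely geometric argument that substitutes for rational ellipticity. This is also precisely the step that fails to generalize to arbitrary dimension, which explains why the authors state that the higher-dimensional version of the theorem remains open.
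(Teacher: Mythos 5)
Your overall reduction is exactly the paper's: verify the six properties of Proposition \ref{P:comb-properties} for $\orb/G$, observe that in dimension $\leq 6$ only properties (e) and (f) (the latter only for $d=3$, i.e.\ for $\orb/G$ itself) need any new input, and then rerun the proof of Theorem \ref{T:main_thm} verbatim. The problem is that you never actually supply the argument for the one step that carries all the new content. For property (f) you offer only a hoped-for ``soul-theoretic argument (extending Spindeler's Double Soul Theorem to the orbifold setting)'' yielding ``incompatible splittings of the soul of the universal cover,'' with no statement of the orbifold result you would need, no proof of it, and no indication of how the cube-quotient combinatorics would produce the contradiction; you yourself label this the principal obstacle. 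As written this is a missing idea, not a proof, so the proposal does not establish the theorem. Your treatment of (e) is likewise only asserted to be ``within reach of Spindeler's methods adapted to orbifolds.''

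Moreover, the premise that excluding the face poset of $[-1,1]^3/\{\pm\id\}$ ``requires a genuinely geometric argument that substitutes for rational ellipticity'' is mistaken: the paper's substitute is purely topological and uses no curvature at this step. Since $H^{\odd}(\orb;\Q)=0$, Corollary 3 of \cite{bred_free} makes every face of $\orb/G$ rationally acyclic; the facets are then orientable, $\Q$-acyclic two-dimensional orbifolds, hence topological discs; and if the face poset of $\orb/G$ were that of the cube quotient, gluing these three square facets as in the proof of Lemma 4.4 of \cite{Wi} would identify $\partial(\orb/G)$ with $\RP^2$, contradicting the fact that $\orb/G$ is an orientable three-dimensional orbifold with boundary. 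This is the entire content of the extra lemma in Section \ref{S:Nonneg}. Properties (a)--(e) are handled by the paper as in the rationally elliptic case, with non-negative curvature entering only there; Spindeler's decomposition results from \cite{Sp} play no role in this theorem at all (they are used only in the proof of Theorem \ref{T:Bott}). So besides the gap at (f), your proposal replaces a short topological argument by heavy geometric machinery that is neither available in the literature for orbifolds nor shown by you to give the required contradiction.
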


To prove this theorem, it suffices to show that \(\orb/G\) satisfies all the properties listed  in Proposition~\ref{P:comb-properties}.  All of these, except for Property (f), can be proved as in the rationally-elliptic case.

Note that, as \(\orb/G\) is being viewed as a face of itself of codimension zero, in order to prove Theorem \ref{T:6mfds}, Property (f) needs to be discussed in dimension six.  Since the rational cohomology of \(\orb\) is concentrated in even degrees, it follows that all faces of \(\orb/G\) are acyclic over the rationals \cite[Corollary 3]{bred_free}. Hence, the following lemma implies that Property (f) holds for \(\orb/G\).


\begin{lemma}
  There is no simply-connected, six-dimensional torus orbifold \((\orb,G)\) such that each face of \(\orb/G\) is acyclic over the rationals, each facet of \(\orb/G\) is combinatorially equivalent to a square, and the intersection of any two facets has two components.
\end{lemma}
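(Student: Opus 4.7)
The plan is to show that the combinatorial hypotheses force the face numbers of $Q=\orb/G$ to match those of $[-1,1]^3/\{\pm\id\}$, and then to derive a contradiction from an Euler characteristic count that exploits the rational acyclicity of the faces.

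I would begin by enumerating the faces of $Q$. Let $f_i$ denote the number of $i$-dimensional faces. Since $Q$ is a three-dimensional nice manifold with corners, each edge lies in exactly two facets; combined with the hypothesis that each facet is a combinatorial square (and so has four edges), counting (facet, edge) incidences gives $4f_2=2f_1$. On the other hand, each edge arises as a connected component of the intersection of a unique pair of facets, and every pair of facets has two such components, so $f_1=2\binom{f_2}{2}$. Combining the two identities forces $f_2=3$ and $f_1=6$. Since Proposition~\ref{P:Bullet_point_3} implies that exactly three facets meet at each vertex, counting (facet, vertex) incidences then gives $f_0=4$. These are precisely the face numbers of $[-1,1]^3/\{\pm\id\}$.

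Next, I would compute $\chi(\partial Q)$ in two independent ways. The topological boundary $\partial Q$ is a closed $2$-manifold, stratified by the open facets, open edges, and vertices of $Q$. Each facet is rationally acyclic with boundary a $4$-cycle (whose Euler characteristic is zero), so contributes Euler characteristic $1-0=1$ as an open stratum; summing over strata yields
\[
\chi(\partial Q)=f_0-f_1+f_2=4-6+3=1.
\]
On the other hand, $Q$ itself is rationally acyclic (as a face of itself in codimension zero), so $\chi(Q)=1$; the standard identity $\chi(\partial M)=2\chi(M)$ for any compact $3$-manifold $M$ with boundary then gives $\chi(\partial Q)=2$, contradicting $\chi(\partial Q)=1$. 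No such orbifold can therefore exist.

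The only substantive step is the combinatorial enumeration, which relies on the nice manifold-with-corners structure on $Q$ (each edge lying in exactly two facets) together with Proposition~\ref{P:Bullet_point_3}. After that, the obstruction is simply the classical fact that the boundary of a compact odd-dimensional manifold has even Euler characteristic; indeed, the structure forced on $\partial Q$ by the hypotheses is that of $\partial([-1,1]^3/\{\pm\id\})=\RP^2$, which notoriously fails to bound any compact three-manifold.
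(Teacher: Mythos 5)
Your route is close to the paper's in spirit but differs in both halves. The paper first identifies each facet with a disc (acyclic, orientable $2$-orbifolds are $2$-manifolds), then invokes the argument of Lemma 4.4 of \cite{Wi} to conclude that the boundary of $\orb/G$ is homeomorphic to $\RP^2$, and finally contradicts the orientability of the orbifold-with-boundary $\orb/G$. You instead extract the face numbers $f_2=3$, $f_1=6$, $f_0=4$ directly from the hypotheses and contradict the parity of $\chi(\partial Q)$. Your enumeration is fine and arguably more self-contained than quoting \cite{Wi}: it only needs $\chi(\text{facet})=1$ rather than the disc identification, plus Proposition \ref{P:Bullet_point_3} (available here, since $H^{\odd}(\orb;\QQ)=0$ is assumed in Theorem \ref{T:6mfds}) and the observation--worth a sentence--that each connected component of the intersection of two facets is a single closed edge, which follows from the local model $\RR^{3}_{+}$ at a vertex (locally, two of the three facets through a vertex meet exactly along one edge).

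The step that is not justified as written is the final appeal to ``the standard identity $\chi(\partial M)=2\chi(M)$ for any compact $3$-manifold $M$ with boundary.'' The quotient $Q=\orb/G$ is not known to be a manifold with boundary: a priori it is only a compact, orientable orbifold with boundary and corners (its local models are quotients of $\RR^{3}_{+}$ by finite groups), and you also use silently that the topological boundary of $Q$ is exactly the union of the closed facets, so that your stratified count computes $\chi$ of the right object. Both points can be repaired, but they need to be addressed: since $Q$ is orientable as an orbifold (this is exactly the ingredient the paper uses for its own contradiction), one may either argue that the underlying space of an orientable $3$-orbifold with corners is a topological $3$-manifold with boundary equal to the union of the facets, or, more directly, double $Q$ along its boundary and use rational Poincar\'e duality for the resulting closed, orientable, odd-dimensional $\QQ$-homology manifold to get $\chi(\partial Q)=2\chi(Q)$. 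With that justification added, your argument goes through and yields the same conclusion as the paper's, with the non-orientability of $\RP^2$ replaced by the evenness of the Euler characteristic of a boundary.
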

\begin{proof}
  Assume that there is a torus orbifold whose orbit space contradicts the conclusion of the lemma.  
  First note that all two-dimensional orbifolds are homeomorphic to two-dimensional topological manifolds.
  Therefore, since the facets of \(\orb/G\) are acyclic over the rationals and orientable, they are all homeomorphic to two-dimensional discs.
  Hence, with the same argument as in the proof of Lemma 4.4 of \cite{Wi}, one sees that the boundary of \(\orb/G\) is homeomorphic to \(\mathbb{R} P^2\).  However, \(\orb/G\) is an orientable orbifold with boundary, while \(\mathbb{R} P^2\) is non-orientable, yielding a contradiction, as desired.
\end{proof}


\begin{proof}[Proof of Theorem \ref{T:6mfds}]
Since $H^\odd(\orb;\Q) = 0$ and $\orb$ admits an invariant metric with non-negative sectional curvature, the conclusion of
Proposition~\ref{P:comb-properties} holds for \(\orb/G\) as discussed above.
Therefore the same arguments as in the proof of Theorem~\ref{T:main_thm} can be carried out to prove Theorem \ref{T:6mfds}.
\end{proof}

To conclude the article, a proof of Theorem \ref{T:Bott} is provided, that is, the Bott Conjecture in the presence of an isometric, slice-maximal torus action is verified.  This is a generalisation of Theorem 1.2 of \cite{Sp}.


\begin{proof}[Proof of Theorem \ref{T:Bott}]
Let $T$ denote the torus whose action on $M$ is slice maximal.  It is sufficient to show that $M$ is \emph{rationally $\Omega$-elliptic}, i.e. that the pointed loop space $\Omega M$ of $M$ satisfies $\sum_r \dim(\pi_r (\Omega M) \otimes \Q) < \infty$, since, $M$ being simply connected, this property implies that $M$ is rationally elliptic. The proof will proceed by induction on the dimension $d = \dim(M/T)$ and no longer assumes that $M$ is simply connected.

When $d=0$, $M$ consists of one orbit and is, therefore, a torus, hence rationally $\Omega$-elliptic.  Suppose now that every non-negatively-curved, closed manifold admitting an isometric, slice-maximal torus action with quotient of dimension $d-1$ is rationally $\Omega$-elliptic.  

From the introduction, the action of $T$ on $M$ being slice maximal ensures that, at every point on a (fixed) minimal orbit, the normal slice is even dimensional and the identity component $G$ of the isotropy subgroup acts on it with maximal rank, i.e. the action is equivalent to the standard linear, effective action of $G$ on $\CC^{\dim(G)}$.  Hence, one can find a circle subgroup $S \In G \In T$ such that some component $M'$ of its fixed-point set $M^S$ is of codimension two and contains the minimal orbit.  Consequently, the induced action of $T' = T/S$ on $M'$ is slice-maximal.  Moreover, since $M'$ is totally geodesic, hence non-negatively curved, and $\dim(M'/T') = d-1$,  the induction hypothesis yields that $M'$ is rationally $\Omega$-elliptic.

As the action of $S$ on $M$ is fixed-point homogeneous, by Theorem 4.1 of \cite{Sp} there exists a submanifold $N \In M$ such that $M$ is diffeomorphic to the union of the normal disc bundles $D(M')$ and $D(N)$ of $M'$ and $N$ along their common boundary $E$:
\[
M = D(M') \cup_{E} D(N).
\]
The foot-point projection $D(M')\to M'$ induces an $S^1$-bundle $E\to M'$. Since $M'$ is rationally $\Omega$-elliptic, it follows from the homotopy long exact sequence that $E$ is also rationally $\Omega$-elliptic. Moreover, by Theorem D of \cite{GH}, the homotopy fibre $F$ of the inclusion $\iota: E\hookrightarrow M$ is rationally $\Omega$-elliptic. Therefore, from the homotopy long exact sequence for $\iota$ and the fact that $E$ is rationally $\Omega$-elliptic, it follows that $M$ is rationally $\Omega$-elliptic as well, as desired.
\end{proof}


\bibliographystyle{amsplain}


\end{document}